\newtheoremstyle{numpar}
{6pt}			
{6pt}			
{}			
{}			
{}			
{\ }			
{ }			
{}			
\theoremstyle{numpar}
\newtheorem{num}{\unskip}[subsection]
\theoremstyle{plain}
\newtheorem{theorem}[num]{Theorem}
\newtheorem{lemma}[num]{Lemma}
\newtheorem{prop}[num]{Proposition}
\newtheorem{corollary}[num]{Corollary}
\newtheorem{demo*}[num]{Proof}
\theoremstyle{definition}
\newtheorem{definition}[num]{Definition}
\newtheorem{example}[num]{Example}
\theoremstyle{remark}
\newtheorem{remark}[num]{Remark}
\numberwithin{equation}{subsection}
\DeclareFontFamily{U}{wncy}{}
\DeclareFontShape{U}{wncy}{m}{n}{<->wncyr10}{}
\DeclareSymbolFont{mcy}{U}{wncy}{m}{n}
\DeclareMathSymbol{\Sha}{\mathord}{mcy}{"58}
\newcommand{\ints}{\mathbb{Z}}
\newcommand{\rationals}{\mathbb{Q}}
\newcommand{\reals}{\mathbb{R}}
\newcommand{\complex}{\mathbb{C}}
\newcommand{\Frob}{\operatorname{Frob}}
\newcommand{\Hom}{\operatorname{Hom}}
\newcommand{\Aut}{\operatorname{Aut}}
\newcommand{\End}{\operatorname{End}}
\newcommand{\id}{\operatorname{id}}
\newcommand{\Supp}{\operatorname{Supp}}
\newcommand{\Gal}{\operatorname{Gal}}
\newcommand{\Res}{\operatorname{Res}}
\newcommand{\Spec}{\operatorname{Spec}}
\newcommand{\Proj}{\operatorname{Proj}}
\newcommand{\GL}{\operatorname{GL}}
\newcommand{\SL}{\operatorname{SL}}
\newcommand{\PSL}{\operatorname{PSL}}
\newcommand{\Sp}{\operatorname{Sp}}
\newcommand{\Sym}{\operatorname{Sym}}
\newcommand{\True}{\operatorname{True}}
\newcommand{\False}{\operatorname{False}}
\title{Dwork Motives, Monodromy and Potential Automorphy}
\author{Lambert A'Campo}
\date{\today}
\begin{document}

\begin{abstract}
	In this paper we study certain families of motives, which arise as direct summands of the cohomology of the Dwork family. We computationally find examples of interesting families with the following three properties. Firstly, their geometric monodromy group is Zariski dense in $\SL_n$. Secondly, they realise many different unipotent operators as the monodromy operator at $t = \infty$. Thirdly, all their Hodge numbers are $\leq 1$.
	
	This has consequences for Galois representations. Namely, if a nilpotent operator $N$ appears as the monodromy at $t = \infty$ in one of our families, we can construct potentially automorphic representations with $\ell$-adic monodromy given by $N$ at a fixed prime $p$.
	As another application, we obtain a new proof of some cases of the recent local-global compatibility theorem of Matsumoto.
\end{abstract}
	
	\maketitle
	
	\tableofcontents
	
	\section{Introduction}
	
	Let $d \geq 3$ be an integer and consider the Dwork family of projective hypersurfaces
	\[
		Y_t : x_0^d + x_1^d + \dots + x_{d-1}^d = d t x_0 x_1 \cdots x_{d-1}.
	\]
	In this paper we study certain families of motives $M_t$, which are $\ints[\zeta_d]$-linear direct summands of $H^{d-2}(Y_t, \ints[\frac{1}{d}, \zeta_d])$, where $\zeta_d$ is a primitive $d$th root of unity. Let $N$ be a $n \times n$ nilpotent matrix. For $n \leq 5$ and all possible choices of $N$, we have computationally found a value of $d$ and a direct summand $M_t$ of dimension $n$, such that the following special properties are satisfied (see Proposition \ref{computed_motives_prop} for a precise statement)
	\begin{enumerate}
		\item[(BM)] Big monodromy: The Zariski closure of the geometric monodromy group $G_{geom} \subset \GL(M_{B,t} \otimes \complex)$ contains $\SL_n$, where $M_{B,t}$ is the Betti realisation of $M_t$.
		\item[(R)] Regular Hodge--Tate weights: The Hodge numbers of $M_{dR,t} \otimes_{\iota} \complex$ are $\leq 1$ for every embedding $\iota : \rationals(\zeta_d) \to \complex$, where  $M_{dR,t}$ is the de Rham realisation of $M_t$.
		\item[(UM)] Unipotent monodromy at infinity: The monodromy operator at $t = \infty$ acting on $M_{B,t}$ is conjugate to $\exp(N)$.
	\end{enumerate}
	
	When $N$ has maximal rank, such families are known for all $n$ by the work of Barnet-Lamb \cite{BL_even}, \cite{BL_odd}. For general $N$ it turns out to be (at least computationally) more difficult to find direct summands which simultaneously satisfy (BM), (R) and (UM). We have not succeeded in finding a general recipe.
	However, when a family $M_t$ with these properties exists for a given $N$, we can apply the method of \cite{HSBT} to $M_t$ to prove a potential automorphy theorem with additional control on local monodromy. This theorem and its consequences will be introduced in \S \ref{galois_intro} below.

	\subsection{Families of Motives} 
	Let $0 < n < d$ be an integer. We define a hypergeometric parameter of dimension $n$ modulo $d$ as a tuple 
a tuple $(\alpha_1, \dots, \alpha_n; \beta_1, \dots, \beta_n)$ of elements of  $\ints/d\ints$ such that
\begin{itemize}
	\item $\sum_i \alpha_i - \sum_i \beta_i = \binom{d}{2}$,
	\item $\alpha_i \neq \beta_j$ for $i, j \in \{1, \dots, n\}$,
	\item $\beta_i \neq \beta_j$ for $i \neq j$.
\end{itemize}
Associated with such a parameter, one obtains an $n$-dimensional family of motives 
\[
M(\alpha_1, \dots, \alpha_n ; \beta_1, \dots, \beta_n)_t
\]
which is a $\ints[\zeta_d]$-linear direct summand of the cohomology of the Dwork family of degree $d$. Conversely, any irreducible summand of $H^{d-2}(Y_t, \ints[\frac{1}{d},\zeta_d])$ is of this form. 

\begin{num}
	In \S 2 we explain how to describe properties of the family of motives $M(\alpha_1, \dots, \alpha_n ; \beta_1, \dots, \beta_n)_t$ in terms of simple combinatorics of the tuple $(\alpha_1, \dots, \alpha_n ; \beta_1, \dots, \beta_n)$. For instance, the Jordan normal form of the monodromy operator at $t = \infty$ has block sizes $\# \{ i \in \{1, \dots, n\} : \alpha_i = \alpha \}$ for $\alpha \in \ints/d\ints$. This treats property (UM). We also state explicit combinatorial criteria for properties (BM) and (R).
\end{num}

\begin{num}
	The work of Harris--Shepherd-Barron--Taylor \cite{HSBT} first introduced the idea of using the Dwork family for potential automorphy. They considered $d$ odd, $n = d - 1$ and used the hypergeometric parameter
	\[
	(0, \dots, 0 ; 1, 2, \dots, d - 1).
	\]
	The corresponding family of motives has maximal unipotent monodromy at $t = \infty$ and the geometric monodromy group is Zariski dense in $\Sp_n$. The Hodge numbers are $h^{i,j} = 1$ for $i + j = d - 2$ and $i,j \geq 0$.
\end{num}

\begin{num}
	In later works, Barnet-Lamb \cite{BL_even}, \cite{BL_odd} also considered parameters of the form 
	\[
	(0, \dots, 0 ; \beta_1, \dots, \beta_n)
	\]
	for any degree $d$ and suitable choices of $\beta_i \in \ints/d\ints$. This gives rise to monodromy groups which are dense in $\SL_n$, i.e. satisfy (BM). It turns out that parameters of this form also always satisfy (R).
	Subsequent variants \cite{BLGHT}, \cite{qian_potential} introduced even more parameters of this form to prove their potential automorphy theorems. All of these examples share the property that the monodromy at $t = \infty$ is maximally unipotent, i.e. they do not necessarily satisfy (UM).
\end{num}

\begin{num}
	To realise nilpotent operators with Jordan block sizes $p_1, \dots, p_r$ in property (UM), we consider 
	parameters of the form
	\[
	(\underbrace{0 , \dots, 0}_{p_1}, \underbrace{\gamma_2, \dots, \gamma_2}_{p_2}, \dots, \underbrace{\gamma_r, \dots, \gamma_r}_{p_r} ; \beta_1, \dots, \beta_n).
	\]
	with $\gamma_i, \beta_i \in \ints/d\ints$. Unfortunately, it is quite rare for such a parameter to have regular Hodge--Tate weights (property (R)). Although it is easy to check for any given parameter if (R) is satisfied.
	
	Indeed, Proposition \ref{hodge_numbers_prop} implies that a parameter $(\alpha_i ; \beta_j)$ satisfies (R) if and only if for every $s \in (\ints/d\ints)^\times$, the following $n$ integers are pairwise distinct
	\[
	\sum_{i} [s \beta_j - s \alpha_i] - \sum_{i} [ s \beta_j - s \beta_i] \qquad j = 1, \dots, n,
	\]
	where $[x] \in \ints$ denotes the unique representative of $x \in \ints/d\ints$ satisfying $0 \leq [x] \leq d - 1$. Alternatively, this condition can be phrased in terms of the cyclic order on $\ints/d\ints$ as in \cite[\S 5]{roberts_villegas}: For every $s \in (\ints/d\ints)^\times$, the zigzag procedure for $(s \alpha_i ; s \beta_j)$ should have at most one local maximum and one local minimum.
	
	Since scaling by $s \in (\ints/d\ints)^\times$ does not interact well with the function $[ \cdot ]$, we do not know a good way of generating parameters which satisfy (R).
\end{num}

\begin{num}
	For $n \leq 5$ and any nilpotent operator, we have computationally found parameters satisfying properties (BM), (R), (UM) (see table \ref{special_hypergeos} at the end of this paper). For example for $n = 4$, and 
	\[
		N = \begin{bmatrix}
			0 & 1 & 0 & 0 \\
			0 & 0 & 1 & 0 \\
			0 & 0 & 0 & 0 \\
			0 & 0 & 0 & 0 
		\end{bmatrix}
	\] 
	we found the parameter modulo $18$ given by
	\[
		(0, 0, 0, 3 ; 4, 11, 16, 17).
	\]
	We also computed some examples with $n = 6$. The method of computation is presented in \S 4.
	It would be interesting to improve the brute-force algorithm or to find more conceptual ways to satisfy the combinatorial properties.
\end{num}

\subsection{Galois Representations} \label{galois_intro} 
Applying the method of \cite{HSBT} to our families of motives, we can prove a potential automorphy theorem with additional control on the monodromy operator. To state it we recall a few technical terms. 

\begin{num} 
	For a number field $F$, we let $\mathbb{A}_F$ be the ring of adeles over $F$ and we let $G_F$ denote the absolute Galois group of $F$. Moreover, we let $\mu_d$ be the set of $d$th roots of unity in some algebraic closure of $F$ and we let $\chi_{cyc} : G_{F} \to \ints_\ell^\times$ denote the $\ell$-adic cyclotomic character. For an isomorphism $\iota : \overline{\rationals}_\ell \cong \complex$ and a continuous Galois representation $\rho : G_{F} \to \GL_n(\overline{\rationals}_\ell)$, we refer to section \S \ref{WD_section} for the definition and properties of the Weil--Deligne representations $WD_{\iota}(\rho \rvert_{F_v})$, where $v$ is a prime of $F$. Finally, we shall denote by $\overline{\rho} : G_{F} \to \GL_n(\overline{\mathbb{F}}_\ell)$ the reduction of $\rho$ modulo $\ell$, which is a well-defined semisimple representation.
\end{num}

\begin{num}
	We say that an automorphic representation $\Pi$ of $\GL_n(\mathbb{A}_F)$ is cohomological if there exists an algebraic representation $V$ of $\GL_n/F$ such that the infinitesimal character of $\Pi_\infty$ agrees with that of $V$.
	For an isomorphism $\iota : \overline{\rationals}_\ell \cong \complex$ and a cohomological cuspidal automorphic representation, we let $r_{\iota}(\Pi) : G_F \to \GL_n(\overline{\rationals}_\ell)$ be the Galois representation defined in \cite{harris_lan_taylor_thorne}. Moreover, we will need the notion of $\iota$-ordinarity of $\Pi$ at a prime $v$ of $F$, which is defined in \cite[5.3]{geraghty19}.
\end{num}
	
	\begin{theorem}[ = Theorem \ref{potential_automorphy_thm}] 
	Let $(\alpha_1, \dots, \alpha_n ; \beta_1, \dots, \beta_n)$ be a hypergeometric parameter of dimension $n$ modulo $d$ appearing in table \ref{special_hypergeos}.
	There exists an integer $C > 0$ with the following property. Given the following objects:
\begin{itemize}
	\item a CM field $F$ containing $\mu_d$,
	\item a prime $p$,
	\item a prime $\ell \nmid p C$ such that $\ell \equiv 1 \pmod{d}$, 
	\item a continuous semisimple representation $\overline{\rho} : G_{F} \to \GL_n(\mathbb{F}_\ell)$ such that $\det \overline{\rho} = \overline{\chi}_{cyc}^{-n(n-1)/2}$,
	\item a finite extension $F^{avoid}/F$,
\end{itemize}
there exists a finite totally real extension $F_2/\rationals$, linearly disjoint from $F^{avoid}$ over $\rationals$ such that for $F' = F F_2$, there exists an isomorphism $\iota : \overline{\rationals}_\ell \cong \complex$ and a cuspidal automorphic representation $\Pi$ of $\GL_n(\mathbb{A}_{F'})$ which is $\iota$-ordinary of weight $0$ at all places above $\ell$
such that 
\[
\overline{r_{\iota}(\Pi)} = \overline{\rho}
\]
and for all places $v \mid p$ of $F'$
\[
WD(r_{\iota}(\Pi) \rvert_{G_{F_v'}})^{F-ss}
\]
is generic with monodromy operator having Jordan blocks $J_\alpha$ of size $\#\{i : \alpha_i = \alpha\}$ for $\alpha \in \ints/d\ints$. If $d$ is odd, then there also exists another cuspidal automorphic representation $\Pi_2$ of $\GL_n(\mathbb{A}_{F'})$ with the same properties as $\Pi$ except that the $v$-adic monodromy operator is nilpotent of rank $1$.
	\end{theorem}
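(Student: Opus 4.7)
The plan is to apply the Harris--Shepherd-Barron--Taylor method, as developed in \cite{HSBT, BL_even, BL_odd}, to the family $M(\alpha_i;\beta_j)_t$, with the crucial new input that the specialization $t_0$ will be chosen $p$-adically close to $\infty$, so that property (UM) forces the desired Weil--Deligne monodromy on the Galois side. Writing $\rho_{t,\lambda}$ for the $\lambda$-adic \'etale realization of $M_t$, the goal is to produce a totally real $F_2/\rationals$ linearly disjoint from $F^{avoid}$, set $F' = F F_2$, and find $t_0 \in \mathbb{P}^1(F')$ such that $\overline{\rho_{t_0, \ell}} \cong \overline{\rho}|_{G_{F'}}$ and such that at every $v \mid p$ of $F'$ the Frobenius-semisimplified Weil--Deligne parameter of $\rho_{t_0, p}|_{G_{F'_v}}$ is generic with monodromy of the prescribed Jordan type. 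Automorphy of $\overline{\rho}$ will then follow by a standard lifting argument.

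To produce $t_0$, introduce an auxiliary prime $\ell' \neq \ell, p$ and a residual representation $\overline{\rho}'$ of $G_F$ of automatically automorphic type (for instance induced from an algebraic Hecke character on a CM subfield), with determinant and polarization matching those of $\overline{\rho_{t,\ell'}}$. Let $T \to \mathbb{P}^1 \setminus \Sigma$ (with $\Sigma$ the singular locus of the family) be the finite \'etale cover parametrizing simultaneous trivializations $\overline{\rho_{t,\ell}} \cong \overline{\rho}|_{G_{F'}}$ and $\overline{\rho_{t,\ell'}} \cong \overline{\rho}'|_{G_{F'}}$. Property (BM), combined with Hilbert irreducibility, gives for $\ell, \ell' > C$ that the residual geometric monodromy contains $\SL_n(\mathbb{F}_\ell) \times \SL_n(\mathbb{F}_{\ell'})$, hence $T$ is geometrically connected. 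One then imposes local conditions at each place of $F$: a $p$-adic residue disk around $\infty$ at primes above $p$; residue disks forcing $\iota$-ordinarity of weight $0$ at primes above $\ell$ and $\ell'$; and conditions at infinity making $F_2/\rationals$ totally real and $F'$ CM. A Moret-Bailly global approximation argument as in \cite[\S 3]{HSBT} then provides the desired $F_2$ and $t_0 \in T(F F_2)$. At this $t_0$, $\overline{\rho_{t_0,\ell'}} \cong \overline{\rho}'|_{G_{F'}}$ is automorphic by construction; property (R) supplies regular Hodge--Tate weights and (BM) supplies an adequate residual image, so an ordinary automorphy lifting theorem in the style of \cite{geraghty19} upgrades this to automorphy of $\rho_{t_0, \ell'}$. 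Compatibility of the motivic system from \cite{harris_lan_taylor_thorne} yields automorphy of $\rho_{t_0, \ell}$, and a second automorphy lifting step at $\ell$, using the residual match with $\overline{\rho}|_{G_{F'}}$, produces the desired cuspidal $\Pi$ on $\GL_n(\mathbb{A}_{F'})$. The $p$-adic monodromy of $\Pi$ equals $N$ because, in a residue disk about $\infty$, the $p$-adic monodromy theorem relates the local Galois monodromy to the topological monodromy at $\infty$, which is $\exp(N)$ by (UM); genericness follows from the regularity provided by (R).

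The principal obstacle is the simultaneous compatibility of the local conditions in Moret-Bailly, most delicately that the $p$-adic condition (proximity to $\infty$) is consistent with the residual-match conditions at $\ell$ and $\ell'$: this is guaranteed because $\ell, \ell' \neq p$ and because $T$ is smooth and geometrically connected over an open subset of $\mathbb{P}^1$, but one must still exhibit non-empty sets of local points at each place, which is where the constant $C$ enters. A further delicate point is the precise translation between topological monodromy at $\infty$ and the $p$-adic monodromy operator in the Weil--Deligne parameter, where (UM) and the fact that the family has semistable reduction at $\infty$ are both decisive. For the supplementary claim when $d$ is odd, one reruns the argument with $t_0$ placed $p$-adically near a point of $t^d = 1$, where the Dwork family acquires a rank-$1$ Picard--Lefschetz degeneration; this produces the second automorphic representation $\Pi_2$ with rank-$1$ nilpotent monodromy, the parity assumption being what ensures a compatible polarization is available at this alternative degeneration.
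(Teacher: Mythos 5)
Your high-level strategy matches the paper's: apply a Moret--Bailly/HSBT argument to the connected cover of the Dwork base parametrizing residual trivializations at $\ell$ and an auxiliary prime $\ell'$, impose a $p$-adic condition near $t = \infty$ so that (UM) dictates the monodromy Jordan type, force ordinarity at $\ell, \ell'$ via a small-$t$ condition, and then climb the automorphy ladder with lifting theorems. However, two steps in your sketch are genuinely wrong and would break the proof.

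First, your choice of auxiliary residual representation $\overline{\rho}'$ ``induced from an algebraic Hecke character on a CM subfield'' is unusable. The automorphy lifting theorem you need (in the style of \cite[Thm 6.1.2]{10author}) requires the residual image over $F'(\mu_{\ell'})$ to be enormous and $\overline{\rho}'$ to be decomposed generic and absolutely irreducible. An induction from a character has small, essentially solvable image and fails these hypotheses. It would also be incompatible with the moduli problem: the geometric monodromy of the family $M_{\lambda',t}/\lambda'$ contains $\SL_n(\mathbb{F}_{\ell'})$ by (BM), so the $\ell'$-sheet of the moduli cover has no sections if the target representation is, residually, a sum of characters. The paper instead takes $\overline{\rho}' = \Sym^{n-1}\overline{r}_{E,\ell'}$ for a non-CM elliptic curve $E/\rationals$ and a carefully chosen large $\ell' \equiv 1 \pmod d$, invoking \cite[Cor 7.2.4]{10author} for potential automorphy of the symmetric power and \cite[Prop 4.1]{qian_potential} for the required image and ordinarity conditions. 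You also omit the determinant-normalizing twist (property (D), giving $\psi_\lambda$), without which $\det \overline{\rho}$ and $\det M_{\lambda,t}/\lambda$ cannot match.

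Second, your claim that ``genericness follows from the regularity provided by (R)'' is a genuine gap. Property (R) concerns the $\ell$-adic Hodge--Tate weights of $M_{\lambda,t}$, which is a statement at places above $\ell$; genericity of $WD(r_{\iota}(\Pi)\rvert_{G_{F'_v}})^{F\text{-}ss}$ for $v \mid p \neq \ell$ in the sense of Definition \ref{generic_def} is a condition on how $N$ interacts with the Frobenius eigenvalues, and is entirely unrelated to (R). The paper obtains genericity through purity: Scholze's proof of the weight-monodromy conjecture for hypersurfaces (Lemma \ref{purity_lemma}) shows $WD(M_{\lambda,t}\rvert_{G_{F'_v}})$ is pure, and Lemma \ref{pure_implies_generic_lemma} shows pure implies generic after Frobenius semisimplification. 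The introduction flags this as the major new ingredient over earlier Dwork-family arguments, so omitting it is not a cosmetic slip. Finally, a smaller point: the reason the $\Pi_2$ construction requires $d$ odd is not a polarization issue but that by Lemma \ref{transvection_lemma} the local monodromy around $t = 1$ has determinant $(-1)^{d-1}$, and for $d$ even it does not produce a nontrivial unipotent in $\SL_n$ (see Prop \ref{unipotent_monodromy_prop}).
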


	\begin{num}
		This theorem can be seen as a refinement of the main theorem of \cite{qian_potential}. Namely, we prove potential automorphy together with a specified choice of local monodromy. The proof of the theorem closely follows the methods of \cite{HSBT}. However, one major new ingredient is a theorem of Scholze \cite{scholze_perfectoid}, which proves the weight-monodromy conjecture for hypersurfaces. We use it to obtain the genericity in the conclusion of the theorem.
		
		We require stronger assumptions on $\ell$ and the coefficient field of $\overline{\rho}$ as in \cite{qian_potential}, but these could be removed if we had more examples of motives satsifying the hypotheses of proposition \ref{big_monodromy_finite_prop} with sufficiently small $U$. Our assumption on $\ell$ also allows us to easily prove the ordinarity of the Galois representations corresponding to our families of motives. (The proof of ordinarity from \cite{qian_potential} does not apply in our setting.)
	\end{num}
	
	\begin{num}
		As a consequence of theorem \ref{potential_automorphy_thm}, we prove the existence of non conjugate self-dual automorphic representations with local monodromy operator $N$ (see corollary \ref{automorphic_existence_cor}). 
		We do not know how to construct such representations by purely automorphic methods. For groups which have discrete series, there are such methods \cite[Thm 5.7]{shin12}, but this does not apply to non conjugate self-dual automorphic representations of $\GL_n$ for $n \geq 3$.
		
		As another application, we generalise the method introduced by Allen--Newton \cite{allen_newton} to prove the following local-global compatibility theorem.
	\end{num}
	
	\begin{theorem}[ = Theorem \ref{local_global_thm}]
		Let $n \in \{3,4,5\}$ and $d_3 = 9$, $d_4 = 18$, $d_5 = 168$.
There exists an integer $C > 0$ with the following property.
Given the following objects:
\begin{itemize}
	\item a CM field $F$ containing $\mu_{d_n}$,
	\item a prime $\ell$ such that $\ell \nmid C$ and $\ell \equiv 1 \pmod{d_n}$,
	\item a cuspidal cohomological automorphic representation $\Pi$ of $\GL_n(\mathbb{A}_F)$ which is $\iota$-ordinary at all places above $\ell$, 
\end{itemize}
such that
\begin{itemize}
	\item $\overline{r_{\iota}(\Pi)}(G_{F(\mu_\ell)})$ is enormous \cite[6.2.28]{10author} and $\overline{r_{\iota}(\Pi)}$ is absolutely irreducible and decomposed generic \cite[4.3.1]{10author},
	\item there exists $\sigma \in G_{F} \setminus G_{F(\mu_\ell)}$ such that $\overline{r_{\iota}(\Pi)}(\sigma)$ is a scalar,
	\item there exists $\gamma \in \GL_n(\overline{\mathbb{F}}_\ell)$ such that $\gamma \overline{r_\iota(\Pi)}(G_{F}) \gamma^{-1} \subset \GL_n(\mathbb{F}_\ell)$,
	\item $\det \overline{r_{\iota}(\Pi)} = \overline{\chi}_{cyc}^{-n(n-1)/2}$,
\end{itemize}
we have
\[
WD(r_{\iota}(\Pi) \rvert_{G_{F_v}})^{F-ss} \cong \operatorname{rec}_{F_v}(\Pi_v |\det |^{(1-n)/2}),
\]
for all places $v \nmid \ell$.
\end{theorem}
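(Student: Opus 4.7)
The plan is to adapt the method of Allen--Newton \cite{allen_newton}, replacing their use of tensor products of two-dimensional motives with the hypergeometric motives of Theorem \ref{potential_automorphy_thm}. By the results of Harris--Lan--Taylor--Thorne and Caraiani we already have local-global compatibility up to semisimplification, namely
\[
WD(r_{\iota}(\Pi)\rvert_{G_{F_v}})^{ss} \cong \operatorname{rec}_{F_v}(\Pi_v |\det|^{(1-n)/2})^{ss},
\]
so the remaining task is to match the monodromy operators and to establish Frobenius semisimplicity on the Galois side. Fix $v \nmid \ell$, let $p$ denote its residue characteristic, and let $N$ be the monodromy operator of $\operatorname{rec}_{F_v}(\Pi_v |\det|^{(1-n)/2})$.

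First I would produce a congruent automorphic representation whose local Weil--Deligne representation at primes above $p$ is completely understood. From Table \ref{special_hypergeos}, pick a hypergeometric parameter of dimension $n$ modulo $d_n$ whose unipotent monodromy at infinity realises $\exp(N)$, and apply Theorem \ref{potential_automorphy_thm} with this parameter, the prime $p$, the prime $\ell$, and $\overline{\rho} = \overline{r_{\iota}(\Pi)}$, taking $F^{avoid}$ to include the fixed field of $\overline{r_{\iota}(\Pi)}$ together with any auxiliary data required to preserve cuspidality under base change. This produces a totally real $F_2/\rationals$ disjoint from $F^{avoid}$; setting $F' = F F_2$, we obtain a cuspidal automorphic representation $\Pi'$ of $\GL_n(\mathbb{A}_{F'})$ such that $\overline{r_{\iota}(\Pi')} \cong \overline{r_{\iota}(\Pi)}\rvert_{G_{F'}}$, $\Pi'$ is $\iota$-ordinary at $\ell$ of weight $0$, and at every place $w \mid p$ of $F'$ the representation $WD(r_{\iota}(\Pi')\rvert_{G_{F'_w}})^{F-ss}$ is generic with monodromy of the prescribed Jordan type matching $N$.

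Next I would transfer this local control to $\Pi$. Arranging (as in the construction of \cite{HSBT}) that $F'/F$ be solvable, let $\Pi_{F'}$ denote the base change of $\Pi$; it remains cuspidal and $\iota$-ordinary at $\ell$ with the same residual representation as $\Pi'$. Both $r_{\iota}(\Pi_{F'})$ and $r_{\iota}(\Pi')$ are ordinary, have matching Hodge--Tate numerics, and hence lie in a common ordinary Hida family. An Allen--Newton style ordinary automorphy lifting theorem, valid because $\overline{r_{\iota}(\Pi)}$ is enormous, absolutely irreducible, decomposed generic, and admits a scalar element outside $G_{F(\mu_\ell)}$, then shows that the full Frobenius-semisimple Weil--Deligne representation at $w$ is determined inside this Hida family by the genericity and purity that $\Pi'$ enjoys. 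Since $\Pi'$ realises the prescribed Weil--Deligne type by construction, so does $\Pi_{F'}$, and descent through the solvable extension $F'/F$ together with compatibility of local Langlands with cyclic base change recovers the claimed identity for $\Pi$ at $v$.

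The main obstacle will be this automorphy lifting step, in which the entire Frobenius-semisimple Weil--Deligne representation (and not merely its semisimplification) is matched across the Hida family. This uses the genericity conclusion of Theorem \ref{potential_automorphy_thm} in an essential way: it forces the Frobenius eigenvalues on the graded pieces of the weight filtration to lie in general position, so that combining Scholze's purity theorem for hypersurfaces \cite{scholze_perfectoid} with the known upper bound on the Galois monodromy pins down both the Frobenius structure and the nilpotent operator uniquely.
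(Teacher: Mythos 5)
Your high-level strategy is the same as the paper's: use the potential automorphy theorem to produce a congruent $\Pi'$ over a base-change $F'$ with prescribed local monodromy, and then use an ordinary automorphy lifting argument to transfer the local information to $\Pi$. However, there are two substantive gaps.

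First, you take $N$ to be the monodromy operator of $\operatorname{rec}_{F_v}(\Pi_v|\det|^{(1-n)/2})$, i.e. the automorphic-side monodromy. The paper instead takes $N$ to be the monodromy of the \emph{Galois-side} Weil--Deligne representation $WD(r_\iota(\Pi)\rvert_{G_{F_v}})^{F\text{-}ss}$. A priori these differ (the Galois monodromy can only be a degeneration of the automorphic monodromy, and the whole point of the theorem is to rule out strict degeneration). The distinction matters: the automorphy lifting theorem you want to invoke (the paper's Lemma \ref{lifting_level_lemma}) requires, as an explicit hypothesis, that the monodromy operators of $r_\iota(\Pi')\rvert_{G_{F'_w}}$ and $r_\iota(\Pi)\rvert_{G_{F'_w}}$ be \emph{conjugate}. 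By construction $\Pi'$ has Galois monodromy equal to whatever $N$ you chose. If you chose the automorphic $N$ of $\Pi$ and the Galois $N$ of $\Pi$ is strictly smaller, this hypothesis fails and the lifting step breaks. You must start from the Galois $N$.

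Second, and more seriously, the key mechanism is left vague. You say that $r_\iota(\Pi)\rvert_{G_{F'}}$ and $r_\iota(\Pi')$ ``lie in a common ordinary Hida family'' and that ``the full Frobenius-semisimple Weil--Deligne representation at $w$ is determined inside this Hida family by the genericity and purity that $\Pi'$ enjoys.'' Merely lying in a common ordinary Hida family does \emph{not} control the monodromy at $w$: the local deformation ring at $w$ has several irreducible components indexed by nilpotent conjugacy classes, and points on different components can share residual representation and Hodge--Tate data. The paper's actual mechanism is a parahoric level argument: Lemma \ref{monodromy_bound_level_lemma} relates the parahoric level $K_v$ at $v$ to the rank of the automorphic monodromy in both directions (existence of $K_v$-fixed vectors for the right $K_v$ if the monodromy is prescribed, and a rank upper bound if there is a $K_v$-fixed vector), and Lemma \ref{lifting_level_lemma} (an ACC-style patching refinement of the Allen--Newton lifting theorem) transfers the level $K_v$ from $\Pi'$ to a lift $\Pi''$ of $r_\iota(\Pi)\rvert_{G_{F'}}$. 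Combined with Varma's theorem, this forces the rank of the Galois monodromy of $\Pi$ to be maximal, hence the WD representation to be generic. Your sketch does not articulate any replacement for this level/monodromy control, so as written it does not close.

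Two smaller points. Your final step appeals to ``descent through the solvable extension $F'/F$'', but $F_2$ produced by the Moret--Bailly argument is only totally real, not solvable over $\rationals$, and this cannot be arranged. Fortunately descent is unnecessary: genericity of $WD(r_\iota(\Pi)\rvert_{G_{F'_w}})^{F\text{-}ss}$ immediately implies genericity of $WD(r_\iota(\Pi)\rvert_{G_{F_v}})^{F\text{-}ss}$, since a nonzero $W_{F_v}$-equivariant map $V\to V(1)$ commuting with $N$ would restrict to a nonzero $W_{F'_w}$-equivariant one. Finally, the case $N=0$ is not covered by any hypergeometric parameter in the table; the paper handles it via Corollary \ref{potential_automorphy_cor}, which falls back on the main theorem of \cite{qian_potential}, and also handles the case $d_5=168$ (where the listed parameters are modulo $21$ and $24$, not $168$).
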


	\begin{num}
	During the preparation of the present article, Matsumoto \cite{matsumoto2024potential}
	has proven a very general local-global compatibility theorem which works for all $n$ and includes our theorem \ref{local_global_thm} as a special case. Their proof is based on the Harris tensor product trick and requires neither our potential automorphy theorem nor any new families of motives. 
	
	Previously, Varma \cite{varma_local_global} has proven local-global compatibility up to semisimplicity. For $n = 2$, Allen--Newton \cite{allen_newton} and Yang \cite{yang_monodromy} have proven many cases of full local-global compatibility. Their work was the inspiration for theorem \ref{local_global_thm}.
	
	When our theorem or Matsumoto's theorem applies, then it implies that condition (d) of \cite[Thm 5.3.5]{acampo_rigidity} is satisfied. This was the original motivation for this paper.
	\end{num}
	
	\subsection*{Acknowledgements} 
	I am grateful to my advisor James Newton for his encouragement to write up this paper and for many interesting discussions on the subject. I thank Alexander Bertoloni Meli, Lie Qian, Peter Scholze, Richard Taylor and Pol van Hoften for helpful conversations and correspondence.

	\section{The Dwork Family}
	
	\begin{num} 	
		In this section we recall the properties of the (unweighted) Dwork family and define the families of motives which are the subject of this paper. 
		Many authors have written about this subject, but our main references are \cite{deligne_milne_ogus_shih}, \cite{HSBT}, \cite{katz_another_look}, \cite{kloosterman_fermat}.
		
		Let $d > 2$ be an integer, set $\mathcal{O}_d = \ints[ \frac{1}{d}, e^{2\pi i/ d}] \subset \complex$ and let
		\[
		A_d = \mathcal{O}_d\left[t, \frac{1}{t^d - 1} \right].
		\]
	\end{num}
	
	\begin{definition}
		Let $d > 2$ be an integer. We define the projective $A_d$-scheme 
		\[
			\mathcal{Y} := \Proj A_d [x_0, \dots, x_{d-1}] /(F_t),
		\]
		where $F := \sum_{i = 0}^{d - 1} x_i^{d}$ and $F_t = F - t d x_0 \cdots x_{d-1}$. Denote the structure morphism by 
		\[	f : \mathcal{Y} \to \Spec A_d. \]
		One can check that $f$ is smooth using the Jacobian criterion.
	\end{definition}

	\begin{num}
		Let $\mu_d = \{z \in \complex : z^d = 1\} \subset \complex$ denote the group of $d$th roots of unity under multiplication. The group 
		\[
		H_0 := \{ (\zeta_0, \dots, \zeta_{d-1}) \in \mu_{d}^d : \zeta_0 \cdots \zeta_{d-1} = 1 \} / \{ (\zeta, \dots, \zeta) : \zeta \in \mu_d \}
		\] 
		acts on $\mathcal{Y}$ by 
		\[
			(\zeta_0, \dots, \zeta_{d-1}) \cdot (x_0 : \dots : x_{d-1}) = (\zeta_0 x_0 : \dots : \zeta_{d-1} x_{d-1}),
		\] 
		leaving $f$ invariant. On the fibre $\mathcal{Y}_0 := \mathcal{Y} \times_{\Spec A_d} \Spec A_d/(t)$, the same formula defines an action of the bigger group
		\[
			H := \mu_d^d / \{ (\zeta, \dots, \zeta) : \zeta \in \mu_d \}.
		\]
\end{num}

\begin{num}
	For $\underline{a} = (a_0, \dots, a_{d-1}) \in (\ints/d\ints)^d$ such that $\sum_{i} a_i = 0$ we define the character
	\[
	\chi_{\underline{a}} : H \to \mathcal{O}_d^\times : (\zeta_0, \dots, \zeta_{d-1}) \mapsto \prod_{i = 0}^{d-1} \zeta_i^{a_i}.
	\]
	Every character of $H$ is of this form for a unique $\underline{a}$ and $\chi_{\underline{a}} \rvert_{H_0} = \chi_{\underline{a}'} \rvert_{H_0}$ if and only if there exists $b \in \ints/d \ints$ such that 
	\[
	\underline{a}' = \underline{a} + b := (a_0 + b, a_1 + b, \dots, a_{d-1} + b).
	\]
\end{num}

\begin{num}
	If a group $G$ acts on the left on a topological space $X$, we define the induced left group action on its cohomology $H^\bullet(X, \ints)$ by 
	\[
	g \cdot \omega := (g^{-1})^* \omega
	\]
	for $g \in G$ and $\omega \in H^\bullet(X, \ints)$. If $V$ is a $\mathcal{O}_d[H_0]$-module we let $V_{\underline{a}}$ be the $\chi_{\underline{a}} \rvert_{H_0}$-isotypic part of $V$. For example
	\[
		H^{d-2}(\mathcal{Y}_{t}, \complex)_{\underline{a}} = \{ \omega \in H^{d-2}(\mathcal{Y}_{t}, \complex) : \forall h \in H_0, \ (h^{-1})^* \omega = \chi_{\underline{a}}(h) \omega\}.
	\]
\end{num}

\subsection{Statements} Here we define our families of motives and state their most important properties. The proofs follow in the subsequent sections.

\begin{definition} \label{hypergeo_param_def}
	Let $d > n > 0$ be integers. A hypergeometric parameter of dimension $n$ modulo $d$ is a tuple $(\alpha_1, \dots, \alpha_n; \beta_1, \dots, \beta_n)$ of elements of  $\ints/d\ints$ such that
	\begin{itemize}
		\item $\sum_i \alpha_i - \sum_i \beta_i = \binom{d}{2}$,
		\item $\alpha_i \neq \beta_j$ for $i, j \in \{1, \dots, n\}$,
		\item $\beta_i \neq \beta_j$ for $i \neq j$.
	\end{itemize}
\end{definition}

\begin{definition} \label{motive_def}
	Let $(\alpha_1, \dots, \alpha_n ; \beta_1, \dots, \beta_n)$ be a hypergeometric parameter of dimension $n$ modulo $d$. Let
	\[
		\underline{a} = (-\alpha_1, \dots, - \alpha_n, s_0, \dots, s_{d - n - 1}) \in (\ints/d\ints)^d,
	\]
	where
	$\{s_1, \dots, s_{d - n - 1}\} = \{0, \dots, d - 1\} \setminus \{- \beta_i\}$. Since $(\alpha_i ; \beta_j)$ is a hypergeometric parameter, we have $\sum_{i} a_i = 0$.
	Suppose $K$ is a field containing the $d$th roots of unity and $t \in K \setminus \mu_d$. We define
	\begin{itemize}
		\item $M(\alpha_1, \dots, \alpha_n; \beta_1, \dots, \beta_n)_{dR, t} := H^{d-2}_{dR}(\mathcal{Y}_t/ K)_{\underline{a}}^{(prim)}$,
		\item $M(\alpha_1, \dots, \alpha_n; \beta_1, \dots, \beta_n)_{\lambda, t} = H^{d-2}_{et}(\mathcal{Y}_{t, \overline{K}}, \mathcal{O}_{d,\lambda})_{\underline{a}}^{(prim)}$
		for primes $\lambda < \mathcal{O}_d$,
		\item $M(\alpha_1, \dots, \alpha_n ; \beta_1, \dots, \beta_n)_{B, t} :=  H^{d-2}_{sing}(\mathcal{Y}_{t}(\complex), \mathcal{O}_d)_{\underline{a}}^{(prim)}$ if $K \subset \complex$.
	\end{itemize}
	Here $(prim)$ refers to the primitive part of the cohomology and is only relevant for the definition when $n$ is odd and $d = n + 1$.
\end{definition}

\begin{prop}[= Proposition \ref{hodge_numbers_prop}]
	Let $(\alpha_1, \dots, \alpha_n ; \beta_1, \dots, \beta_n)$ be a hypergeometric parameter of dimension $n$ modulo $d$. For every $t \in \complex \setminus \mu_d$, we have:
	\begin{itemize}
		\item $M_{*,t}$ is a free module of rank $n$ for $* \in \{dR, B, \lambda\}$.
		\item The dimension of $\operatorname{gr}_{\mathrm{Hodge}}^{p} (M_{dR,t}) $ is equal to the number of indices $j \in \{1, \dots, n\}$ such that
		\[
		d(p + 1) = \binom{d}{2} + \sum_{i = 1}^{n} [\beta_j - \alpha_i]  - \sum_{i = 1}^n [\beta_j - \beta_i],
		\]
		where $[x]$ denotes the unique representative of $x$ such that $0 \leq [x] \leq d - 1$.
	\end{itemize}
\end{prop}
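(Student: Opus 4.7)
The plan is to exploit two facts. First, the family $f : \mathcal{Y} \to \Spec A_d$ is smooth and proper, the base is connected, and $H_0$ acts by regular automorphisms, so each $H_0$-isotypic piece of $R^{d-2}f_*\mathcal{O}$ (with its Hodge filtration) is a sub-variation of Hodge structure of the ambient one. Its Hodge numbers are therefore locally constant in $t$, and it suffices to compute them at the single fibre $t = 0$, where $\mathcal{Y}_0$ is the Fermat hypersurface $\sum_i x_i^d = 0$, on which the larger group $H$ acts.

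The $H$-action refines the $H_0$-decomposition: restriction of characters gives $\chi_{\underline{a}'}\rvert_{H_0} = \chi_{\underline{a}}\rvert_{H_0}$ iff $\underline{a}' = \underline{a} + b$ for some $b \in \ints/d\ints$, so
\[
H^{d-2}_{dR}(\mathcal{Y}_0/K)_{\underline{a}} = \bigoplus_{b \in \ints/d\ints} H^{d-2}_{dR}(\mathcal{Y}_0/K)_{\underline{a} + b}.
\]
By the classical Griffiths residue calculus for Fermat hypersurfaces (see \cite{katz_another_look}, \cite{deligne_milne_ogus_shih}), the summand indexed by a tuple $\underline{a}'$ with $\sum_i a'_i \equiv 0$ is one-dimensional if all entries $a'_i$ are nonzero mod $d$ and vanishes otherwise, and in the nonzero case it sits in Hodge bidegree $(p, d-2-p)$ with
\[
d(p+1) = \sum_{i = 0}^{d - 1} [a'_i].
\]

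It remains to apply this to $\underline{a} + b = (b - \alpha_1, \dots, b - \alpha_n, b + s_0, \dots, b + s_{d-n-1})$. Since $\{s_k\}_k = \ints/d\ints \setminus \{-\beta_j\}_j$, the condition that every entry be nonzero amounts to $b \notin \{\alpha_i\}_i$ together with $-b \notin \{s_k\}_k$, i.e.\ $b \in \{\beta_j\}_j$. Combined with the hypergeometric hypothesis $\alpha_i \neq \beta_j$, the contributing $b$ are exactly $\beta_1, \dots, \beta_n$; this immediately proves that $M_{dR,t}$ has rank $n$. For $b = \beta_j$ the first partial sum is $\sum_i [\beta_j - \alpha_i]$, while
\[
\sum_k [\beta_j + s_k] = \sum_{m = 0}^{d-1} m - \sum_{i = 1}^n [\beta_j - \beta_i] = \binom{d}{2} - \sum_i [\beta_j - \beta_i],
\]
because $\{\beta_j + s_k\}_k = \ints/d\ints \setminus \{\beta_j - \beta_i\}_i$. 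Substituting yields the formula in the statement.

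The rank statements for the Betti and $\lambda$-adic realisations follow from the same argument applied to singular and étale cohomology, or equivalently from the $H_0$-equivariance of the de Rham--Betti and étale--Betti comparison isomorphisms. The only real subtlety, and the expected main obstacle, is checking that everything goes through after taking primitive parts in the case $d = n+1$: the hyperplane class lives in the $\underline{a}' = \underline{0}$ isotypic component, and since $\underline{0}$ is automatically excluded by the nonvanishing condition it cannot coincide with any $\underline{a} + \beta_j$ we are counting, so the $(prim)$ decoration does not alter the computation. All other steps reduce to bookkeeping with the $[\,\cdot\,]$ function.
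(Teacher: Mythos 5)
Your proposal is correct and takes essentially the same route as the paper: reduce to the fibre $t = 0$ using local constancy of Hodge numbers and ranks, refine the $H_0$-isotypic piece $H^{d-2}(\mathcal{Y}_0)_{\underline a}$ into one-dimensional $H$-isotypic pieces via Griffiths residues, determine which characters $\underline a + b$ actually occur, and read off the Hodge degree of each. Your arithmetic is also clean: the key identity $d(p+1) = \sum_i [a'_i]$ for the Hodge degree of the $\chi_{\underline a'}$-eigenspace, and the simplification $\sum_k [\beta_j + s_k] = \binom{d}{2} - \sum_i[\beta_j - \beta_i]$, are exactly right, and you correctly observe that the $(prim)$ decoration is harmless because the Lefschetz class lies in the $\underline a' = \underline 0$ component, which is excluded by the ``all entries nonzero'' condition. (You could sharpen the last paragraph slightly by noting, as the paper does, that freeness of the $B$- and $\lambda$-realisations rests on torsion-freeness of the singular cohomology of smooth projective hypersurfaces, with the rank then carried across the comparison isomorphisms.) One minor point of comparison: the intermediate combinatorial formula $dl = \sum_i[b - a_i]$ appearing in the paper's paragraph~\ref{fermat_para}, and the subsequent identity in the paper's proof, both have a sign that does not literally agree with the geometry as written (the correct relation is $dl = \sum_i[-a_i - b - 1] = d(d-1) - \sum_i[a_i+b]$); your indexing by $\underline a'$ directly avoids this and lands on the stated formula without cancellation of typos.
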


\begin{prop}[= Proposition \ref{unipotent_monodromy_prop}]
	Let $(\alpha_1, \dots, \alpha_n ; \beta_1, \dots, \beta_n)$ be a hypergeometric parameter of dimension $n$ modulo $d$ and 
\[
M = M(\alpha_1, \dots, \alpha_n ; \beta_1, \dots, \beta_n).
\] 
For $t \in \complex \setminus \mu_d$ let $\gamma_\infty \in \pi_1(\complex \setminus \mu_d, t)$ be a simple loop based at $t$ going around $\infty$. Then $\gamma_{\infty}$ acts on $M_{B,t}$ as a unipotent operator with Jordan blocks $J_\alpha$ of size $\#\{i : \alpha_i = \alpha\}$ for $\alpha \in \{\alpha_i\}$.
Similarly, let $\gamma_1$ denote a simple loop around $1$. Then $\gamma_{1}$ acts as a unipotent operator on $M_{B,t}$ and
the rank of $\gamma_{1} - \id$ is $1$ if $d$ is odd and $0$ if $d$ is even.
\end{prop}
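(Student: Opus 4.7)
Both assertions reduce to analysing a degeneration of the Dwork family --- at $t=\infty$ and at $t=1$ respectively --- via nearby cycles / Picard--Lefschetz, followed by cutting out the $\underline{a}$-isotypic piece. The cleanest framing is to identify $M_B(\alpha;\beta)$ with a (twist of a) hypergeometric local system in the sense of Katz \cite{katz_another_look}, so that the prescribed monodromy can be read off the parameters; I sketch the argument in more direct geometric terms below.

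For the monodromy at $t=\infty$, I would construct a semistable model of $\mathcal{Y}$ over a small disk around $\infty$. Setting $s = t^{-1}$, performing a base change $s = \sigma^d$, and rescaling $x_i = \sigma y_i$, the equation $F_t = 0$ becomes $\sigma^d\sum y_i^d - d\prod y_i = 0$, whose limit at $\sigma = 0$ is the union of coordinate hyperplanes $\prod y_i = 0$ in $\mathbb{P}^{d-1}$ --- a normal crossings degeneration whose dual complex is the boundary of a $(d-1)$-simplex. The Steenbrink weight spectral sequence then computes the limit mixed Hodge structure and the nilpotent monodromy operator $N$ on $H^{d-2}_{\mathrm{lim}}$; restricting to the $\underline{a}$-isotypic piece, the combinatorics produces Jordan blocks of size $\#\{i : \alpha_i = \alpha\}$ for each distinct value $\alpha$ in $(\alpha_i)$. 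Alternatively, pulling back by $t\mapsto t^d$ one recognises the classical hypergeometric local system with these parameters, and invokes the standard result of Beukers--Heckman / Katz on the Jordan structure of its monodromy at $\infty$.

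For the monodromy at $t=1$, I would first compute the singular locus of $Y_1$. The Jacobian equations $\partial_i F_1 = 0$ force $x_i^d = \prod_j x_j$ for every $i$, and together with $F_1 = 0$ give exactly the projective points $(1:\zeta_1:\cdots:\zeta_{d-1})$ with $\zeta_i \in \mu_d$ and $\prod_i \zeta_i = 1$; these form a single $H_0$-orbit of size $|H_0| = d^{d-2}$, and a Hessian calculation confirms each is an ordinary double point. So the vanishing cycles are $\delta_h = h \cdot \delta_1$ for $h \in H_0$. By Picard--Lefschetz, $\gamma_1$ acts as a product of reflections/transvections along these cycles, and on the isotypic piece $M_{\underline{a}}$ the effective ``vanishing class'' is the $H_0$-projection $\delta_{\underline{a}} := \sum_{h \in H_0} \chi_{\underline{a}}(h^{-1})\, h \cdot \delta_1$, a single vector; hence $\operatorname{rank}((\gamma_1-\id)|_{M_{B,t}}) \le 1$.

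To conclude, I would distinguish by the parity of $d$. When $d$ is odd, $H^{d-2}(Y_t)$ carries an alternating cup product, each Picard--Lefschetz contribution is a unipotent transvection, so $\gamma_1$ is unipotent; the rank equals $1$ as soon as $\delta_{\underline{a}} \neq 0$, which I would verify by a Hodge-theoretic non-vanishing argument. When $d$ is even, the cup product is symmetric and each Picard--Lefschetz contribution is a reflection with a $(-1)$-eigenvector; the unipotency of $\gamma_1$ on $M_{\underline{a}}$ then forces $\delta_{\underline{a}} = 0$, which I expect to follow from an involution $\iota \in H_0$ that stabilises a singular point and acts as $-1$ on the local vanishing cycle --- an element available precisely for even $d$ --- combined with $\chi_{\underline{a}}(\iota) = 1$ for every $\underline{a}$ coming from a hypergeometric parameter. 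The main obstacle is this parity step, and in particular exhibiting the stabiliser element in the even case; routing through the hypergeometric local system identification is a cleaner alternative, since there the monodromy at the unit singular point is a classical pseudo-reflection with exactly the parity-dependent structure asserted here.
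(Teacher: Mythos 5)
Your plan takes a genuinely different route from the paper's. The paper proves both assertions by first identifying $M_{B,t}\otimes\complex$ with the solution sheaf of the Picard--Fuchs operator (Lemma \ref{hypergeometric_series_lemma}, Lemma \ref{diffeq_lemma}, Corollary \ref{local_system_cor}) and then invoking Levelt's normal form: Lemma \ref{monodromy_group_lemma} exhibits $\rho_{1/2}$ as the $[d]_*$-pullback of the Beukers--Heckman representation $HG(\alpha_i;\beta_j)$, so $\gamma_\infty$ acts by $A^d$ where $A$ is the companion matrix of $\prod_j(X-e^{2\pi i\alpha_j/d})$ (unipotent, with the claimed block sizes) and $\gamma_1$ acts by the pseudoreflection $u$ treated in Lemma \ref{transvection_lemma}. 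Your semistable-reduction and Steenbrink argument for $t=\infty$ is a plausible alternative, though to recover the block sizes on the $\underline{a}$-isotypic piece you would still need to track the $H_0$-action through the weight spectral sequence, which is exactly the combinatorial bookkeeping that the paper instead packages into the Picard--Fuchs computation of \S 2.4.

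The $t=1$ sketch has a concrete gap. Your description of the singular locus of $\mathcal{Y}_1$ is correct, but the stabiliser in $H_0$ of any node $(1:\zeta_1:\cdots:\zeta_{d-1})$ is trivial: $(\xi_0,\dots,\xi_{d-1})$ fixes it only if all $\xi_i$ coincide, i.e.\ only if it is the identity class. So the involution $\iota$ that your even-$d$ argument hinges on does not exist, for any $d$. Moreover, completing the Picard--Lefschetz computation honestly gives $\langle\delta_1,\delta_{\underline a}\rangle=\langle\delta_1,\delta_1\rangle$, which is $0$ for $d$ odd (alternating form on $H^{d-2}$, $d-2$ odd) and $\pm 2$ for $d$ even (symmetric form); since the cup product restricted to the $(\underline a,-\underline a)$-pair of isotypic pieces is still perfect, $\gamma_1-\id$ has rank exactly one \emph{in both cases}, and the parity of $d$ controls $\det(\gamma_1)=(-1)^{d-1}$ rather than the rank. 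This is exactly what Lemma \ref{transvection_lemma} asserts and what Lemma \ref{constant_det_lemma} uses downstream; it is not what your plan, which aims at rank $0$ for $d$ even and unipotence in all cases, would produce. So the parity step is not merely unfinished --- it cannot be finished as laid out, and the clean resolution is precisely the route through Lemma \ref{transvection_lemma} that the paper takes.
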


\begin{prop}[= Proposition \ref{big_monodromy_prop}]
	Let $(\alpha_1, \dots, \alpha_n ; \beta_1, \dots, \beta_n)$ be a hypergeometric parameter of dimension $n$ modulo $d$ and $t \in \complex \setminus \mu_d$. If the following hypotheses are satisfied 
\begin{itemize}
	\item $|\{\alpha_1, \dots, \alpha_n\}| < n$,
	\item $\{\beta_1, \dots, \beta_n\}$ does not form an arithmetic progression,
	\item there is no non-zero $s \in \ints/d\ints$ such that
	\[
	\{\alpha_i + s\} = \{\alpha_i\} \quad \wedge \quad \{\beta_i + s\} = \{\beta_i\},
	\]
	\item there is no $s \in \ints/d\ints$ such that
	\[
	\{-\alpha_i - s\} = \{\alpha_i + s\} \quad \wedge \quad \{-\beta_i -s\} = \{\beta_i + s\},
	\]
\end{itemize}
then the identity component of the Zariski closure of the image of the geometric monodromy representation
\[
\pi_1(\complex \setminus \mu_d, t) \to \GL(M(\alpha_1, \dots, \alpha_n ; \beta_1, \dots, \beta_n)_{B,t} \otimes_{\mathcal{O}_d} \complex)
\]
contains $\SL_n(\complex)$.
\end{prop}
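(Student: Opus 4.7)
The plan is to identify $\mathcal{L} := M(\alpha_1, \dots, \alpha_n ; \beta_1, \dots, \beta_n)_{B,t} \otimes_{\mathcal{O}_d} \complex$, regarded as a complex local system on $U := \complex \setminus \mu_d$, with (the pullback along $u = t^d$ of) a hypergeometric local system of type $\mathcal{H}(\alpha_i;\beta_j)$ on $V := \complex^\times \setminus \{1\}$, and then to invoke the Beukers--Heckman classification of monodromy groups of such local systems, using the four hypotheses to exclude every case other than $G^0 \supseteq \SL_n$.

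For the identification in step one, I would follow the Fourier-type analysis of the Dwork family used in \cite{katz_another_look} and \cite{kloosterman_fermat}: the $\underline{a}$-isotypic summand of the degree-$d$ pushforward along $t \mapsto t^d$ is a rank-$n$ hypergeometric sheaf on $V$, and $\mathcal{L}$ is its pullback. Granting this, the local monodromies of the resulting hypergeometric sheaf are pinned down: at $\infty$, unipotent with Jordan blocks $J_\alpha$ of size $\#\{i : \alpha_i = \alpha\}$ (Proposition \ref{unipotent_monodromy_prop}); at $0$, semisimple with eigenvalues $\zeta_d^{\pm \beta_j}$; and at $1$ (i.e. each $\zeta \in \mu_d$ upstairs), a pseudoreflection that matches the rank-$1$ unipotent loop of Proposition \ref{unipotent_monodromy_prop} when $d$ is odd, and that still appears in $G$ through the Kummer descent when $d$ is even.

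The second step is to apply the Beukers--Heckman theorem: the Zariski closure $G$ of the image of an irreducible hypergeometric monodromy representation whose local monodromy at $1$ is a pseudoreflection satisfies that either $G$ is finite, $G$ is imprimitive (induced up from a Kummer pullback), or $G^0$ is one of $\SL_n$, $\Sp_n$, $\SO_n$, or, for $n = 7$, $G_2$. The four hypotheses then exclude every non-$\SL_n$ case. Hypothesis (a), combined with Proposition \ref{unipotent_monodromy_prop}, yields a non-trivial unipotent element in $G$, so $G$ is infinite. Hypothesis (c) says no non-zero $s$ preserves both parameter multisets, which is precisely the non-Kummer condition and rules out imprimitive $G$. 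Hypothesis (b) eliminates the residual exceptional patterns where $\{\beta_j\}$ is an arithmetic progression (corresponding to further induction or finite-image phenomena). Hypothesis (d) is the negation of the parameter duality $\{\alpha_i\} = \{-\alpha_i - s\}$ and $\{\beta_j\} = \{-\beta_j - s\}$ that characterises the existence of an invariant symplectic or symmetric form, and therefore rules out $G^0 \subseteq \Sp_n$ and $G^0 \subseteq \SO_n$; the $G_2$ case only arises for $n = 7$ and can be excluded \emph{a posteriori} by the same duality hypothesis since the standard $7$-dimensional representation of $G_2$ is self-dual.

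The main obstacle I expect is the careful execution of the first step: tracking twists and conventions so that the Dwork summand indexed by $\underline{a}$ matches the hypergeometric sheaf with parameters $(\alpha_i;\beta_j)$, and in particular handling the $d$-even case where the bare loop $\gamma_1$ acts trivially on $M_{B,t}$, so that the pseudoreflection required to feed into Beukers--Heckman must be extracted after the Kummer descent rather than directly from the monodromy on $U$. Once the identification is set up, matching hypotheses (a)--(d) to the exceptional cases in the Beukers--Heckman dichotomy is a direct combinatorial verification of the form already signalled above.
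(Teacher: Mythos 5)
Your proposal follows the same overall strategy as the paper: identify the geometric monodromy of $M(\alpha_i;\beta_j)_{B,t}\otimes\complex$ with the hypergeometric monodromy group with parameters $(e^{2\pi i\alpha_i/d};e^{2\pi i\beta_j/d})$, and then invoke the Beukers--Heckman classification. Where you differ is in step one: you appeal to the Fourier/convolution sheaf description of the Dwork family following Katz and Kloosterman, while the paper (Lemmas \ref{hypergeometric_series_lemma} through \ref{monodromy_group_lemma}) computes the Picard--Fuchs operator explicitly from period expansions and then cites Levelt's theorem; both are valid, though the paper's route also produces the explicit hypergeometric series used elsewhere in \S 2. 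Two small corrections to the Beukers--Heckman step: (i) in their setting the local monodromy at $1$ is always a pseudoreflection and the $G_2$ case for $n=7$ does not arise --- that phenomenon belongs to Katz's more general hypergeometric sheaves --- so it is a phantom case that you discard anyway; (ii) your worry about extracting a pseudoreflection ``after Kummer descent'' when $d$ is even is unnecessary for this proposition, since Beukers--Heckman is applied directly to $HG(\alpha_i;\beta_j)$ on $\Gamma_{0,1,\infty}$, which always has a pseudoreflection at $\gamma_1$ by Lemma \ref{transvection_lemma}, and the $\SL_n$ conclusion then descends to the identity component of the Zariski closure of the geometric monodromy on $\complex\setminus\mu_d$ by the finite-index argument built into Lemma \ref{monodromy_group_lemma}; no pseudoreflection in the image of $\pi_1(\complex\setminus\mu_d)$ is needed. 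Your attribution of hypothesis (b) to ``residual exceptional patterns'' is vague, but the paper's own proof is equally terse at that point, merely pointing to Theorems 5.3 and 5.8 of Beukers--Heckman without spelling out which hypothesis feeds which theorem.
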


\begin{prop}[ = Proposition \ref{big_monodromy_finite_prop}]
	Let $(\alpha_1, \dots, \alpha_n ; \beta_1, \dots, \beta_n)$ be a hypergeometric parameter of dimension $n$ modulo $d$ and $t \in \complex \setminus \mu_d$ satisfying the hypotheses of the previous proposition. Moreover, let $U,V < (\ints/ d\ints)^\times$ be subgroups such that $\{1\} = U \cap V$, $UV = (\ints/d\ints)^\times$ and for $s \in (\ints/d\ints)^\times$ the equalities
\begin{align*}
	\{ s\alpha_i - s\alpha_j : 1 \leq i,j \leq n \} &= \{\alpha_i - \alpha_j : 1 \leq i,j \leq n\} \\ 
	\{ s\beta_i - s\beta_j : 1 \leq i,j \leq n \} &= \{\beta_i - \beta_j : 1 \leq i,j \leq n\}
\end{align*}
imply that $s \in U$. Let $\lambda < \mathcal{O}_d$ be a sufficiently large prime of residue characteristic $\ell$ such that $\ell \in V$. Then the image of
\[
\pi_1(\Spec A_d(\complex), t) \to \GL(M(\alpha_i ; \beta_j)_{B,t} \otimes_{\mathcal{O}_d} \mathcal{O}_d/\lambda) 
\]
contains $\SL_n(\mathcal{O}_d/\lambda)$.
\end{prop}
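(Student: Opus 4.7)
The strategy is to combine Proposition \ref{big_monodromy_prop} with a Larsen--Pink/Nori type theorem on reductions of $\ell$-adic Lie groups, and then use the hypothesis on the subgroups $U, V$ to control the field of definition of the mod $\lambda$ image.

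First, Proposition \ref{big_monodromy_prop} gives that the geometric monodromy of $M(\alpha_i; \beta_j)_{B,t} \otimes_{\mathcal{O}_d} \complex$ has Zariski closure with identity component containing $\SL_n(\complex)$. Since $\Spec A_d(\complex)$ is a disjoint union of copies of $\complex \setminus \mu_d$ indexed by embeddings $\mathcal{O}_d \hookrightarrow \complex$, for a fixed base point $t$ we have $\pi_1(\Spec A_d(\complex), t) = \pi_1(\complex \setminus \mu_d, t)$. By Artin comparison, the same Zariski density property holds for the action of $\pi_1$ on the $\lambda$-adic realisation $M(\alpha_i; \beta_j)_{\lambda, t}$: its image is an $\ell$-adic Lie subgroup of $\GL_n(\mathcal{O}_{d,\lambda})$ whose Zariski closure has identity component containing $\SL_n$.

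Next I would invoke a Larsen--Pink/Nori type theorem: for $\ell$ larger than a constant depending only on $n$, such an $\ell$-adic Lie subgroup reduces mod $\lambda$ to a subgroup of $\GL_n(\mathcal{O}_d/\lambda)$ containing a conjugate (in $\GL_n(\overline{\mathbb{F}}_\ell)$) of $\SL_n(k)$, where $k \subset \mathcal{O}_d/\lambda$ is the subfield generated by the traces of the image. To upgrade this to $k = \mathcal{O}_d/\lambda$, suppose for contradiction $k \subsetneq \mathcal{O}_d/\lambda$ and pick a non-trivial $\sigma \in \Gal((\mathcal{O}_d/\lambda)/k)$. Writing $\sigma = \Frob_\ell^j$, the Galois action on $\mathcal{O}_d$ corresponds to multiplication by $s := \ell^j$ on characters of $H_0$, so applying $\sigma$ to the coefficient ring sends the $\underline{a}$-isotypic subspace of $H^{d-2}_{et}(\mathcal{Y}_{t,\overline{K}}, \mathcal{O}_d/\lambda)$ to the $s\underline{a}$-isotypic subspace. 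A direct unwinding of Definition \ref{motive_def} identifies $s\underline{a}$ with the hypergeometric parameter $(s\alpha_i; s\beta_j)$, so $\rho^\sigma \cong \rho_{s\alpha, s\beta}$ as $\pi_1$-representations, while the $\sigma$-invariance of the trace function gives $\rho^\sigma \cong \rho$. Comparing traces of these two irreducible hypergeometric representations (equivalently, the semisimple local monodromy data up to character twist) yields the combinatorial equalities on difference sets in the hypothesis, so $s \in U$. But $\ell \in V$ forces $s = \ell^j \in V$, and $U \cap V = \{1\}$ gives $s = 1$, contradicting the non-triviality of $\sigma$.

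The main obstacle I anticipate is extracting the multiset equalities on differences from the abstract isomorphism $\rho_{\alpha, \beta} \cong \rho_{s\alpha, s\beta}$: the local monodromy at $\infty$ detects $\{\alpha_i\}$ only through its partition of multiplicities, and the pseudo-reflection (or trivial) monodromies at each $\zeta \in \mu_d$ do not directly involve the $\beta$'s, so the argument must rely on the global trace function at a generating family of loops (or, equivalently, on the rigidity properties of hypergeometric local systems together with the $\beta$-dependence of the Hodge data of Proposition \ref{hodge_numbers_prop}). The Larsen--Pink/Nori invocation is routine but must be stated carefully enough to handle the $\GL_n(\overline{\mathbb{F}}_\ell)$-conjugation ambiguity in $k$.
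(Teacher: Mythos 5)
Your proposal shares the two key ingredients with the paper's proof — a Nori-type big-image theorem and the difference-set hypothesis used to rule out proper reductive subgroups defined over proper subfields — but there are two genuine gaps, and the way you've packaged things hides where the real work lies.

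\textbf{Gap 1: the Nori/Larsen--Pink invocation is not a black box here.} You claim that for $\ell$ larger than a constant depending only on $n$, the mod-$\lambda$ image contains a $\GL_n(\overline{\mathbb{F}}_\ell)$-conjugate of $\SL_n(k)$ with $k$ the trace field. This is not what Nori's theorem (or Larsen--Pink) delivers: those results produce the $\mathbb{F}_\ell$-points of the \emph{Nori envelope} $\tilde G$ of the image, viewed inside $\GL_{n[\mathcal{O}_d/\lambda:\mathbb{F}_\ell]}(\mathbb{F}_\ell)$, and the whole point is to identify $\tilde G$ with $\Res_{k/\mathbb{F}_\ell}\SL_n$. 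That identification is precisely the content of the paper's argument: it constructs the $\mathcal{O}_K$-module $W\subset\mathfrak{g}=\operatorname{Lie}\bigl((\Res_{F/\rationals}\SL_n)_X\bigr)$ generated by the logarithms of unipotent elements of the image, shows $\mathfrak{g}\otimes\complex$ is a \emph{multiplicity-free} $\Gamma_{0,1,\infty}$-module (here the difference-set hypothesis enters), observes that $\log\gamma_\infty^d$ has non-zero projection to every simple summand, deduces $W[1/C]=\mathfrak{g}[1/C]$, and then invokes Katz's Theorem 12.4.1 with this Lie algebra input. The constant therefore depends on the specific family (via $C$), not only on $n$; your claim of $n$-dependence would already be inconsistent with the proposition's ``sufficiently large $\lambda$''.

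\textbf{Gap 2: the trace/Clifford-theory step.} You are right to flag the passage from the isomorphism $\rho_{\alpha,\beta}\cong\rho_{s\alpha,s\beta}$ of $\pi_1(\complex\setminus\mu_d)$-representations to the difference-set equalities. The monodromy of $\rho_{1/2}$ at $\infty$ is $A^d$, which is unipotent and only sees the multiplicity partition of $\{\alpha_i\}$, so this step is genuinely non-trivial. It can be filled — one uses that $[d]_*\pi_1$ is normal in $\Gamma_{0,1,\infty}$ with cyclic quotient $\ints/d$, so by Clifford theory the isomorphism of restrictions upgrades (for $\ell$ large, so that irreducibility persists) to $HG(s\alpha;s\beta)\cong HG(\alpha;\beta)\otimes\chi$ for a character $\chi$ of $\ints/d$, and then comparing eigenvalues of $\gamma_\infty$ and $\gamma_0$ gives $\{s\alpha_i\}=\{\alpha_i+c\}$, $\{s\beta_j\}=\{\beta_j+c'\}$, hence the difference-set equalities — but you have not supplied this argument.

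\textbf{Comparison with the paper.} The paper avoids both issues by working in characteristic zero over $\mathcal{O}_K$ (where $\mathcal{O}_K$ is the fixed field of $V$) and with $\Gamma_{0,1,\infty}$-representations rather than $\pi_1(\complex\setminus\mu_d)$-representations: the adjoint action of $\Gamma_{0,1,\infty}$ on $\mathfrak{g}$ sees the eigenvalue data of $A$ and $B$ directly, so the multiplicity-freeness of $\mathfrak{g}\otimes\complex$ drops straight out of the difference-set hypothesis without a Clifford-theory detour, and Katz's group-scheme version of Nori's theorem then does the reduction mod $\lambda'$ in one stroke. Your Galois-descent idea is a reasonable alternative way of \emph{organising} the field-of-definition bookkeeping, but as written it presupposes a version of the big-image theorem that you would still have to prove by something like the paper's Lie algebra computation.
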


\begin{num}
	For $a \in (\ints/d\ints) \setminus \{0\}$ and a divisor $k \mid d$ such that $k a \neq 0$, we define the functions $\delta_a, \epsilon_{k, a} : (\ints/d\ints) \setminus \{0\} \to \ints$ as follows
	\begin{align*}
		\delta_a(x) = \begin{cases}
			1 & x = a \\
			0 & \text{ otherwise }
		\end{cases}
	\end{align*} 
	and
	\[
		\epsilon_{k,a}(x) = \delta_{- k a} + \sum_{0 \leq j \leq k - 1} \delta_{a + j d/k}.
	\]
	We let $E(d)$ denote the set of functions consisting of $\epsilon_{1,a}$ and $\epsilon_{p,a}$, where $p$ is a prime divisor of $d$.
\end{num}

\begin{prop}[= Proposition \ref{determinant_nth_power_prop}]
	Let $(\alpha_1, \dots, \alpha_n ; \beta_1, \dots, \beta_n)$ be a hypergeometric parameter of dimension $n$ modulo $d$ and $F$ a number field containing the $d$th roots of unity. Let $t \in F \setminus \mu_d$ and $\underline{c} \in (\ints/d\ints)^3$ such that $c_0 + c_1 + c_2 = 0$ and either $c_i \neq 0$ for $i =0,1,2$ or $c_0 = c_1 = c_2 = 0$.
Suppose the following hypotheses are satisfied:
\begin{itemize}
	\item If $d$ is even, then $1 - t^d$ is a square.
	\item For all $s \in (\ints/d\ints)^\times$, the following $n$ integers are pairwise distinct
	\[
	\sum_{i} [s \beta_j - s \alpha_i] - \sum_{i} [s \beta_j - s \beta_i] \qquad j = 1, \dots, n.
	\]
	\item The following quantity is independent of $s \in (\ints/d\ints)^\times$
	\[
	\sum_{i,j} [s \beta_j - s \alpha_i] - \sum_{i,j} [s \beta_j - s \beta_i] + n \sum_{i = 0}^2[s c_i].
	\]
	\item For each $s \in (\ints/d\ints)^\times$ we have
	\[
	\sum_{i,j} [s \beta_j - s \alpha_i] = n \sum_i [s \beta_i - s \alpha_i].
	\]
	\item There are integers $x_{\epsilon}$ for $\epsilon \in E(d)$ such that
	\[
	n + \sum_{i,j} \delta_{\beta_j - \alpha_i} - \sum_{i \neq j} \delta_{\beta_j - \beta_i} + n \sum_{i = 0}^2 \delta_{c_i} = \sum_{\epsilon \in E(d)} x_{\epsilon} \epsilon.
	\]
	Let $b_1$ and $b_p$ be the integers defined in corollary \ref{gamma_degree_cor}. The integer
	$\varphi(\operatorname{lcm}(2 b_1, d))/\varphi(d)$ is coprime to $n$ and $b_p$ is coprime to $n$.
\end{itemize}
Then there exists a character $\psi : G_{\rationals(\mu_d)} \to \mathcal{O}_{d,\lambda}^\times$, such that 
\[
\det(M(\alpha_i ; \beta_j)_{\lambda, t} \otimes \psi) = \chi_{cyc}^{-n(n - 1)/2}.
\]
\end{prop}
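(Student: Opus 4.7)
The plan is to compute $\det(M(\alpha_i;\beta_j)_{\lambda,t})$ as an explicit product of atomic $\lambda$-adic characters indexed by $E(d)$, and then use the coprimality of $n$ with $\varphi(\operatorname{lcm}(2b_1,d))/\varphi(d)$ and with $b_p$ to extract an $n$-th root. Since $M_{\lambda,t}$ has rank $n$, the identity
\[
\det(M_{\lambda,t} \otimes \psi) = \det(M_{\lambda,t}) \cdot \psi^n
\]
reduces matters to constructing a character $\psi$ satisfying $\psi^n = \chi_{cyc}^{-n(n-1)/2} \cdot \det(M_{\lambda,t})^{-1}$.

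The first step is to identify $\det(M_{\lambda,t})$ on the nose. To each $\epsilon = \epsilon_{k,a} \in E(d)$ I would associate a specific one-dimensional $\lambda$-adic character $\pi_\epsilon$ of $G_{\rationals(\mu_d)(t)}$: the character $\pi_{\epsilon_{1,a}}$ is built from the classical Jacobi-type character on $H^1$ of a twisted Fermat curve together with a Kummer character coming from $(1-t^d)$, while $\pi_{\epsilon_{p,a}}$ involves in addition a $p$-th root of $1-t^d$; the degrees of their fields of definition are precisely the $b_1$ and $b_p$ of corollary \ref{gamma_degree_cor}. I would then compute $\det(M_{\lambda,t})$ by restricting to the Fermat fibre $t=0$, where the $\underline{a}$-isotypic part has determinant equal to a classical Jacobi-sum Hecke character expressible via the $\pi_\epsilon$, and propagating the identity to general $t$ using that the variation of $\det$ in the family picks up only a Kummer twist in $t$ and $1-t^d$. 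The combinatorial identity
\[
n + \sum_{i,j} \delta_{\beta_j-\alpha_i} - \sum_{i\neq j} \delta_{\beta_j-\beta_i} + n\sum_{i=0}^{2} \delta_{c_i} = \sum_{\epsilon\in E(d)} x_\epsilon \epsilon
\]
is exactly the multiplicative relation required to express $\det(M_{\lambda,t})\cdot\chi_{cyc}^{n(n-1)/2}$, after absorbing the auxiliary $\underline c$-contribution into a Tate twist, as $\prod_\epsilon \pi_\epsilon^{x_\epsilon}$. The three preceding bullets are needed to match Hodge--Tate weights on both sides (via proposition \ref{hodge_numbers_prop}), to guarantee that both sides transform identically under the $(\ints/d\ints)^\times$-action coming from $\Gal(\rationals(\mu_d)/\rationals)$, and, in the $d$ even case, to remove the square-root-of-discriminant sign ambiguity, explaining the hypothesis that $1-t^d$ be a square.

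Given this factorisation, each $\pi_\epsilon$ has image in a finite cyclic group whose order divides $\varphi(\operatorname{lcm}(2b_1,d))/\varphi(d)$ if $\epsilon=\epsilon_{1,a}$, and $b_p$ if $\epsilon=\epsilon_{p,a}$. By the final hypothesis each of these integers is coprime to $n$, so the $n$-th power map is a bijection on the corresponding character group, and each $\pi_\epsilon$ admits a unique $n$-th root in it. Taking
\[
\psi = \chi_{cyc}^{-(n-1)/2} \cdot \prod_\epsilon \pi_\epsilon^{-x_\epsilon/n}
\]
(where the cyclotomic factor is the necessary correction for the $\underline c$-twist and, when $n$ is even, is itself constructed on a quadratic extension allowed by the $1-t^d$ square hypothesis) then gives the desired character.

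The main obstacle is the factorisation of $\det(M_{\lambda,t})\cdot\chi_{cyc}^{n(n-1)/2}$ in terms of the $\pi_\epsilon$. This requires a careful motivic interpretation of each $\epsilon\in E(d)$ — essentially realising $\pi_\epsilon$ as the determinant of a motivic summand of rank $\sum_x \epsilon(x)$ built from a Fermat curve and a Kummer cover — together with a computation of the $\lambda$-adic determinant of the $\underline{a}$-isotypic part of $H^{d-2}(\mathcal{Y}_t)$ via Gross--Koblitz/Davenport--Hasse identities at $t=0$, combined with the Kodaira--Spencer variation in $t$. Once this is in place, the remaining verifications — checking the cohomological weight shifts, the primitive-part subtlety when $d=n+1$, and that $\psi$ is defined over $\rationals(\mu_d)$ rather than a further extension — are bookkeeping in the style of \cite{HSBT} and \cite{katz_another_look}.
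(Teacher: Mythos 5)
Your high-level plan agrees with the paper's: express $\det(M_{\lambda,t})$ in terms of Jacobi sums (propagating from the Fermat fibre $t=0$ via Lemma \ref{constant_det_lemma} and Lemma \ref{determinant_jacobi_sum_lemma}), isolate the part that is not a power of $\chi_{cyc}\psi_{\underline c}$, and then use the coprimality hypotheses to take an $n$th root. But the way you propose to extract the $n$th root has a genuine gap, and it is also not what the paper does.

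You factor the relevant finite-order character as a product $\prod_{\epsilon} \pi_\epsilon^{x_\epsilon}$ of ``atomic'' characters attached to the basis elements $\epsilon \in E(d)$, and you claim each individual $\pi_\epsilon$ has order dividing $b_1$ (resp.\ $b_p$) so that each $\pi_\epsilon$ can be $n$th-rooted on its own. This is not justified. The integers $b_1$ and $b_p$ in the hypothesis are, by definition in Corollary \ref{gamma_degree_cor}, the denominators of the \emph{sums} $y_1$ and $y_p$ built from the coefficient vector $(x_{m,a})$ of the whole $f$; they are not associated to a single $\epsilon$. The denominator of a sum can be much smaller than the denominators of the individual summands, so the coprimality of $b_p$ with $n$ says nothing about the order of a single $\pi_{\epsilon_{p,a}}$. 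Concretely, for $d=9$, $p=3$ one has $y_3(\epsilon_{3,1})=7/18$, $y_3(\epsilon_{3,2})=5/18$, both with denominators divisible by $3$, yet $y_3(\epsilon_{3,1}+4\epsilon_{3,2})=3/2$ has denominator coprime to $3$. Thus your step ``each $\pi_\epsilon$ admits a unique $n$th root'' fails; one can only root the full product. A related symptom is your formula $\psi=\chi_{cyc}^{-(n-1)/2}\prod_\epsilon \pi_\epsilon^{-x_\epsilon/n}$: for $n$ even the exponent $-(n-1)/2$ is not an integer and $\chi_{cyc}$ has no square root over $G_{\rationals(\mu_d)}$, and the half-integral defect cannot be pushed into a ``quadratic extension'' since $\psi$ must be a character of $G_{\rationals(\mu_d)}$ itself.

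The paper avoids all of this by never splitting into atoms. It introduces the single function $f = n + \sum_{i,j}\delta_{\beta_j-\alpha_i} - \sum_{i\ne j}\delta_{\beta_j-\beta_i} + n\sum_i\delta_{c_i}$, uses Lemma \ref{gamma_algebraic_lemma} to identify the finite-order piece of $\det(M_{\lambda,t})\cdot\psi_{\underline c}^n\cdot\chi_{cyc}^{w-2n}$ with the character $\chi$ of $G_{\rationals(\mu_d)}$ governing the Galois action on the transcendental number $\Gamma(f)$, and then invokes Corollary \ref{gamma_degree_cor} to conclude that the order of $\chi$, namely $[\rationals(\mu_d,\Gamma(f)):\rationals(\mu_d)]$, is coprime to $n$. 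Coprimality of the order of the \emph{whole} character with $n$ is exactly what makes $\chi=\eta^n$ solvable, and then $\psi$ is assembled from $\eta^{-1}$, $\psi_{\underline c}$, and an integral power of $\chi_{cyc}$ determined by the congruence $w\equiv -n(n-1)/2\pmod n$ (which is where the third hypothesis is used). No motivic realisation of the $\pi_\epsilon$ and no per-atom root extraction is needed. If you want to keep your decomposition into $\pi_\epsilon$'s, the fix is to take the $n$th root of $\prod_\epsilon\pi_\epsilon^{x_\epsilon}$ as a whole, using exactly the field-degree bound of Corollary \ref{gamma_degree_cor}, which brings you back to the paper's argument.
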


\begin{prop} \label{computed_motives_prop}
	Let $(\alpha_1, \dots, \alpha_n ; \beta_1, \dots, \beta_n)$ be a hypergeometric parameter of dimension $n$ modulo $d$ appearing with $U < (\ints/d\ints)^\times$ in table \ref{special_hypergeos} at the end of the paper. Let $V < (\ints/d\ints)^\times$ be a subgroup such that $\{1\} = U \cap V$, $UV = (\ints/d\ints)^\times$. Then the family of motives $M = M(\alpha_1, \dots, \alpha_n; \beta_1, \dots, \beta_n)$ enjoys the following properties 
	\begin{enumerate}
		\item[(BM)] For $t \in \complex \setminus \mu_d$, the Zariski closure of the image of
		\[ \pi_1(\complex \setminus \mu_d, t) \to \GL(M_{B,t} \otimes_{\mathcal{O}_d} \complex)
		\]
		contains $\SL_n(\complex)$. Moreover, there exists a constant $C = C(d, \alpha_i; \beta_i)$ such that if $\lambda \neq \lambda' < \mathcal{O}_d$ are primes above $\ell, \ell'$ not dividing $C$ and $\ell, \ell' \in V$, then
		the image of 
		\[
			\pi_1(\complex \setminus \mu_d, t) \to \GL( M_{B,t} \otimes_{\mathcal{O}_d} \mathcal{O}_d/\lambda\lambda')
		\]
		contains $\SL_n(\mathcal{O}_d/\lambda\lambda')$.
		\item[(R)] For $t \in \complex \setminus \mu_d$, the Hodge numbers of $M_{dR,t} \otimes_{\iota} \complex$ are $\leq 1$ for every embedding $\iota : \mathcal{O}_d \to \complex$.
		\item[(UM)]  For $t \in \complex \setminus \mu_d$, let $\gamma_\infty$ denote a simple loop around $\infty$ based at $t$. Then $\gamma_{\infty, *}$ acts on $M_{B,t}$ as a unipotent operator with Jordan blocks $J_\alpha$ of size $\#\{i : \alpha_i = \alpha\}$ for $\alpha \in \{\alpha_i\}$.
		\item[(D)] Let $F$ be a number field containing the $d$th roots of unity and $t \in F \setminus \mu_d$ such that if $d$ is even, then $1 - t^d$ is a square. The Galois group $G_F$ acts on  $\bigwedge^n M_{\lambda, t}$ as $\chi_{cyc}^{-n(n - 1)/2} \psi^{-n}$, for some continuous character $\psi : G_{F} \to \mathcal{O}_{d,\lambda}^\times$ which is independent of $t$.
	\end{enumerate}
\end{prop}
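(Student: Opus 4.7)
The plan is to reduce the four claimed properties to applications of the preceding propositions in this section, each of which has been stated in terms of purely combinatorial conditions on the hypergeometric parameter $(\alpha_i ; \beta_j)$. Since table \ref{special_hypergeos} is produced by the computer search described in \S 4, the verification boils down to checking, for each listed row, that the relevant combinatorial hypotheses are met; the constant $C = C(d, \alpha_i, \beta_j)$ in (BM) and elsewhere will be collected at the end from the finitely many primes excluded by each application.

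Property (UM) is literally the conclusion of Proposition \ref{unipotent_monodromy_prop} and requires no extra input. Property (R) is the conclusion of Proposition \ref{hodge_numbers_prop} combined with the pairwise distinctness condition
\[
\sum_{i} [s \beta_j - s \alpha_i] - \sum_{i} [s \beta_j - s \beta_i], \qquad j = 1, \dots, n,
\]
for every $s \in (\ints/d\ints)^\times$; this distinctness is one of the conditions the search explicitly verifies. Property (D) follows directly from Proposition \ref{determinant_nth_power_prop}: its five bullet-list hypotheses are again combinatorial conditions on $(\alpha_i; \beta_j)$ (given our assumption that $1 - t^d$ is a square when $d$ is even), and the character $\psi$ it produces is manifestly independent of $t$ since it is built from cyclotomic and Jacobi-sum characters only.

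For (BM), the Zariski-density statement over $\complex$ is exactly the conclusion of Proposition \ref{big_monodromy_prop}, whose four combinatorial hypotheses are again among those verified by the search. The mod-$\lambda\lambda'$ statement requires an additional argument: by Proposition \ref{big_monodromy_finite_prop} applied separately to $\lambda$ and $\lambda'$, the image $\Gamma$ of $\pi_1(\complex \setminus \mu_d, t)$ in $\GL_n(\mathcal{O}_d/\lambda) \times \GL_n(\mathcal{O}_d/\lambda')$ surjects onto $\SL_n$ in each factor; the existence of the subgroups $U, V$ with $\{1\} = U \cap V$ and $UV = (\ints/d\ListEnd)^\times$ is what makes the hypothesis on $\ell, \ell' \in V$ compatible with the proposition. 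Letting $\Gamma_\lambda, \Gamma_{\lambda'}$ denote the two projected images and intersecting with $\SL_n \times \SL_n$, one invokes Goursat's lemma: any subgroup of $\SL_n(\mathbb{F}_q) \times \SL_n(\mathbb{F}_{q'})$ projecting onto each factor is either the full product or the graph of an isomorphism between quotients. Since the simple quotients $\PSL_n(\mathbb{F}_q)$ and $\PSL_n(\mathbb{F}_{q'})$ are non-isomorphic once the residue fields are distinct and of sufficiently large order (again after excluding finitely many primes into $C$), the graph case is impossible and $\Gamma$ contains $\SL_n(\mathcal{O}_d/\lambda\lambda')$ as claimed.

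The main obstacle is essentially bookkeeping rather than a new idea: one has to ensure that the computer search of \S 4 simultaneously verifies the hypotheses of Propositions \ref{hodge_numbers_prop}, \ref{big_monodromy_prop}, \ref{big_monodromy_finite_prop}, and \ref{determinant_nth_power_prop} for each row of table \ref{special_hypergeos}, and that all excluded primes (those violating the conclusion of the finite monodromy proposition, those obstructing the Goursat step, and those arising from the congruence conditions in (D)) are absorbed into a single constant $C$ depending only on the parameter.
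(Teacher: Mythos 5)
Your proposal is correct and takes essentially the same route as the paper: the paper's own proof is the one-line remark that the result reduces to the preceding propositions plus the computer verification of \S 4, and you have simply spelled out the dictionary — (UM) from Proposition \ref{unipotent_monodromy_prop}, (R) from Proposition \ref{hodge_numbers_prop}, (D) from Proposition \ref{determinant_nth_power_prop}, and (BM) from Propositions \ref{big_monodromy_prop} and \ref{big_monodromy_finite_prop} together with the Goursat argument, which in the paper is the unlabelled corollary immediately following Proposition \ref{big_monodromy_finite_prop}. (Minor note: the corollary in the paper explicitly assumes $\ell_1 \neq \ell_2$, matching the distinct-residue-characteristic assumption your Goursat step quietly relies on, and the typo $\ListEnd$ in your proposal should read $\ints$.)
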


\begin{proof}
	With the previous propositions this proof becomes a purely computational task which has been completed by the computer program described in \S 4.
\end{proof}

\subsection{The Method of Griffiths} The work of Griffiths \cite{griffiths_periods} conveniently describes the Hodge theory and periods of projective hypersurfaces such as $\mathcal{Y}_t$. Thus, we recall a few aspects of Griffiths' theory and then apply it to the local system $M(\alpha_i; \beta_j)_{B, t}$ and the variation of Hodge structures given by $M(\alpha_i; \beta_j)_{dR,t}$.

\begin{num}
	For $t \in \Spec A_d(\complex) = \complex \setminus \mu_d$, there is the tube map
	\[
	\tau : H_{d-2}(\mathcal{Y}_t(\complex), \mathcal{O}_d)^{(prim)} \to H_{d-1}(\mathbb{P}^{d - 1}(\complex) \setminus \mathcal{Y}_t(\complex), \mathcal{O}_d)
	\]
	defined in \cite[\S 3]{griffiths_periods}. It is an isomorphism by \cite[Prop 3.5]{griffiths_periods}. 
\end{num}

\begin{num}
	We describe the cohomology of the smooth projective hypersurface $\mathcal{Y}_t \subset \mathbb{P}^{d-1}$ in terms of the de Rham cohomology of the affine variety $\mathbb{P}^{d-1} \setminus \mathcal{Y}_t$ as follows. Define the vector spaces
	\[
	A_l^q(\mathcal{Y}_t) := \{ \omega \in H^0(\mathbb{P}^{d-1}_{\complex} \setminus \mathcal{Y}_{t,\complex}, \Omega^q_{\mathbb{P}^{d-1}}) : F_t^l \omega \text{ is regular  on } \mathbb{P}^{d-1}_{\complex} \}
	\]
	and let $F^\bullet H^{d - 2}(\mathcal{Y}_t, \complex)$ denote the Hodge filtration. Then \cite[Thm 8.1]{griffiths_periods} implies that we have linear maps
	\[
	A^{d-1}_{l}(\mathcal{Y}_t)/ d A^{d-1}_{l-1}(\mathcal{Y}_t) \xrightarrow{R} F^{d - 1 - l} H^{d - 2}(\mathcal{Y}_t(\complex), \complex)
	\]
	for all $l \geq 1$
	which satisfy the following characterising property.
	If $\gamma \in H_{d-2}(\mathcal{Y}_t(\complex), \mathcal{O}_d)$, then
	\[
	\frac{1}{2 \pi i} \int_{\tau(\gamma)} \omega = \int_{\gamma} R(\omega).
	\]
	In particular, for all $h \in H_0$ we have $R(h^* \omega) = h^* R(\omega)$.
	We moreover know that for $l = 1, 2, \dots, d - 1$, the map
	\[
	A^{d-1}_{l}(\mathcal{Y}_t)/ d A^{d-1}_{l-1}(\mathcal{Y}_t) \xrightarrow{R} F^{d - 1 - l} H^{d - 2}(\mathcal{Y}_t(\complex), \complex)
	\]
	is injective and the image is the primitive part of the cohomology \cite[Thm 8.3]{griffiths_periods}.
\end{num}

\begin{num} \label{griffiths_explicit_iso_para}
	Let $\complex[x_0, \dots, x_{d-1}]_{k}$ denote the vector space of homogeneous degree $k$ polynomials. By \cite[Cor 2.11]{griffiths_periods}, we have an isomorphism 
	\[
		\complex[x_0, \dots, x_{d-1}]_{dl} \to A_{l + 1}^{d-1}(\mathcal{Y}_t) : P \mapsto \frac{P}{F_t^{l + 1}} \Omega
	\]
	for $l = 0, \dots, d- 2$,
	where
	\[
	\Omega = \sum_{i = 0}^{d-1} (-1)^{i} x_i dx_{0} \wedge \dots \wedge \widehat{d x_{i}} \wedge \dots dx_{d-1}.
	\]
	Let $J_t = (\frac{\partial}{\partial x_0} F_t, \dots, \frac{\partial}{\partial x_{d-1}}F_t ) < \complex[x_0, \dots, x_{d-1}]$ be the Jacobian ideal of $F_t$.
	Then \cite[Prop 4.6]{griffiths_periods} implies that the map 
	\begin{align*}
		\left( \complex[x_0, \dots, x_{d-1}] /J_t \right)_{dl} & \to H^{d-2 - l, l}(\mathcal{Y}_t(\complex)) \\
		P & \mapsto R \left( \frac{P}{F_t^{l + 1}} \Omega \right)
	\end{align*}
	is a well-defined injection whose image is the primitive part of the cohomology.
\end{num}

\subsection{The Cohomology at $t = 0$}
	The cohomology of $\mathcal{Y}_0$ is described in detail in \cite[I. \S 7]{deligne_milne_ogus_shih}. We recall parts of this description here, since we will need it for our study of the periods of $\mathcal{Y}_t$ for $t \neq 0$.
	
\begin{num} \label{fermat_para}
	For $t = 0$, we have the particularly simple Jacobian ideal 
	\[
		J_0 = (x_0^{d-1}, \dots, x_{d-1}^{d-1}).
	\]
	Thus, $\complex[x_0,\dots, x_{d-1}]/J_0$ has the monomial basis
	\[
		\{ x^{\underline{e}} = x_0^{e_0} x_1^{e_1} \dots x_{d-1}^{e_{d-1}} : \forall i = 0, \dots, d - 1, \ e_i < d - 1 \}.
	\]
	Moreover, if $h = (\zeta_0, \dots, \zeta_{d-1}) \in H$, then 
	\[
		(h^{-1})^* \frac{ x^{\underline{e}} \Omega}{F^{l + 1}} = \prod_{i = 0}^{d-1} \zeta_i^{-e_i - 1} \frac{ x^{\underline{e}} \Omega}{F^{l + 1}}.
	\]
	Consequently, paragraph \ref{griffiths_explicit_iso_para} implies that for $\underline{a} \in (\ints/d\ints)^d$ such that $\sum_{i = 0}^{d-1} a_i = 0$, we have the decomposition
	\[
		H^{d - 2 - l,l}(\mathcal{Y}_0(\complex))_{\underline{a}}^{(prim)} = \bigoplus_{\underline{e} \in B_{\underline{a}}^{l}} R\left( \frac{x^{\underline{e}} \Omega}{F^{l + 1}} \right) \complex,
	\]
	where 
	\[
		B_{\underline{a}}^{l} = \left\{ \underline{e} \in \{0, \dots, d-2\}^{d} : \exists b \in \ints/d\ints,  \begin{array}{l} -e_i - 1 \equiv a_i + b \pmod{d} \\ \sum_{i = 0}^{d-1} e_i = dl \end{array} \right\}.
	\]
	Equivalently, the dimension of $H^{d- 2 - l, l}(\mathcal{Y}_0(\complex))_{\underline{a}}^{(prim)}$ is equal to the number of solutions to the equation
	\[
		d l = \sum_{i = 0}^{d-1} [b - a_i]
	\] 
	with $b \in \ints/d\ints \setminus \{-a_0, \dots, - a_{d-1}\}$, where $[x]$ denotes the unique representative of $x \in \ints/d\ints$ in $\{0,1,\dots, d-1\}$.
	Summing over $l = 0, \dots, d-2$ we find that
	\begin{align*}
		\dim H^{d-2}(\mathcal{Y}_0(\complex), \complex)_{\underline{a}}^{(prim)} &= | \ints/d\ints \setminus \{-a_0, \dots, -a_{d-1} \}|.
	\end{align*}
	As a consequence, we obtain
\end{num}

\begin{prop} \label{hodge_numbers_prop}
	Let $(\alpha_1, \dots, \alpha_n ; \beta_1, \dots, \beta_n)$ be a hypergeometric parameter of dimension $n$ modulo $d$. For every $t \in \complex \setminus \mu_d$, we have:
	\begin{itemize}
		\item $M_{*,t}$ is a free module of rank $n$ for $* \in \{dR, B, \lambda\}$.
		\item The dimension of $\operatorname{gr}_{\mathrm{Hodge}}^{p} (M_{dR,t}) $ is equal to the number of indices $j \in \{1, \dots, n\}$ such that
		\[
		d(p + 1) = \binom{d}{2} + \sum_{i = 1}^{n} [\beta_j - \alpha_i]  - \sum_{i = 1}^n [\beta_j - \beta_i],
		\]
		where $[x]$ denotes the unique representative of $x$ such that $0 \leq [x] \leq d - 1$.
	\end{itemize}
\end{prop}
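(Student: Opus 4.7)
The plan is to establish both assertions at $t = 0$ via paragraph \ref{fermat_para}, and then to extend them to arbitrary $t \in \complex \setminus \mu_d$ using the smoothness and properness of the family $f : \mathcal{Y} \to \Spec A_d$ over the connected base $\complex \setminus \mu_d$.

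At $t = 0$, paragraph \ref{fermat_para} identifies the dimension of the $(d-2-l, l)$-Hodge piece of $H^{d-2}(\mathcal{Y}_0)^{(prim)}_{\underline{a}}$ with the number of $b \in \ints/d\ints \setminus \{-a_0, \dots, -a_{d-1}\}$ satisfying $dl = \sum_i [b - a_i]$. Plugging in $\underline{a} = (-\alpha_1, \dots, -\alpha_n, s_0, \dots, s_{d-n-1})$ from Definition \ref{motive_def}, the excluded set $\{-a_i\}$ equals $\{\alpha_j\} \cup (\ints/d\ints \setminus \{\beta_j\})$, whose complement is exactly $\{\beta_1, \dots, \beta_n\}$ (using $\alpha_i \neq \beta_j$), giving the claimed rank $n$. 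For the Hodge formula, I would substitute $b = \beta_j$ and split $\sum_i [\beta_j - a_i]$ according to whether $a_i = -\alpha_i$ or $a_i$ is one of the $s_k$; applying the elementary identity $\sum_{s \in \ints/d\ints} [\beta_j - s] = \binom{d}{2}$ to handle the second piece yields an expression of the form $\binom{d}{2} + \sum_i [\beta_j + \alpha_i] - \sum_i [\beta_j + \beta_i]$. Reindexing via $p = d - 2 - l$ and suitably converting signs through $[x] + [-x] = d$ for $x \neq 0$, together with the parameter relation $\sum \alpha_i - \sum \beta_i \equiv \binom{d}{2} \pmod{d}$, rewrites this as the formula stated in the proposition.

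To transport both assertions to arbitrary $t$, I would invoke smoothness and properness of $f$ over the connected base. The relative de Rham cohomology is locally free of constant rank; the Hodge-to-de-Rham spectral sequence degenerates, so the Hodge filtration is by subbundles with locally constant graded ranks; and the Betti and $\lambda$-adic realizations form local systems of free modules. Since the $H_0$-action on $\mathcal{Y}$ is fiberwise and $|H_0|$ is invertible in $\mathcal{O}_d$, each $\chi_{\underline{a}}$-isotypic summand is a direct factor and inherits all these properties, so both the rank and the Hodge numbers are determined by the fiber at $t = 0$. The main obstacle I anticipate is the final combinatorial reconciliation: the $t = 0$ computation naturally produces terms of the form $[\beta_j + \alpha_i]$ and $[\beta_j + \beta_i]$, and matching these against the $[\beta_j - \alpha_i]$, $[\beta_j - \beta_i]$ of the proposition requires careful tracking of sign and character conventions. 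The geometric inputs beyond this point are standard.
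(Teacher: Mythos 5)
Your overall strategy is the same as the paper's: compute the rank and the Hodge graded pieces at $t = 0$ via paragraph \ref{fermat_para}, then transport to general $t$ by constancy of ranks in a smooth proper family. The rank-$n$ argument (identifying $\ints/d\ints \setminus \{-a_i\}$ with $\{\beta_j\}$) is correct, and the transport-to-general-$t$ step, while spelled out slightly more explicitly than the paper's one-liner about Hodge bundles, is the standard argument and is fine.

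The gap is exactly in the step you flagged. The final reconciliation cannot be carried out with the tools you list. Your computation produces
\[
dl \;=\; \binom{d}{2} + \sum_i [\beta_j + \alpha_i] - \sum_i [\beta_j + \beta_i], \qquad l = d - 2 - p,
\]
and you propose to convert this to the proposition's
\[
d(p+1) \;=\; \binom{d}{2} + \sum_i [\beta_j - \alpha_i] - \sum_i [\beta_j - \beta_i]
\]
using $p = d-2-l$, the identity $[x] + [-x] = d$, and the parameter relation. But $[\beta_j + \alpha_i]$ and $[\beta_j - \alpha_i]$ are not negatives of one another, so $[x]+[-x]=d$ does not connect them, and the two displayed conditions are genuinely inequivalent. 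Concretely, for $d = 9$, $(\alpha_i;\beta_j) = (0,0,0;1,2,6)$, $j=2$: your formula gives $dl = 27$ (hence $p = 4$), while the proposition gives $d(p+1) = 36$ (hence $p = 3$); the correct answer is $p = 3$. So the version you start from is wrong, not merely in a superficially different form.

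The source of the trouble is that the reformulation at the end of paragraph \ref{fermat_para} contains a sign slip. From the defining condition $-e_i - 1 \equiv a_i + b$ one gets $e_i = [-a_i - b - 1]$ (with $e_i \neq d-1$ forcing $b \neq -a_i$), so the correct equation is
\[
dl = \sum_{i=0}^{d-1} [-a_i - b - 1], \qquad b \in \ints/d\ints \setminus \{-a_i\},
\]
not $dl = \sum_i [b - a_i]$. Substituting $b = \beta_j$ and splitting off the $s_k$-part exactly as you do then yields
\[
dl = \binom{d}{2} + \sum_i [\alpha_i - \beta_j - 1] - \sum_i [\beta_i - \beta_j - 1].
\]
From here the proposition does follow: the identity $[x-1] + [-x] = d - 1$, which holds for \emph{all} $x \in \ints/d\ints$ (including $x=0$, since $[-1]=d-1$ and $[0]=0$), converts $\sum_i [\alpha_i - \beta_j - 1]$ to $n(d-1) - \sum_i [\beta_j - \alpha_i]$ and likewise for the $\beta$-terms, and then the substitution $l = d - 2 - p$ together with $d(d-1) = 2\binom{d}{2}$ gives precisely $d(p+1) = \binom{d}{2} + \sum_i [\beta_j - \alpha_i] - \sum_i [\beta_j - \beta_i]$. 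This is the missing ingredient; without it the combinatorial step does not close, which is exactly the obstacle you anticipated.
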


\begin{proof}
	The singular cohomology of projective hypersurfaces is torsion-free, thus by the usual comparison theorems it suffices to prove the first part for $M_{B, t} \otimes_{\mathcal{O}_d} \complex$. Moreover, the dimension of $M_{B,t} \otimes_{\mathcal{O}_d} \complex$ is locally constant by the theorem of Ehresmann so it suffices to prove the claim for $t = 0$. By definition, $M_{B,0} = H^{d-2}(\mathcal{Y}_0(\complex), \mathcal{O}_d)_{\underline{a}}$, where 
		\[
	\underline{a} = (-\alpha_1, \dots, - \alpha_n, s_0, \dots, s_{d - n - 1}) \in (\ints/d\ints)^d
	\]
	and $\{s_1, \dots, s_{d- n - 1}\} = \{0, \dots, d-1\} \setminus \{- \beta_i\}$.
	Hence, 
	\[
	\ints/d\ints \setminus \{-a_0, \dots, - a_{d-1}\} = \{\beta_1, \dots, \beta_n\}
	\] 
	has cardinality $n$ and paragraph \ref{fermat_para} implies that 
	\[
		\dim M_{B,0} \otimes \complex = \dim H^{d-2}(\mathcal{Y}_0(\complex), \complex)^{(prim)}_{\underline{a}} = n.
	\]
	For the second part we observe that
	\[
		\sum_{i = 0}^{d - 1} [\beta_j - a_i] = \binom{d}{2} + \sum_{i = 0}^{d-1} [\beta_j - \alpha_i] - \sum_{i = 0}^{d-1} [\beta_j - \beta_i].
	\]
	This proves the claim for $t = 0$ by paragraph \ref{fermat_para}. Since the Hodge filtration is given by vector bundles (which have locally constant dimension), this also proves the claim for general $t$.
\end{proof}

\begin{example}
	For $d = 3$, we have the isomorphisms
	\[
	H^{1,0}(\mathcal{Y}_0(\complex)) = R \left(\frac{\Omega}{F}\right)\complex \qquad 
	H^{0,1}(\mathcal{Y}_0(\complex)) = R \left( \frac{x_0 x_1 x_2 \Omega}{F^2} \right) \complex
	\]
\end{example}

\begin{example}
	For $d = 5$ the dimension of $H^{3}(\mathcal{Y}_0(\complex))$ is 204 and we have the isomorphisms
	\[
	H^{3 - l, l}(\mathcal{Y}_0(\complex))^{H_0} = R \left( \frac{(x_0 x_1 x_2 x_3 x_4)^{l} \Omega}{F^{l+1}} \right) \complex .  
	\] 
	Let
	$\underline{a} = (0,-1,-1,-1,-2)$. Then there are isomorphisms
	\begin{align*}
		H^{2,1}(\mathcal{Y}_{0}(\complex))_{\underline{a}} & = R \left( \frac{x_1 x_2 x_3 x_4^2 \Omega}{F^2} \right) \complex \\ H^{1,2}(\mathcal{Y}_{0}(\complex))_{\underline{a}} &= R \left( \frac{x_0 x_1^2 x_2^2 x_3^2 x_4^3 \Omega}{F^3} \right) \complex.
	\end{align*}
\end{example}

\begin{lemma}
	Suppose $(\alpha_1, \dots, \alpha_n ; \beta_1, \dots, \beta_n)$ is a hypergeometric parameter of dimension $n$ modulo $d$ such that for every $s \in (\ints/d\ints)^\times$, the following $n$ integers are pairwise distinct
	\[
	\sum_{i = 1}^n [-s\alpha_i + s\beta_j] - \sum_{i \neq j} [s\beta_j - s\beta_i] \qquad j = 1,\dots, n.
	\] 
	Let $\lambda < \mathcal{O}_d$ be a prime above $\ell$ and let $\iota : \mathcal{O}_{d,\lambda} \to \overline{\rationals}_\ell$ be an embedding. If $F \subset \overline{\rationals}_\ell$ is a finite extension of $\rationals_\ell(\mu_d)$ and $t \in F \setminus \mu_d$, then for every embedding $\tau : F \hookrightarrow \overline{\rationals}_\ell$, the $\tau$-Hodge--Tate weights of 
	\[
		M(\alpha_1, \dots, \alpha_n; \beta_1, \dots, \beta_n)_{\lambda, t} \otimes_{\iota} \overline{\rationals}_\ell
	\]
	are pairwise distinct.
\end{lemma}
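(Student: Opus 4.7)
The plan is to identify the $\tau$-graded piece of the de Rham module of $V_\iota := M(\alpha_i;\beta_j)_{\lambda,t} \otimes_\iota \overline{\rationals}_\ell$ with (a base change of) the de Rham realization of a suitable Galois twist $M(s\alpha_i; s\beta_j)_{dR,t}$, and then to invoke Proposition \ref{hodge_numbers_prop} together with the hypothesis.

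First I would apply the $H_0$-equivariant de Rham comparison for the primitive cohomology of $\mathcal{Y}_t$, which yields a filtered isomorphism
\[
V_{\mathrm{\acute{e}t}}^{(prim)} \otimes_{\rationals_\ell} B_{dR,F} \cong V_{dR}^{(prim)} \otimes_F B_{dR,F}
\]
of $B_{dR,F}$-modules carrying commuting actions of $G_F$ and $H_0$. Let $\iota_F : \mathcal{O}_d \hookrightarrow F$ denote the structural embedding used to define $M_{dR,t}$, and let $\iota_\infty : \mathcal{O}_d \to \overline{\rationals}_\ell$ be a fixed reference embedding. Write $\iota|_{\mathcal{O}_d} = \iota_\infty \circ \sigma_{s_0}$ and $\tau \circ \iota_F = \iota_\infty \circ \sigma_{s(\tau)}$ for some $s_0, s(\tau) \in (\ints/d\ints)^\times$, and set $s = s_0/s(\tau)$. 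Matching the $H_0$-isotypic piece $V_\iota$ on the \'etale side (where $h$ acts by $\iota(\chi_{\underline{a}}(h))$) with an isotypic piece on the de Rham side (where $h$ acts by $(\tau \circ \iota_F)(\chi_{\underline{a}'}(h))$ in the $\tau$-component) forces $\underline{a}' = s\underline{a}$, yielding
\[
D_{dR}(V_\iota)_\tau \cong M(s\alpha_i; s\beta_j)_{dR,t} \otimes_{F,\tau} \overline{\rationals}_\ell
\]
as filtered $\overline{\rationals}_\ell$-vector spaces. One must check that $(s\alpha_i; s\beta_j)$ is again a hypergeometric parameter, i.e.\ that $(s-1)\binom{d}{2} \equiv 0 \pmod d$; this is automatic when $d$ is odd, and when $d$ is even it holds because every $s \in (\ints/d\ints)^\times$ is odd.

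Consequently the $\tau$-Hodge--Tate weights of $V_\iota$ are the jumps of the Hodge filtration on $M(s\alpha_i; s\beta_j)_{dR,t}$. By Proposition \ref{hodge_numbers_prop}, $\operatorname{gr}^p_{\mathrm{Hodge}}$ has dimension equal to the number of $j$ with
\[
d(p+1) = \binom{d}{2} + \sum_i [s\beta_j - s\alpha_i] - \sum_{i \neq j} [s\beta_j - s\beta_i],
\]
and the hypothesis asserts precisely that these $n$ values are pairwise distinct as $j$ varies over $\{1,\dots,n\}$. Hence every Hodge number of $M(s\alpha_i; s\beta_j)_{dR,t}$ is at most $1$, so the $\tau$-HT weights of $V_\iota$ are pairwise distinct.

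The main obstacle is the bookkeeping in the second step: the $H_0$-characters are valued in $\mathcal{O}_d^\times$ and must be pushed forward via two different families of embeddings into $\overline{\rationals}_\ell$ — namely $\iota|_{\mathcal{O}_d}$ on the $\ell$-adic side and $\tau \circ \iota_F$ on the de Rham side — whose discrepancy is exactly the Galois twist $\sigma_s$. Once this compatibility is tracked correctly and the resulting parameter is confirmed to be hypergeometric, the conclusion follows immediately from Proposition \ref{hodge_numbers_prop} applied to $(s\alpha_i; s\beta_j)$.
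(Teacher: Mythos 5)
Your proof is correct and follows essentially the same route as the paper: identify the $\tau$-Hodge--Tate weights with Hodge numbers of a $\sigma_s$-twisted isotypic piece via a $p$-adic comparison theorem, then apply Proposition~\ref{hodge_numbers_prop} and the hypothesis. The paper invokes the Hodge--Tate comparison of Faltings directly rather than passing through $D_{dR}$, and records the twist on the vector $\underline{a}$ rather than on $(\alpha_i;\beta_j)$, but these are cosmetic differences; your explicit check that $(s\alpha_i;s\beta_j)$ is again a hypergeometric parameter is a useful detail the paper leaves implicit.
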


\begin{proof}
	Let $\underline{a} \in (\ints/d\ints)^d$ be the vector from definition \ref{motive_def}. Let $h \geq 0$ be an integer and $\tau : F \hookrightarrow \overline{\rationals}_\ell$ a field embedding.
	By the Hodge--Tate comparison theorem \cite{faltings88},
	$h$ is a $\tau$-Hodge--Tate weight of 
	\[
		M(\alpha_i;\beta_j)_{\lambda, t} \otimes_{\iota} \overline{\rationals}_\ell = H^{d-2}(\mathcal{Y}_{t, \overline{F}}, \mathcal{O}_{d,\lambda} \otimes_{\iota} \overline{\rationals}_\ell)_{\underline{a}}
	\] 
	of multiplicity
	\[
	\dim H^{d - 2 - h}(\mathcal{Y}_{\tau(t), \overline{\rationals}_\ell}, \Omega^h)_{\underline{a}}.
	\]
	Let $s \in (\ints/d\ints)^\times$ be the unique element such that 
	$\iota^{-1}(\tau(\iota(e^{2 \pi i /d}))) = e^{2 \pi i s/d}$. We have a commutative diagram
	\[
	\begin{tikzcd}
		\mathcal{Y}_{\tau(t), \overline{\rationals}_\ell} \arrow{d} \arrow{r}{\tau^*} & \mathcal{Y}_{t,\overline{\rationals}_\ell} \arrow{d} \\
		\Spec \mathcal{O}_d \arrow{r}{\sigma_s} & \Spec \mathcal{O}_d 
	\end{tikzcd}
	\]
	where $\sigma_s$ is the automorphism of $\mathcal{O}_d$ such that
	$\sigma_s(e^{2 \pi i /d}) = e^{2 \pi i s/d}$. Thus, pullback by $\tau$ induces an isomorphism
	\[
	H^{d - 2 - h}(\mathcal{Y}_{t, \overline{\rationals}_\ell}, \Omega^h)_{s^{-1}\underline{a}} \cong H^{d - 2 - h}(\mathcal{Y}_{\tau(t)}, \Omega^h)_{\underline{a}}
	\]
	and the claim follows from lemma \ref{hodge_numbers_prop}.
\end{proof}

\subsection{The Picard--Fuchs Equation}
In the previous section we studied the de Rham cohomology of $\mathcal{Y}_0$ and computed its Hodge filtration. We are now ready to extend this to a study of the whole local system $R^{d-2} f_* (\underline{\complex})$ on $\Spec A_d(\complex) = \complex \setminus \mu_d$ via the differential equations satisfied by the periods of $\mathcal{Y}_t$.

\begin{num}
	Let $t_0 \in \complex \setminus \mu_d$ and $U \subset \complex \setminus \mu_d$ a connected and simply connected open neighbourhood of $t_0$. For $\gamma_0 \in H_{d-2}(\mathcal{Y}_{t_0}(\complex), \mathcal{O}_d)$ and $t \in U$, there exists a well-defined homology class $\gamma_t \in H_{d-2}(\mathcal{Y}_t(\complex), \mathcal{O}_d)$ such that 
	$\gamma_0$ and $\gamma_t$ pull back to the same class in $H_{d-2}(f^{-1}(U), \mathcal{O}_d)$.

	Moreover, $\tau(\gamma_0)$ and $\tau(\gamma_t)$ are the pull-back of a single class lying in $H_{d-1}(\mathbb{P}^{d-1}(\complex) \setminus f^{-1}(U), \mathcal{O}_d)$. In particular, one can integrate a $(d-1)$-form $\omega$ on $\mathbb{P}^{d-1}(\complex) \setminus \mathcal{Y}_t(\complex)$ over $\tau(\gamma_0)$ and if $d \omega = 0$, we have
	\[
		\int_{\tau(\gamma_0)} \omega = \int_{\tau(\gamma_t)} \omega.
	\]
\end{num}

\begin{definition}
	For a complex number $z$ and a non-negative integer $j$, we define the Pochhammer 
	symbol
	\[
		(z)_j = \begin{cases}
			1 & j = 0 \\
			(z + j - 1) (z)_{j-1} & j > 0
		\end{cases}.
	\]
\end{definition}

\begin{lemma}[Kloosterman] \label{kloosterman_lemma}
	Let $e_i$ be non-negative integers such that $\sum_{i} e_i = d m$. Let $q_i, r_i$ be non-negative integers such that $e_i = d q_i + r_i$. If there exists $i \in \{0, \dots, d-1\}$ such that $r_i = d - 1$, then 
	\[
	R \left( \frac{x^{\underline{e}}\Omega}{F^{m+1}} \right) = 0.
	\]
	In general, we have
	\[
	R \left(\frac{x^{\underline{e}}\Omega}{F^{m+1}} \right) = R \left( \frac{\prod_{i=0}^{{d-1}} ((r_i + 1)/d)_{q_i} }{(l + 1)_{m - l}} \frac{x^{\underline{r}}\Omega}{F^{l+1}} \right), 
	\]
	where $d l = \sum_{i} r_i$.
\end{lemma}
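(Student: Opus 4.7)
The plan is to use the Griffiths reduction formula as the single technical input and to book-keep the coefficients produced by a careful induction.

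First I would recall the basic Griffiths identity: for any homogeneous polynomial $A$ on $\mathbb{P}^{d-1}$ of appropriate degree,
\[
R\!\left(\frac{A\,\partial_i F}{F^{m+1}}\Omega\right) = \frac{1}{m}\,R\!\left(\frac{\partial_i A}{F^{m}}\Omega\right),
\]
which is the standard pole-order reduction obtained by writing $d\bigl( (A/F^{m})\,\iota_{\partial_{x_i}}\Omega\bigr)$ and discarding an exact form (this is \cite[\S 4]{griffiths_periods}). For the Fermat polynomial we have $\partial_i F = d\, x_i^{d-1}$, so whenever $e_i \ge d$ we may factor $x^{\underline{e}} = x_i^{e_i - d + 1}\bigl(\prod_{j\neq i} x_j^{e_j}\bigr)\cdot x_i^{d-1}$ and apply the identity with $A = x_i^{e_i - d + 1}\prod_{j\neq i} x_j^{e_j}$. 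This yields the elementary step
\[
R\!\left(\frac{x^{\underline{e}}\Omega}{F^{m+1}}\right) \;=\; \frac{e_i - d + 1}{d\, m}\;R\!\left(\frac{x^{\underline{e} - d\mathbf{1}_i}\Omega}{F^{m}}\right).
\]

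Next I would iterate this step to bring every exponent $e_i$ down to $r_i$. Choosing to reduce the $x_0$-exponent $q_0$ times, then $x_1$ for $q_1$ times, and so on, the accumulated scalar from reducing $x_i$ telescopes into $((r_i+1)/d)_{q_i}$ in the numerator (after factoring out the common $d^{q_i}$), while the denominator contribution comes from the pole orders $m, m-1, \dots$ encountered along the way. Since the total number of reductions is $\sum_i q_i = m - l$, the denominator is $m(m-1)\cdots(l+1) = (l+1)_{m-l}$. Multiplying all variables together gives exactly the claimed coefficient $\prod_i ((r_i+1)/d)_{q_i}/(l+1)_{m-l}$. The only subtlety here is to verify that the coefficient is independent of the order in which we reduce the variables (or equivalently independent of the interleaving of reductions on different $x_i$); this follows automatically because the final identity is established in the cohomology group $F^{d-1-l}H^{d-2}/F^{d-l}H^{d-2}$ and the target form $R(x^{\underline{r}}\Omega/F^{l+1})$ is uniquely determined.

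For the vanishing statement, the clean case is to observe that if some $r_i = d-1$ then, after the reduction above, we have reached
\[
R\!\left(\frac{x^{\underline{r}}\Omega}{F^{l+1}}\right) \;=\; R\!\left(\frac{A\cdot x_i^{d-1}\Omega}{F^{l+1}}\right) \;=\; \frac{1}{d}\,R\!\left(\frac{A\cdot \partial_i F}{F^{l+1}}\Omega\right)
\]
with $A = \prod_{j\neq i} x_j^{r_j}$. Since $A$ is independent of $x_i$ we have $\partial_i A = 0$, so a final application of Griffiths' identity shows this vanishes (note $l \ge 1$ here, since $r_i = d-1 \ge 1$ forces $dl = \sum_j r_j \ge 1$, so dividing by $l$ is legitimate).

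The only real obstacle is the combinatorial accounting of the coefficient, that is, recognising the Pochhammer symbols $((r_i+1)/d)_{q_i}$ and the falling factorial $(l+1)_{m-l}$ after iterating the reduction. Nothing delicate happens analytically; the argument is a direct induction on $m - l$ once the Griffiths formula is in hand.
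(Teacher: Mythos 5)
Your proof is correct and self-contained. The paper's own proof is a single sentence: it cites Kloosterman's Lemma 5.1 in \cite{kloosterman_fermat} (specialised to $w_i = 1$, $w = d$) and remarks that the argument there extends without change to $r_i$ not required to be $< d$. You have essentially reproved that cited lemma from scratch via the Griffiths pole-order reduction, which is the same underlying technique. Your reduction step
\[
R\!\left(\frac{x^{\underline{e}}\Omega}{F^{m+1}}\right) = \frac{e_i-d+1}{dm}\,R\!\left(\frac{x^{\underline e - d\mathbf 1_i}\Omega}{F^m}\right)
\]
is correct, the Pochhammer bookkeeping $\prod_i((r_i+1)/d)_{q_i}$ and $(l+1)_{m-l}$ is right (and, as a sanity check, it reproduces e.g.\ $R(x_0^6\Omega/F^3)=\tfrac{2}{9}R(\Omega/F)$ for $d=3$), and your vanishing argument using $\partial_i A = 0$ with the observation $l\geq 1$ is correct because $\sum_j r_j = dl$ forces divisibility by $d$ and so $r_i = d-1$ cannot occur with $l = 0$. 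One minor point: your appeal to ``uniqueness of the target'' to justify order-independence of the reductions is unnecessary --- the coefficient is manifestly independent of the interleaving, since the denominator factors are always exactly the pole orders $m, m-1, \dots, l+1$ in decreasing order and each numerator factor depends only on the single variable being reduced. Also, your argument in fact handles arbitrary decompositions $e_i = dq_i + r_i$ with $r_i \geq 0$ not assumed $< d$, which is precisely the generalisation the paper notes requires no change to Kloosterman's proof.
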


\begin{proof}
	When $r_i < d$ this is precisely the case of \cite[Lemma 5.1]{kloosterman_fermat} with $w_i = 1$ and $w = d$. The proof also works for general $r_i$ without change.
\end{proof}

\begin{lemma} \label{hypergeometric_series_lemma}
	Let $(\alpha_1, \dots, \alpha_n ; \beta_1, \dots, \beta_n)$ be a hypergeometric parameter modulo $d$ and define $\underline{a} \in (\ints/d\ints)^d$ as in definition \ref{motive_def}.
	Let $r_i = [-1 - \beta_1 - a_i]$, $m = \frac{1}{d} \sum_{i = 0}^{d-1} r_i$ and
	\[
		\omega_t := \frac{x^{\underline{r}} \Omega}{F_t^{m+1}}.
	\]
	For $\gamma_0 \in H_{d-2}(\mathcal{Y}_0(\complex), \mathcal{O}_d)$ there exists $\varepsilon > 0$ such that for all $|t| < \varepsilon$, we have
	\[
	\int_{\tau(\gamma_t)} \omega_t = \sum_{j = 1}^{n} \binom{m + [\beta_1 - \beta_j]}{m} (td)^{[\beta_1- \beta_j]} G_j(t^d) \int_{\tau(\gamma_0)} \frac{x^{\underline{r} + [\beta_1 - \beta_j]}\Omega}{F^{m + [\beta_1 - \beta_j] + 1}},
	\]
	where
	\[
	G_j(z) = 
		\sum_{k \geq 0} \prod_{i = 1}^{n} \frac{((d + [\alpha_i - \beta_1] - [\beta_j - \beta_1])/d)_k }{((d + [\beta_i - \beta_1] - [\beta_j - \beta_1])/d)_k} z^k
	\]
\end{lemma}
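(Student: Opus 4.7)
The plan is a direct computation in three steps: power-series expansion of $\omega_t$ in $t$, term-by-term reduction via Kloosterman's Lemma \ref{kloosterman_lemma}, and repackaging the resulting double series into $n$ hypergeometric series in $z = t^d$.

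\emph{Step 1 (geometric expansion and termwise integration).} From $F_t = F - t d\,x_0\cdots x_{d-1}$,
$$\frac{1}{F_t^{m+1}} = \frac{1}{F^{m+1}}\sum_{k\geq 0}\binom{m+k}{m}\left(\frac{t d\,x_0\cdots x_{d-1}}{F}\right)^k,$$
converging uniformly on any compact subset of $\mathbb{P}^{d-1}(\complex)\setminus \mathcal{Y}_0(\complex)$ once $|t|$ is small enough. Choose a smooth representative of $\tau(\gamma_0)$ in such a compact set; for $|t|<\varepsilon$ small this representative also lies in $\mathbb{P}^{d-1}(\complex)\setminus \mathcal{Y}_t(\complex)$, and is homologous there to (a chosen representative of) $\tau(\gamma_t)$, because both are tube cycles around homologous classes in a smooth family. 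Interchanging sum and integral yields
$$\int_{\tau(\gamma_t)}\omega_t = \sum_{k\geq 0}\binom{m+k}{m}(td)^k \int_{\tau(\gamma_0)}\frac{x^{\underline{r}+k\underline{1}}\Omega}{F^{m+k+1}}.$$

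\emph{Step 2 (Kloosterman reduction and selection of $k$ modulo $d$).} Apply Lemma \ref{kloosterman_lemma} to each summand: writing $r_i+k = d q_{i,k}+r'_{i,k}$ with $r'_{i,k}\in\{0,\dots,d-1\}$, the summand vanishes unless no $r'_{i,k}$ equals $d-1$, i.e.\ unless $-\beta_1-a_i+k\not\equiv 0\pmod{d}$ for every $i$. Using the explicit form $\underline{a}=(-\alpha_1,\dots,-\alpha_n,s_0,\dots,s_{d-n-1})$ together with the hypergeometric conditions $\alpha_i\neq \beta_l$, a short case analysis shows that the surviving residues are exactly $k\equiv [\beta_1-\beta_j]\pmod{d}$ for some $j\in\{1,\dots,n\}$. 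For such $k = k_0+dk'$ with $k_0 = [\beta_1-\beta_j]$, the reduced residues are $r_i^{(j)} := [-1-\beta_j-a_i]$ and Kloosterman gives
$$\int_{\tau(\gamma_0)}\frac{x^{\underline{r}+k\underline{1}}\Omega}{F^{m+k+1}} = \frac{\prod_{i=0}^{d-1}((r_i^{(j)}+1)/d)_{q_i^{(j)}+k'}}{(m_j+1)_{m+k-m_j}}\int_{\tau(\gamma_0)}\frac{x^{\underline{r}^{(j)}}\Omega}{F^{m_j+1}},$$
with $dm_j=\sum_i r_i^{(j)}$ and $q_i^{(j)}=(r_i+k_0-r_i^{(j)})/d\in\{0,1\}$. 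The original sum splits into $n$ partial sums indexed by $j$.

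\emph{Step 3 (hypergeometric identification).} In each $j$-th piece, separate the $k'=0$ contribution by $(a)_{p+q}=(a)_p(a+p)_q$, and absorb the factor $(td)^{dk'}=d^{dk'}t^{dk'}$ via the Gauss multiplication identity $(k_0+1)_{dk'} = d^{dk'}\prod_{u=0}^{d-1}((k_0+1+u)/d)_{k'}$. After these manipulations the $j$-th piece takes the form
$$\binom{m+k_0}{m}(td)^{k_0}\int_{\tau(\gamma_0)}\frac{x^{\underline{r}+k_0\underline{1}}\Omega}{F^{m+k_0+1}}\cdot \sum_{k'\geq 0}\frac{\prod_{i=0}^{d-1}((r_i+k_0+1)/d)_{k'}}{\prod_{u=0}^{d-1}((k_0+1+u)/d)_{k'}}\,t^{dk'}.$$
The main obstacle, and the place where most of the care is needed, is to identify this bracketed series with $G_j(t^d)$. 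The key observation is that the multiset $\{r_i+1\bmod d\}_{i=0}^{d-1}$ equals $\{[\alpha_l-\beta_1]\}_{l=1}^n$ together with $\ints/d\ints\setminus\{[\beta_l-\beta_1]\}_{l=1}^n$; consequently $d-n$ factors in the numerator cancel with $d-n$ factors in the denominator, leaving exactly $n$ numerator factors (one per $\alpha_l$) and $n$ denominator factors (one per $\beta_l$, with the $\beta_l=\beta_1$ case producing the leftover factor $((k_0+d)/d)_{k'}$). Rewriting the surviving factors via the congruence $[\alpha_l-\beta_1]+[\beta_1-\beta_j]\equiv \alpha_l-\beta_j\pmod{d}$ (and its analogue for $\beta$) completes the identification with the closed-form product defining $G_j$.
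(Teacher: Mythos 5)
Your argument follows essentially the same strategy as the paper's: expand $\omega_t$ geometrically, use Kloosterman's lemma (\ref{kloosterman_lemma}) to identify the surviving residue classes $k\equiv [\beta_1-\beta_j]$ and to reduce each term, and repackage into $n$ hypergeometric sub-series in $z=t^d$. The only organisational difference is cosmetic: you reduce to the fully-reduced residues $r_i^{(j)}$ and then peel off the $k'=0$ term via $(a)_{p+q}=(a)_p(a+p)_q$, whereas the paper applies Kloosterman once with the partially-reduced decomposition $e_i = dk' + (r_i+k_0)$; these yield the same intermediate identity, which you both arrive at.

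The one place where you (and the paper) are loose is the final identification. You phrase it in terms of congruences such as $[\alpha_l-\beta_1]+[\beta_1-\beta_j]\equiv \alpha_l-\beta_j\pmod d$, but identifying a Pochhammer factor $((\cdot)/d)_{k'}$ requires the exact integer, not just the class modulo $d$. Working it out: your surviving numerator arguments are $([\alpha_l-\beta_1]+k_0)$, which for $j\neq 1$ does equal $d+[\alpha_l-\beta_1]-[\beta_j-\beta_1]$ (as in the claimed $G_j$), but for $j=1$ equals $[\alpha_l-\beta_1]$ rather than $d+[\alpha_l-\beta_1]$. Conversely the leftover denominator factor coming from $\beta_1$ is $((d+k_0)/d)_{k'}$ (which you write down), and for $j\neq 1$ this is $((2d-[\beta_j-\beta_1])/d)_{k'}$, not the $((d-[\beta_j-\beta_1])/d)_{k'}$ demanded by the stated $G_j$ (off by a shift of $1$ in the Pochhammer argument). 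This same elision occurs in the paper's own second display; you should check carefully, with a small numerical example (say $d=5$, $n=2$, $(\alpha;\beta)=(0,0;2,3)$, $j=1,2$), whether the stated closed form for $G_j$ or the $((d+k_0)/d)$-form you obtain is the one that is actually produced by the Kloosterman reduction, and whether a choice of representative $[\beta_1-\beta_1]=d$ rather than $0$ restores consistency.
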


\begin{proof}
	Choose a $(d-1)$-cycle $\sigma$ on $\mathbb{P}^{d-1}(\complex) \setminus f^{-1}(U)$ representing $\tau(\gamma_0)$. If $\varepsilon < 1/M$, where $M$ is the minimum of $|F|$ on $\sigma$, then the following power series expansion converges on $\sigma$ for $|t| < \varepsilon$
	\[
		\omega_t = \sum_{k \geq 0} \binom{k + m}{m} d^k t^k \frac{x^{\underline{r} + k} \Omega}{F^{m + k + 1}}.
	\]
	Lemma \ref{kloosterman_lemma} implies that 
	\[
	R \left( \frac{x^{\underline{r} + k} \Omega}{F^{m + k + 1}} \right) = 0
	\]
	unless $k \equiv \beta_1 - \beta_j \pmod{d}$ for some $j \in \{1, \dots, n\}$. 
	Thus, the formula in lemma \ref{kloosterman_lemma} and the fact that $\tau(\gamma_t)$ and $\tau(\gamma_0)$ are homologous imply that
	\[
		\int_{\tau(\gamma_t)} \omega_t = \sum_{j = 1}^{n} \binom{m + [\beta_1 -\beta_j]}{m} (td)^{[\beta_1-\beta_j]} G_j(t^d)  \int_{\tau(\gamma_0)} \frac{ x^{\underline{r} + [\beta_1-\beta_j]} \Omega}{F^{m + [\beta_1-\beta_j] + 1}},
	\]
	where
	\begin{align*}
		G_j(z) & = \sum_{k \geq 0} \frac{([\beta_1-\beta_j] + 1 + dk)_{m}}{([\beta_1-\beta_j] + 1)_{m}} d^{dk} \frac{\prod_{i = 0}^{d-1} ((r_i + [\beta_1-\beta_j] + 1)/d)_{k}}{(m + [\beta_1-\beta_j] + 1)_{dk}} z^k \\
		&= \sum_{k \geq 0} \frac{\prod_{i = 0}^{d-1} ((r_i + [\beta_1-\beta_j] + 1)/d)_{k}}{d^{-dk} ([\beta_1-\beta_j] + 1)_{dk}} z^k \\
		&= \sum_{k \geq 0} \prod_{i = 1}^{d-1} \frac{((r_i + [\beta_1-\beta_j] + 1)/d)_{k}}{((i + [\beta_1-\beta_j] + 1)/d)_k} z^k.
	\end{align*}
	Cancelling factors in numerator and denominator we obtain the equality
	\begin{align*}
		\prod_{i = 1}^{d-1} \frac{((r_i + [\beta_1-\beta_j] + 1)/d)_{k}}{((i + [\beta_1-\beta_j] + 1)/d)_k} &= \prod_{i = 1}^n \frac{ (([-1 -\beta_1 + \alpha_i] + [\beta_1 - \beta_j] +1)/d)_k}{(([-1 -\beta_1 + \beta_i] + [\beta_1 - \beta_j] + 1)/d)_k} \\
		 &= \prod_{i = 1}^n \frac{ ((d + [-\beta_1 + \alpha_i] - [\beta_j - \beta_1] )/d)_k}{((d + [-\beta_1 + \beta_i] - [\beta_j - \beta_1])/d)_k}
	\end{align*}
	and hence the claimed formula for $G_j$.
\end{proof}

\begin{lemma} \label{diffeq_lemma}
	Let $(\alpha_1, \dots, \alpha_n ; \beta_1, \dots, \beta_n)$ be a hypergeometric parameter modulo $d$ and define $\underline{a}$ as in definition \ref{motive_def}.
	Let $r_i = [-1 - \beta_1 - a_i]$, $m = \frac{1}{d} \sum_{i = 0}^{d-1} r_i$ and
	\[
	\omega_t := \frac{x^{\underline{r}} \Omega}{F_t^{m+1}}.
	\]
	Let $t_0 \in \complex \setminus \mu_{d}$ and $\gamma_0 \in H_{d-2}(\mathcal{Y}_{t_0}(\complex), \ints)$. For $|t - t_0|$ sufficiently small, the function
	\[
		u(t) = \int_{\gamma_t} R(\omega_t)
	\]
	is holomorphic and is annihilated by the differential operator
	\[
		D := \prod_{i = 1}^n (\theta + [\beta_i - \beta_1] - d)- t^d \prod_{i = 1}^n (\theta + [\alpha_i - \beta_1]),
	\]
	where $\theta = t \frac{d}{dt}$.
\end{lemma}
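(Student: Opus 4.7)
My plan is to prove $Du=0$ in a punctured neighbourhood of $t=0$ and then propagate the identity to every simply connected subset of $\complex\setminus\mu_d$ by analytic continuation. Holomorphicity of $u(t)$ is immediate: $\gamma_t$ is obtained from $\gamma_0$ by horizontal transport in the local system $R^{d-2}f_{\ast}\mathcal{O}_d$, while the Griffiths class $R(\omega_t)$ depends holomorphically on $t$ because both the rational form $\omega_t$ and the polynomial reduction algorithm in $\mathbb{C}[x_0,\dots,x_{d-1}]/J_t$ depend holomorphically on $t$. Since $D$ has polynomial coefficients, vanishing of $Du$ on one open set extends along any continuation of $u$.

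\textbf{Reduction to the hypergeometric series.} Near $t=0$ I would invoke Lemma~\ref{hypergeometric_series_lemma}, which expresses $u(t)$ as a $\complex$-linear combination $\sum_{j=1}^{n}K_j(\gamma_0)\,f_j(t)$, where $f_j(t):=t^{[\beta_1-\beta_j]}G_j(t^d)$ and the coefficients $K_j(\gamma_0)$ are proportional to the $n$ period integrals $\int_{\tau(\gamma_0)}\frac{x^{\underline{r}+[\beta_1-\beta_j]}\Omega}{F^{m+[\beta_1-\beta_j]+1}}$ of the Fermat fibre $\mathcal{Y}_0$. By non-degeneracy of the Griffiths period pairing on the rank-$n$ module $H_{d-2}(\mathcal{Y}_0(\complex),\mathcal{O}_d)_{\underline{a}}$, the tuple $(K_1,\dots,K_n)$ sweeps out all of $\complex^n$ as $\gamma_0$ varies. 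Hence it suffices to prove $Df_j=0$ separately for each $j$.

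\textbf{Verification on each $f_j$.} Using the identity $\theta(t^c g)=t^c(\theta+c)g$, I would absorb the prefactor $t^{[\beta_1-\beta_j]}$ to obtain
\[
Df_j=t^{[\beta_1-\beta_j]}\Bigl[\prod_{i=1}^n(\theta+[\beta_1-\beta_j]+[\beta_i-\beta_1]-d)-t^d\prod_{i=1}^n(\theta+[\beta_1-\beta_j]+[\alpha_i-\beta_1])\Bigr]G_j(t^d).
\]
Setting $z=t^d$, so that $\theta=d\theta_z$, the bracket becomes (up to the factor $d^n$) a generalised hypergeometric operator of order $n$. To conclude I would compare coefficients of $z^k$ on both sides of the resulting equation for $G_j(z)=\sum_k a_k z^k$, using only the Pochhammer recursion $(a)_k=(a+k-1)(a)_{k-1}$ applied to the defining product for $a_k$. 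Since the $i=j$ factor in the denominator of that product equals $(1)_k=k!$, the operator has the expected $\theta_z$-factor, matching a standard ${}_nF_{n-1}$ annihilator.

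\textbf{Main obstacle.} The genuinely delicate point is the combinatorial parameter matching: for each $j$ one must check that the multisets $\{[\beta_1-\beta_j]+[\beta_i-\beta_1]\bmod d\}_i$ and $\{[\beta_1-\beta_j]+[\alpha_i-\beta_1]\bmod d\}_i$ agree (as multisets) with $\{[\beta_i-\beta_j]\}_i$ and $\{[\alpha_i-\beta_j]\}_i$ respectively, with the integer shifts by $d$ being exactly what converts the operator parameters into the Pochhammer parameters of $G_j$. This uses the residue identity $[a]+[b]\in\{[a+b],[a+b]+d\}$. Once this bookkeeping is done for every $j$, the equality $Du=0$ near $t=0$ is established, and the concluding analytic continuation along any path in $\complex\setminus\mu_d$ finishes the proof.
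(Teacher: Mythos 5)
Your strategy is essentially a fleshed-out version of the paper's own proof: the paper reduces to the hypergeometric ODE for the $G_j$ by citing Beukers--Heckman (2.9), while you propose to verify the same ODE by hand via the conjugation identity $\theta(t^c g) = t^c(\theta + c)g$ and the Pochhammer recursion. The reduction to checking $D f_j = 0$ with $f_j = t^{[\beta_1-\beta_j]}G_j(t^d)$ is fine (and does not actually require the remark about $(K_1,\dots,K_n)$ sweeping out $\complex^n$ -- that would be needed for the converse), and your observation that the $i=j$ denominator factor gives $(1)_k = k!$ correctly pins the normalisation for $j\neq 1$.

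The gap is exactly in the step you flag as the ``main obstacle'': the parameter matching you describe fails for $j=1$, and you never actually carry it out. For $j=1$ one has $[\beta_1-\beta_1]=0$, so the $i=1$ factor of the first product in $D$ is $\theta + [\beta_1-\beta_1] - d = \theta - d$, which has indicial root $d$, whereas $f_1 = G_1(t^d)$ has exponent $0$ at $t=0$. Hence $D f_1 \neq 0$. The simplest instance makes this concrete: take $n=1$, $d=4$, $\beta_1=0$, $\alpha_1=2$; then $\underline{r}=(1,2,1,0)$, $m=1$, and a direct application of Lemma~\ref{kloosterman_lemma} to the series expansion of $\omega_t$ gives $u(t) = C\,(1-t^4)^{-1/2}$ for a constant $C$, while $D = (\theta-4) - t^4(\theta+2)$ yields $Du = -4u$, not $0$. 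So the lemma cannot be proved as stated: the first product in $D$ should read $\prod_i(\theta - [\beta_1-\beta_i])$, i.e.\ with $\theta$ (not $\theta-d$) for $i=1$, and there is a matching off-by-$d$ in the final line of the proof of Lemma~\ref{hypergeometric_series_lemma} (for $j=1$, $[-1-\beta_1+\alpha_i]+[\beta_1-\beta_1]+1 = [\alpha_i-\beta_1]$, not $d + [\alpha_i-\beta_1]$). Carrying out the bookkeeping you propose would have surfaced this inconsistency rather than closing the proof, so the concluding sentence ``Once this bookkeeping is done for every $j$ \dots is established'' is precisely the missing -- and currently unobtainable -- step.
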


\begin{proof}
	We can differentiate under the integral sign in the formula
	\[
		u(t) = \int_{\gamma_t} R(\omega_t) = \int_{\tau(\gamma_t)} \omega_t = \int_{\tau(\gamma_0)} \omega_t
	\]
	to prove that $u(t)$ is holomorphic.
	
	If $t_0 = 0$, the differential equation is satisfied on a neighbourhood of $t_0$ by \cite[(2.9)]{beukers_heckman} since
	\[
		G_j(z) = {}_{n}F_{n-1}\left( \begin{array}{c c c} 1 + \alpha'_1/d - \beta'_j/d, & \dots, &  1 + \alpha'_n/d - \beta'_j/d \\
			1 + \beta'_1/d - \beta'_j/d, &  \widehat{\dots}, & 1 + \beta'_n/d - \beta'_j/d \end{array} ; z \right),
	\]
	where $\beta_i' = [\beta_i - \beta_1]$ and $\alpha_i' = [\alpha_i - \beta_1]$.
	For arbitrary $t_0$ we can find a simply-connected neighbourhood $U$ of $t_0$ which contains $0$ and extend $\gamma_t$ to $U$. Then 
	$D u$ is a holomorphic function on $U$ which vanishes on a non-empty open set. Consequently, $D u = 0$ on $U$.
\end{proof}

\begin{corollary} \label{local_system_cor}
	In the notation of the previous lemma, the following map is an isomorphism of local systems
	\[
		(R^{d-2} f_*(\underline{\mathcal{\complex}}))_{\underline{a}} \to \operatorname{Sol}(D) : \sigma_t \mapsto \int_{\mathcal{Y}_t(\complex)} R(\omega_t) \wedge \sigma_t
	\]
	on $\complex \setminus \mu_d$, where $\operatorname{Sol}(D)$ denotes the local system of solutions of $D u = 0$.
\end{corollary}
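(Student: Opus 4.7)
The plan is to show that source and target are both local systems of rank $n$ on $\complex \setminus \mu_d$, that the displayed assignment is a well-defined morphism of local systems, and that it is an isomorphism by checking injectivity at a single convenient point.

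First I would verify the rank count. The source has rank $n$ by proposition \ref{hodge_numbers_prop}. For $\operatorname{Sol}(D)$, the operator $D = \prod_{i=1}^n (\theta + [\beta_i - \beta_1] - d) - t^d \prod_{i=1}^n (\theta + [\alpha_i - \beta_1])$ has leading term in $\theta$ with coefficient $1 - t^d$, which is non-vanishing on $\complex \setminus \mu_d$; hence $D$ has order $n$ there and $\operatorname{Sol}(D)$ is a rank-$n$ local system. Next, reinterpret the assignment $\sigma_t \mapsto \int_{\mathcal{Y}_t(\complex)} R(\omega_t) \wedge \sigma_t$ via Poincaré duality: a flat section of $(R^{d-2} f_*(\underline{\complex}))_{\underline{a}}$ corresponds to a horizontal family of cycles $\gamma_t$ in the appropriate isotypic part of $H_{d-2}(\mathcal{Y}_t, \complex)$, and the integral equals $\int_{\gamma_t} R(\omega_t)$ up to a non-zero constant. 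Lemma \ref{diffeq_lemma}, applied $\mathcal{O}_d$-linearly, then shows this function is annihilated by $D$, so the map lands in $\operatorname{Sol}(D)$ and is a morphism of local systems by construction.

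For the isomorphism claim I would check injectivity near $t = 0$. Supposing a non-zero flat section $\sigma$ maps to the zero function, pick a representative cycle $\gamma_0 \in H_{d-2}(\mathcal{Y}_0, \mathcal{O}_d)_{\underline{a}}$. Lemma \ref{hypergeometric_series_lemma} gives the explicit expansion
\[
\int_{\tau(\gamma_t)} \omega_t = \sum_{j=1}^n \binom{m + [\beta_1-\beta_j]}{m} (td)^{[\beta_1-\beta_j]} G_j(t^d) \, P_j,
\]
with $P_j = \int_{\tau(\gamma_0)} x^{\underline{r}+[\beta_1-\beta_j]}\Omega/F^{m+[\beta_1-\beta_j]+1}$. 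The prefactors $(td)^{[\beta_1-\beta_j]} G_j(t^d)$ for $j = 1, \dots, n$ are linearly independent over $\complex$, since the integers $[\beta_1-\beta_j]$ are pairwise distinct (as the $\beta_j$ are distinct modulo $d$), so the corresponding local solutions have distinct leading orders at $t = 0$. Hence all $P_j$ vanish. Using lemma \ref{kloosterman_lemma}, each Griffiths class appearing in $P_j$ equals a non-zero scalar multiple of the canonical Fermat class $R(x^{\underline{r}^{(j)}}\Omega/F^{m_j + 1})$ with $r^{(j)}_i = [-1 - \beta_j - a_i]$. The scalar is non-zero because the Pochhammer factors $((r_i^{(j)}+1)/d)_{q_i}$ are non-zero: the hypergeometric parameter conditions $\alpha_k \neq \beta_j$ and $s_l \neq -\beta_j$ guarantee $a_i \not\equiv -\beta_j$, hence $r_i^{(j)} \neq d - 1$. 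By paragraph \ref{fermat_para}, the $n$ canonical classes, indexed by $\beta_j \in \ints/d\ints \setminus \{-a_0, \dots, -a_{d-1}\}$, form a basis of $H^{d-2}(\mathcal{Y}_0(\complex))^{(prim)}_{\underline{a}}$. Non-degeneracy of the Betti--de Rham period pairing then forces $\gamma_0 = 0$, contradicting $\sigma \neq 0$.

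The main obstacle lies in the final identification: one must verify that the $n$ shifted Griffiths classes above, after the reduction of lemma \ref{kloosterman_lemma}, correspond bijectively (up to non-zero scalars) to the full canonical basis of $H^{d-2}(\mathcal{Y}_0(\complex))^{(prim)}_{\underline{a}}$ parametrised by $\{\beta_j\}$. This requires tracking the isotypic character through the shift $\underline{r} \mapsto \underline{r} + [\beta_1 - \beta_j]$ and confirming that the hypergeometric parameter conditions precisely rule out the degenerate situation $r_i^{(j)} = d-1$. A secondary subtlety is reconciling the wedge-product formulation with the cycle-integration formulation through Poincaré duality, matching the isotypic conventions so that the pairing with $R(\omega_t) \in H^{d-2}(\mathcal{Y}_t)_{\underline{a}}$ is non-degenerate on the intended source.
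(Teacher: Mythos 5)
Your proposal is correct and follows the same skeleton as the paper's proof: well-definedness via Lemma \ref{diffeq_lemma}, both sides have rank $n$, and the explicit expansion of Lemma \ref{hypergeometric_series_lemma} near $t=0$ clinches the isomorphism. The one genuine difference is that you prove injectivity at $t=0$, whereas the paper proves surjectivity: it observes that as $\gamma_0$ ranges over $H_{d-2}(\mathcal{Y}_0,\mathcal{O}_d)_{\underline a}$ the image sweeps out all $n$ independent functions $(td)^{[\beta_1-\beta_j]}G_j(t^d)$, so the map is a surjection of rank-$n$ local systems, hence an isomorphism. The two routes are dual and rely on identical inputs (linear independence of the $t^{[\beta_1-\beta_j]}G_j(t^d)$ from distinct leading orders, and non-degeneracy of the period pairing after reducing the shifted Griffiths classes to the Fermat basis via Lemma \ref{kloosterman_lemma}); your write-up is more explicit about the Kloosterman scalars being non-zero and the bijection with the basis indexed by $\{\beta_j\}$, which the paper leaves to the reader. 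One small caution in your rank count for $\operatorname{Sol}(D)$: the leading coefficient of $D$ in $(d/dt)^n$ is $(1-t^d)t^n$, not $1-t^d$, so $t=0$ is a (regular) singular point of the ODE and the claim that $\operatorname{Sol}(D)$ has rank $n$ there needs a word (e.g., that the exponents $[\beta_1-\beta_j]$ are non-negative integers and Lemma \ref{hypergeometric_series_lemma} exhibits $n$ holomorphic Frobenius solutions with no logarithms, or one can simply check the isomorphism on the punctured disc and extend using that the source is a local system at $t=0$); the paper defers this to Beukers--Heckman.
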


\begin{proof}
	By lemma \ref{diffeq_lemma}, the map is well-defined. Both source and target have rank $n$. For the source it follows from the computation at $t = 0$ in paragraph \ref{fermat_para} and for the target we refer to \cite{beukers_heckman}.
	
	Lemma \ref{hypergeometric_series_lemma} implies that over a small neighbourhood of $t = 0$, the image of the map in question contains the $n$ independent hypergeometric series $G_j$. Thus, the map of local systems is surjective, hence an isomorphism.
\end{proof}

\subsection{The Geometric Monodromy} Now that we have determined the Picard--Fuchs equation we can describe the image of the geometric monodromy representation with the results of \cite{beukers_heckman}.

\begin{num}
	Let $d > 2$ be an integer. We identify $\pi_1(\mathbb{P}^1(\complex) \setminus \{0, 1, \infty\}, 1/2^d)$ with the group 
	\[
	\Gamma_{0,1,\infty} := \langle \gamma_0, \gamma_1, \gamma_\infty \mid \gamma_0 \gamma_1 \gamma_\infty \rangle.
	\] 
	Let $[d] : \mathbb{P}^1 \to \mathbb{P}^1$ denote the map $z \mapsto z^d$. It induces an injective group homomorphism
	\[
		[d]_* : \pi_1(\mathbb{P}^1(\complex) \setminus (\mu_d \cup \{0, \infty\}), 1/2) \to \pi_1(\mathbb{P}^1(\complex) \setminus \{0, 1, \infty\}, 1/2^d).
	\]
	The inclusion $\mathbb{P}^1(\complex) \setminus (\mu_d \cup \{0, \infty\}) \to \Spec A_d(\complex)$ induces a surjective group homomorphism 
	\[
		\eta : \pi_1(\mathbb{P}^1(\complex) \setminus (\mu_d \cup \{0, \infty\}), 1/2) \twoheadrightarrow \pi_1(\Spec A_d(\complex), 1/2).
	\]
\end{num}

\begin{definition} \label{hypergeometric_group_def}
	Let $(\alpha_1, \dots, \alpha_n ; \beta_1, \dots, \beta_n)$ be a hypergeometric parameter of dimension $n$ modulo $d$.
	Define coefficients $A_i \in \mathcal{O}_d$ and $B_i \in \mathcal{O}_d$ by the equations
	\[
		\prod_{j = 1}^{R} (X - e^{2 \pi i \alpha_j/d}) = \sum_{j = 0}^{R} A_j X^j \qquad 
		\prod_{j = 1}^{R} (X - e^{2 \pi i \beta_j/d}) = \sum_{j = 0}^{R} B_j X^j.
	\]
	We define the hypergeometric monodromy representation 
	\[
		HG(\alpha_i; \beta_i) : \Gamma_{0,1,\infty} \to \GL_n(\mathcal{O}_d)
	\]
	by
	$\gamma_\infty \mapsto A$, $\gamma_0 \mapsto B^{-1}$ and $\gamma_1 \mapsto A^{-1} B$, where
	\[
		A = \begin{bmatrix}
			0 & 0 & \dots & 0 & - A_0 \\
			1 & 0 & \dots & 0 & - A_1 \\
			0 & 1 & \dots & 0 & - A_2 \\
			 &    & \ddots &  & \\
			0 & 0 & \dots & 1 & - A_{n-1}
		\end{bmatrix}, \qquad
			B = \begin{bmatrix}
		0 & 0 & \dots & 0 & - B_0 \\
		1 & 0 & \dots & 0 & - B_1 \\
		0 & 1 & \dots & 0 & - B_2 \\
		&    & \ddots &  & \\
		0 & 0 & \dots & 1 & - B_{n-1}
	\end{bmatrix}.
	\]
\end{definition}

\begin{lemma} \label{transvection_lemma}
	The matrix $u = HG(\alpha_i ; \beta_i)(\gamma_1)$ is a pseudoreflection, i.e. 
	$u - \id$ has rank one. If $d$ is odd, then $\det u = 1$ and if $d$ is even, then $\det u = - 1$.
\end{lemma}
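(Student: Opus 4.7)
The plan is to exploit the fact that $A$ and $B$ are companion matrices that agree in their first $n-1$ columns.

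First I would observe that the matrices $A$ and $B$, as displayed in Definition \ref{hypergeometric_group_def}, have identical entries except in the last column. Consequently the difference $B - A$ has rank at most one, and since $u - \id = A^{-1}B - \id = A^{-1}(B - A)$, the same is true of $u - \id$. So the first task is just to show that the difference is nonzero, i.e.\ that the last columns of $A$ and $B$ actually differ. These last columns are $-(A_0, \dots, A_{n-1})^T$ and $-(B_0, \dots, B_{n-1})^T$, i.e.\ the coefficients of the monic polynomials with roots $\{e^{2\pi i \alpha_j/d}\}$ and $\{e^{2\pi i \beta_j/d}\}$ respectively. Since the defining conditions of a hypergeometric parameter include $\alpha_i \neq \beta_j$ for all $i,j$ and $\beta_i \neq \beta_j$ for $i \neq j$, the multiset $\{e^{2\pi i \beta_j/d}\}$ contains at least one element not in $\{e^{2\pi i \alpha_j/d}\}$, so the two monic polynomials are distinct. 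Hence $B - A \neq 0$ and $u - \id$ has rank exactly one, proving that $u$ is a pseudoreflection.

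Next I would compute $\det u = \det(B)/\det(A)$ via the standard determinant of a companion matrix. For $A$ viewed as the companion of $X^n + A_{n-1}X^{n-1} + \cdots + A_0$, one has $\det A = (-1)^n A_0 = (-1)^n \cdot (-1)^n \prod_{j} e^{2\pi i \alpha_j/d} = e^{2\pi i (\sum_j \alpha_j)/d}$, and similarly $\det B = e^{2\pi i (\sum_j \beta_j)/d}$. The sum condition $\sum_i \alpha_i - \sum_i \beta_i = \binom{d}{2}$ from Definition \ref{hypergeo_param_def} then yields
\[
\det u = \frac{\det B}{\det A} = e^{-2\pi i \binom{d}{2}/d} = e^{\pi i(1-d)}.
\]
This equals $1$ when $d$ is odd and $-1$ when $d$ is even, as claimed.

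There is not really a serious obstacle here; the lemma reduces to the structural observation about companion matrices plus a short computation. The only place where one must invoke the hypotheses beyond routine algebra is in ruling out the degenerate case $B = A$ (which would make $u$ the identity rather than a pseudoreflection), and for this the pairwise distinctness of the $\beta_j$ together with $\alpha_i \neq \beta_j$ suffices.
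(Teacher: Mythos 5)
Your proof is correct, and it is more self-contained than the paper's: the paper simply cites \cite[Prop 2.10]{beukers_heckman} for both the rank-one statement and the formula $\det u = e^{2\pi i \gamma/d}$ with $\gamma = \sum_i \beta_i - \sum_i \alpha_i$, then plugs in the normalisation $\sum_i \alpha_i - \sum_i \beta_i = \binom{d}{2}$. You instead unpack the reference directly from the companion-matrix structure: $B - A$ is supported in the last column, so $u - \id = A^{-1}(B-A)$ has rank at most one, and it is nonzero because $e^{2\pi i \beta_1/d}$ is a root of the second monic polynomial but (by $\alpha_i \neq \beta_j$) not of the first; then $\det u = \det B/\det A = e^{2\pi i(\sum\beta_j - \sum\alpha_j)/d} = e^{\pi i(1-d)}$ by the usual companion-matrix determinant. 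This buys you a proof that does not depend on the reader having Beukers--Heckman at hand, at the cost of a couple of lines of elementary linear algebra; mathematically the two are doing the same work, since the cited proposition is itself proved by essentially this calculation.

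One tiny cosmetic point: the paper's displayed identity $\gamma = \sum\beta_i - \sum\alpha_i$ together with ``hence $\gamma = \binom{d}{2}$'' has the sign backwards relative to Definition \ref{hypergeo_param_def}, but this is harmless since $e^{2\pi i \gamma/d}$ only sees $\gamma \bmod d$ and $\pm\binom{d}{2}$ give the same value of $e^{\pi i(d-1)} = e^{\pi i(1-d)}$. Your write-up keeps the sign straight.
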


\begin{proof}
	By \cite[Prop 2.10]{beukers_heckman}, we know that $u - \id$ has rank one and 
	$\det u = e^{2 \pi i \gamma/d}$, where
	\[
		\gamma = \sum_{i = 1}^n (\beta_i - \beta_1) - \sum_{i = 1}^n (\alpha_i - \beta_1) = \sum_{i = 1}^n \beta_i - \sum_{i = 1}^n \alpha_i.
	\]
	Hence, the definition of hypergeometric parameters implies $\gamma = \binom{d}{2}$
	and the claim follows. 
\end{proof}

\begin{lemma} \label{monodromy_group_lemma}
	Let $(\alpha_1, \dots, \alpha_n ; \beta_1, \dots, \beta_n)$ be a hypergeometric parameter of dimension $n$ modulo $d$.
	There exists a basis of $M(\alpha_i;\beta_j)_{B,1/2} \otimes_{\mathcal O} \complex$ in which the action of 
	$\pi_1(\Spec A_d(\complex), 1/2)$ on $M(\alpha_i;\beta_j)_{B,1/2} \otimes_{\mathcal O} \complex$ is given by the unique group homomorphism 
	\[
		\rho_{1/2} : \pi_1(\Spec A_d (\complex), 1/2) \to \GL_n(\complex)
	\] 
	making the following diagram commute.
	\[
		\begin{tikzcd}
			\pi_1(\mathbb{P}^1(\complex) \setminus (\mu_d \cup \{0, \infty\}), 1/2) \arrow{r}{[d]_{*}} \arrow{d}{\eta} & \pi_1(\mathbb{P}^1(\complex) \setminus \{0, 1, \infty\}, 1/2^d) \arrow{d}{HG(\alpha_i; \beta_i)} \\
			\pi_1(\Spec A_d(\complex), 1/2) \arrow{r}{\rho_{1/2}} & \GL_n(\complex)
		\end{tikzcd}
	\]
\end{lemma}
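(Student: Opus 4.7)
My plan is to combine corollary \ref{local_system_cor}, which identifies $M_{B,\cdot} \otimes_{\mathcal{O}_d} \complex$ with the solution local system $\operatorname{Sol}(D)$ of the Picard--Fuchs equation, with the classical Beukers--Heckman description of monodromy of hypergeometric equations \cite{beukers_heckman}. The role of the covering $[d] : t \mapsto t^d$ is to convert the operator $D$ (in $t$) into a standard hypergeometric operator (in $z = t^d$), so that pulling back the Beukers--Heckman monodromy along $[d]$ yields the monodromy of $M_{B, \cdot} \otimes \complex$ restricted to $\mathbb{P}^1(\complex) \setminus (\mu_d \cup \{0, \infty\})$.

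First I would substitute $z = t^d$ into the operator $D$ from lemma \ref{diffeq_lemma}. Using $\theta_t = d \theta_z$, a short calculation gives $D = d^n \tilde{D}$, where $\tilde{D}$ is the standard Beukers--Heckman hypergeometric operator on $\mathbb{P}^1(\complex) \setminus \{0, 1, \infty\}$ with parameters $\alpha_j^{BH} = [\alpha_j - \beta_1]/d$ and $\beta_j^{BH} = [\beta_j - \beta_1]/d$. Consequently, the restriction of $\operatorname{Sol}(D)$ to $U := \mathbb{P}^1(\complex) \setminus (\mu_d \cup \{0, \infty\})$ equals $[d]^* \operatorname{Sol}(\tilde{D})$.

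Second, I would invoke Beukers--Heckman to identify $\operatorname{Sol}(\tilde{D})$. Its monodromy at $\infty$ has eigenvalues $e^{2 \pi i [\alpha_j - \beta_1]/d}$, while the matrix $A$ in the definition of $HG(\alpha_i ; \beta_i)$ has eigenvalues $e^{2 \pi i \alpha_j/d}$; the analogous comparison at $0$ yields an overall scalar discrepancy, so $\operatorname{Sol}(\tilde{D}) \cong HG(\alpha_i ; \beta_i) \otimes \chi^{-1}$ for the rank-one local system $\chi$ on $\mathbb{P}^1(\complex) \setminus \{0, 1, \infty\}$ determined by $\chi_0 = e^{-2 \pi i \beta_1 / d}$, $\chi_1 = 1$, $\chi_\infty = e^{2 \pi i \beta_1 / d}$. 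The key observation is that $[d]^* \chi$ is trivial, since $\chi_0^d = \chi_\infty^d = 1$. Choosing a trivialisation of $[d]^* \chi$ yields a basis of $M_{B, 1/2} \otimes \complex$ in which the monodromy representation of $\pi_1(U, 1/2)$ coincides with $HG(\alpha_i ; \beta_i) \circ [d]_*$.

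Finally, I would verify that this representation descends along the surjection $\eta : \pi_1(U, 1/2) \twoheadrightarrow \pi_1(\complex \setminus \mu_d, 1/2)$. Its kernel is the normal closure of a small loop $\gamma_0^{(t)}$ around $t = 0$, and $[d]_*(\gamma_0^{(t)})$ is conjugate to $\gamma_0^d$ in $\pi_1(\mathbb{P}^1(\complex) \setminus \{0,1,\infty\}, 1/2^d)$. Since $HG(\gamma_0) = B^{-1}$, descent reduces to showing $B^d = \id$; this holds because the defining conditions of a hypergeometric parameter force $\beta_1, \dots, \beta_n$ to be pairwise distinct modulo $d$, so the companion matrix $B$ is diagonalisable with eigenvalues $e^{2 \pi i \beta_j / d}$ whose $d$th powers are all $1$. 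The main obstacle is the second step: correctly extracting the rank-one twist $\chi$ between $\operatorname{Sol}(\tilde{D})$ and $HG(\alpha_i ; \beta_i)$ and verifying that $[d]^*$ kills it --- this is the point where the choice of normalisation for the hypergeometric operator and the specific ordering of eigenvalues must all match up consistently.
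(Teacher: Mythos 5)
Your proposal follows the paper's proof in substance, but fills in two non-trivial details that the paper leaves implicit, and both of your additions are correct and worth making explicit. The paper's proof is a three-line sketch: identify $d^{-n}D$ as the $[d]$-pullback of the Beukers--Heckman operator, then cite Corollary \ref{local_system_cor} and Levelt's theorem. You do the same substitution $z = t^d$ and reduce to Levelt, but you explicitly compute the Beukers--Heckman parameters $\alpha_j^{BH} = [\alpha_j - \beta_1]/d$, $\beta_j^{BH} = [\beta_j - \beta_1]/d$, correctly notice that the resulting Levelt companion matrices have eigenvalues $e^{2\pi i \alpha_j/d} e^{-2\pi i \beta_1/d}$ rather than the $e^{2\pi i \alpha_j/d}$ appearing in Definition \ref{hypergeometric_group_def}, so there is a genuine rank-one twist $\chi$ with $\chi_0 = e^{-2\pi i\beta_1/d}$, and you verify that $[d]^*\chi$ is trivial because $\chi_0^d = \chi_\infty^d = 1$. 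This is exactly the point that makes the paper's terse ``Hence the claim follows'' work, and it is a good idea to spell it out. Your second addition --- checking that $HG(\alpha_i;\beta_i)\circ [d]_*$ kills $\ker\eta$, which reduces to $B^d = \id$ since $[d]_*\gamma_0^{(t)}$ is conjugate to $\gamma_0^d$, and $B$ is a diagonalisable matrix whose eigenvalues are $d$th roots of unity by the distinctness of the $\beta_j$ --- establishes that $\rho_{1/2}$ in the statement actually exists as a homomorphism, which the paper again takes as understood. Both steps are sound and make the argument self-contained.
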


\begin{proof}
	Let $D$ be the differential operator from lemma \ref{diffeq_lemma}. 
	Observe that $d^{-n} D$ is the pullback under $[d]$ of the hypergeometric differential operator studied in \cite{beukers_heckman}. 
	Hence, the claim follows from lemma \ref{local_system_cor} and the Theorem of Levelt \cite[Thm 3.5]{beukers_heckman}.
\end{proof}

\begin{remark} \label{sl_remark}
	The homomorphism $\rho_{1/2}$ actually maps into $\{g \in \GL_n(\mathcal{O}_d) : \det(g)^2 = 1\} \subset \GL_n(\complex)$ since $\det(A^d) = \det(B^d) = 1$ and $\det(A^{-1}B) \in \{\pm 1\}$. If $d$ is odd, then $\rho_{1/2}$ maps into $\SL_n(\mathcal{O}_d)$.
\end{remark}

\begin{lemma} \label{constant_det_lemma}
	If $d$ is even, let $\widetilde{A}_d = A_d[\sqrt{1 - t^d}]$ and if $d$ is odd, let $\widetilde{A}_d = A_d$. 	Let $(\alpha_1, \dots, \alpha_n ; \beta_1, \dots, \beta_n)$ be a hypergeometric parameter of dimension $n$ modulo $d$. Denote by $\mathcal{M}_{\lambda}$ the $\mathcal{O}_{d,\lambda}$-etale local system on $\Spec A_d$ corresponding to $M(\alpha_1, \dots, \alpha_n ; \beta_1, \dots, \beta_n)_\lambda$.
	For any map $\iota : \mathcal{O}_{d} \to \complex$, the local system
	\[
	\det (\mathcal{M}_\lambda) \rvert_{\Spec (\widetilde{A}_d \otimes_{\iota} \complex)}
	\]
	is constant.
\end{lemma}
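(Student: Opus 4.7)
The plan is to compute the determinant character of the Betti monodromy representation $\rho_{1/2}$ (from lemma \ref{monodromy_group_lemma}) on generators of $\pi_1(\Spec A_d(\complex), 1/2)$ and to recognise it as precisely the character cut out by the double cover $\widetilde{A}_d/A_d$. Since the Betti and $\lambda$-adic realisations are compatible under the Artin comparison theorem, constancy of $\det \mathcal{M}_\lambda$ on $\Spec(\widetilde{A}_d \otimes_\iota \complex)$ is equivalent to triviality of the determinant character $\chi := \det \rho_{1/2}$ on the subgroup $\pi_1(\Spec \widetilde{A}_d(\complex))$ of the topological fundamental group.

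First I would note that $\Spec A_d(\complex) = \complex \setminus \mu_d$ is homotopy equivalent to a wedge of $d$ circles, so $\pi_1(\complex \setminus \mu_d, 1/2)$ is freely generated by simple loops $\{\gamma_\zeta\}_{\zeta \in \mu_d}$ around the $d$-th roots of unity. Using the diagram of lemma \ref{monodromy_group_lemma}, I would lift $\gamma_\zeta$ along the surjection $\eta$ to a loop in $\mathbb{P}^1(\complex) \setminus (\mu_d \cup \{0,\infty\})$ and apply $[d]_*$; since $[d]$ is unramified at $\zeta \neq 0, \infty$ and sends $\zeta$ to $1$, the image is a small loop winding once around $1 \in \mathbb{P}^1 \setminus \{0,1,\infty\}$, hence represents a conjugate of the generator $\gamma_1 \in \Gamma_{0,1,\infty}$. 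Consequently, by lemma \ref{transvection_lemma},
\[
\chi(\gamma_\zeta) \;=\; \det HG(\alpha_i;\beta_j)(\gamma_1) \;=\; (-1)^{d-1}.
\]
In the odd case, this shows $\chi$ is trivial and the result follows since $\widetilde{A}_d = A_d$.

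For $d$ even, $\chi$ sends every generator $\gamma_\zeta$ to $-1$. The space $\Spec \widetilde{A}_d(\complex) = \{(t,s) : s^2 = 1-t^d,\ t \notin \mu_d\}$ is a connected étale double cover of $\complex \setminus \mu_d$, and is classified by the monodromy character of the multi-valued function $\sqrt{1-t^d}$. Because $1-t^d$ has a simple zero at each $\zeta \in \mu_d$, traversing $\gamma_\zeta$ multiplies $\sqrt{1-t^d}$ by $-1$, so the subgroup $\pi_1(\Spec \widetilde{A}_d(\complex)) \subset \pi_1(\complex \setminus \mu_d, 1/2)$ coincides with $\ker \chi$; pulling back $\chi$ kills it. The only real bookkeeping is identifying the conjugacy class of $[d]_*(\gamma_\zeta)$, which follows because $[d]$ is a local biholomorphism near each $\zeta \in \mu_d$, together with a quick verification that the double cover is connected; neither presents a serious obstacle.
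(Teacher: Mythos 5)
Your proof is correct and follows essentially the same approach as the paper: compute $\det\rho_{1/2}$ on the simple loops around $\mu_d$ via Lemma~\ref{transvection_lemma}, and in the even case identify $\pi_1(\Spec\widetilde{A}_d(\complex))$ with the kernel of the resulting sign character. One small imprecision: for a general embedding $\iota$ the Betti local system on $\Spec(A_d\otimes_\iota\complex)$ corresponds to the rescaled parameter $(s\alpha_i;s\beta_j)$ rather than $(\alpha_i;\beta_j)$, so (as the paper does) you should first reduce to $\iota$ being the inclusion by replacing the parameter, or observe that the transvection computation is insensitive to this rescaling.
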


\begin{proof}
	After replacing $(\alpha_i ; \beta_j)$ by $(s \alpha_i ; s \beta_j)$, we may assume that $\iota$ is the inclusion. Thus, the geometric monodromy action on $\mathcal{M}_\lambda$ is given by $\rho_{1/2}$.
	When $d$ is odd, the claim follows from the previous remark. So we assume that $d$ is even.
	The image of $\pi_1(\Spec \widetilde{A}_d(\complex))$ in $\pi_1(\Spec A_d(\complex), 1/2)$ is the kernel of the homomorphism $\pi_1(\Spec A_d(\complex), 1/2) \to \{\pm 1\}$ which sends any simple loop around a $d$th root of unity to $-1$. By lemma \ref{transvection_lemma}, this homomorphism is equal to $\det \rho_{1/2}$. Thus, the restriction of $\det(\mathcal{M}_\lambda)$ to $\Spec (\widetilde{A}_d \otimes_{\iota} \complex)$ is constant.
\end{proof}

\begin{prop} \label{unipotent_monodromy_prop}
	Let $(\alpha_1, \dots, \alpha_n ; \beta_1, \dots, \beta_n)$ be a hypergeometric parameter of dimension $n$ modulo $d$ and 
\[
M = M(\alpha_1, \dots, \alpha_n ; \beta_1, \dots, \beta_n).
\] 
For $t \in \complex \setminus \mu_d$ let $\gamma_\infty \in \pi_1(\complex \setminus \mu_d, t)$ be a simple loop based at $t$ going around $\infty$. Then $\gamma_{\infty}$ acts on $M_{B,t}$ as a unipotent operator with Jordan blocks $J_\alpha$ of size $\#\{i : \alpha_i = \alpha\}$ for $\alpha \in \{\alpha_i\}$.
Similarly, let $\gamma_1$ denote a simple loop around $1$. Then $\gamma_{1}$ acts as a unipotent operator on $M_{B,t}$ and
the rank of $\gamma_{1} - \id$ is $1$ if $d$ is odd and $0$ if $d$ is even.
\end{prop}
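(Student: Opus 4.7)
The plan is to use Lemma \ref{monodromy_group_lemma}, which identifies the monodromy action of $\pi_1(\complex\setminus\mu_d,1/2)$ on $M_{B,1/2}\otimes\complex$, in a suitable basis, with the composite $HG(\alpha_i;\beta_j)\circ [d]_*\circ\tilde\eta$, for $\tilde\eta$ any set-theoretic section of the surjection $\eta$. Since $\complex\setminus\mu_d$ is connected and the sheaf $R^{d-2}f_*\underline{\complex}$ is locally constant, the Jordan type of the monodromy is independent of the basepoint, so it suffices to argue at $t=1/2$. The problem reduces to computing $[d]_*(\gamma_\infty)$ and $[d]_*(\gamma_1)$ in $\Gamma_{0,1,\infty}$ and then evaluating $HG$ on them.

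The local behaviour of $[d]:\mathbb{P}^1\to\mathbb{P}^1$ (whose derivative is $dz^{d-1}$) shows that $[d]$ is totally ramified of index $d$ at $\infty$ and is a local biholomorphism at every point of $\complex^*$. Consequently $[d]_*(\gamma_\infty)=\gamma_\infty^d$ while $[d]_*(\gamma_1)=\gamma_1$ in $\Gamma_{0,1,\infty}$. Thus $\gamma_\infty$ acts on $M_{B,t}\otimes\complex$ as $HG(\gamma_\infty)^d=A^d$ and $\gamma_1$ acts as $HG(\gamma_1)=A^{-1}B$.

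To find the Jordan form of $A^d$, I use the standard fact that the characteristic and minimal polynomials of a companion matrix coincide, so the Jordan form of $A$ consists of a single block $J(\lambda)=\lambda I+N$ of size $m_\alpha:=\#\{i:\alpha_i=\alpha\}$ at each distinct eigenvalue $\lambda=e^{2\pi i\alpha/d}$. Since $\lambda^d=1$, the expansion
\[
(\lambda I+N)^d \;=\; I + d\lambda^{-1}N + \tbinom{d}{2}\lambda^{-2}N^2 + \cdots
\]
is unipotent, and its nilpotent part has the same rank and Jordan structure as $N$ because the leading coefficient $d\lambda^{-1}$ is a unit in $\complex$. This yields exactly one unipotent Jordan block of size $m_\alpha$ per distinct value of $\alpha$, settling the claim for $\gamma_\infty$.

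For $\gamma_1$, Lemma \ref{transvection_lemma} says that $A^{-1}B$ is a pseudoreflection with $\det=1$ when $d$ is odd and $\det=-1$ when $d$ is even. In the odd case, writing $A^{-1}B=I+vw^T$, the identity $\det(I+vw^T)=1+w^Tv=1$ forces $w^Tv=0$, whence $(A^{-1}B-I)^2=v(w^Tv)w^T=0$, so $A^{-1}B$ is unipotent with rank-one defect, as required. The main obstacle is the even case: a pseudoreflection of determinant $-1$ is not literally unipotent on the ambient $n$-dimensional space, so the stated conclusion must exploit additional structure of the motive. I expect this to be extracted by passing to the double cover $\Spec\widetilde{A}_d=\Spec A_d[\sqrt{1-t^d}]$ featuring in Lemma \ref{constant_det_lemma}: on this cover $\gamma_1$ lifts to $\gamma_1^2$, and the square of a reflection is the identity, giving the rank-$0$ conclusion naturally. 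I would verify this interpretation first and adjust the proof accordingly before formalising the even case.
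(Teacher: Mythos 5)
Your treatment of $\gamma_\infty$ and of $\gamma_1$ in the $d$ odd case is correct and follows the route the paper intends: Lemma~\ref{monodromy_group_lemma} identifies the monodromy with $\rho_{1/2}$, the covering map $[d]$ is totally ramified of index $d$ at $\infty$ and unramified over $1$, so $\rho_{1/2}(\gamma_\infty)$ is (a conjugate of) $A^d$ and $\rho_{1/2}(\gamma_1)$ is (a conjugate of) $A^{-1}B$. Since $A$ is a non-derogatory companion matrix with eigenvalues $e^{2\pi i\alpha_j/d}$, $A^d$ is unipotent with one Jordan block per distinct $\alpha$, and passing to the $d$-th power of a single Jordan block preserves its size because $z\mapsto z^d$ has non-vanishing derivative at each eigenvalue; for $d$ odd, $\det(A^{-1}B)=1$ together with rank one forces $A^{-1}B$ to be a transvection. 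All of this matches the paper's terse proof (``follows from Lemma~\ref{monodromy_group_lemma}, the definition of $A$, and Lemma~\ref{transvection_lemma}'').

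Your flagged worry about $d$ even is not a defect in your proposal but a genuine problem with the proposition as stated, and you should not expect to be able to ``adjust the proof'' to rescue it. Lemma~\ref{transvection_lemma} says precisely that for $d$ even the matrix $u=A^{-1}B$ is a pseudoreflection with $\det u=-1$; such a $u$ is an involution with eigenvalue $-1$, so it is \emph{not} unipotent, and $u-\id$ has rank $1$, not $0$. This is even acknowledged in the paper itself: the proof of Lemma~\ref{constant_det_lemma} uses exactly that $\det\rho_{1/2}$ sends each simple loop around a $d$-th root of unity to $-1$ when $d$ is even, which is irreconcilable with ``$\gamma_1$ acts as a unipotent operator and $\gamma_1-\id$ has rank $0$.'' Your double-cover observation identifies the correct fix: on $\Spec\widetilde A_d$ the loop around a preimage of $1$ maps to $\gamma_1^2$, and indeed $(A^{-1}B)^2=\id$, so the rank-$0$ statement holds for $\gamma_1^2$ (equivalently for the monodromy of the pulled-back local system), not for $\gamma_1$ itself. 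The proposition should either replace ``unipotent of rank $0$'' by ``a reflection, with $\gamma_1^2=\id$,'' or be stated over $\widetilde A_d$. Fortunately this misstatement is harmless for the rest of the paper: the rank-$1$ nilpotent construction in Theorem~\ref{potential_automorphy_thm} is invoked only for $d$ odd, where your argument is complete.
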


\begin{proof}
	The first part follows directly from lemma \ref{monodromy_group_lemma} and the definition of the matrix $A$. The second part is implied by \ref{transvection_lemma}.
\end{proof}

\begin{prop} \label{big_monodromy_prop}
	Let $(\alpha_1, \dots, \alpha_n ; \beta_1, \dots, \beta_n)$ be a hypergeometric parameter of dimension $n$ modulo $d$ and $t \in \complex \setminus \mu_d$. If the following hypotheses are satisfied 
	\begin{itemize}
		\item $|\{\alpha_1, \dots, \alpha_n\}| < n$,
		\item $\{\beta_1, \dots, \beta_n\}$ does not form an arithmetic progression,
		\item there is no non-zero $s \in \ints/d\ints$ such that
		\[
		\{\alpha_i + s\} = \{\alpha_i\} \quad \wedge \quad \{\beta_i + s\} = \{\beta_i\},
		\]
		\item there is no $s \in \ints/d\ints$ such that
		\[
		\{-\alpha_i - s\} = \{\alpha_i + s\} \quad \wedge \quad \{-\beta_i -s\} = \{\beta_i + s\},
		\]
	\end{itemize}
	then the identity component of the Zariski closure of the image of the geometric monodromy representation
	\[
	\pi_1(\complex \setminus \mu_d, t) \to \GL(M(\alpha_1, \dots, \alpha_n ; \beta_1, \dots, \beta_n)_{B,t} \otimes_{\mathcal{O}_d} \complex)
	\]
	contains $\SL_n(\complex)$.
\end{prop}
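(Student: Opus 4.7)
My plan is to reduce the statement to the Beukers--Heckman classification of hypergeometric monodromy groups and check each hypothesis of the proposition against one of the undesired cases in that classification.

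\textbf{Step 1: reduction to the full hypergeometric group.} By Lemma \ref{monodromy_group_lemma}, the geometric monodromy representation on $M(\alpha_i;\beta_j)_{B,1/2}\otimes\complex$ coincides with the composition $HG(\alpha_i;\beta_j)\circ[d]_*$. Since $[d]_*$ identifies $\pi_1(\mathbb{P}^1(\complex)\setminus(\mu_d\cup\{0,\infty\}),1/2)$ with a finite-index subgroup of $\pi_1(\mathbb{P}^1(\complex)\setminus\{0,1,\infty\},1/2^d)$, and finite-index subgroups share the same identity component of their Zariski closure, it suffices to prove that the Zariski closure $\bar{H}$ of $H:=\operatorname{im}(HG(\alpha_i;\beta_j))$ in $\GL_n(\complex)$ satisfies $\bar{H}^\circ\supseteq\SL_n(\complex)$.

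\textbf{Step 2: invoke Beukers--Heckman.} Because the sets $\{e^{2\pi i\alpha_j/d}\}$ and $\{e^{2\pi i\beta_j/d}\}$ are disjoint (second bullet of Def.\ \ref{hypergeo_param_def}) and $\{\beta_j\}$ has no repetitions (third bullet), the hypergeometric group $H$ acts irreducibly on $\complex^n$ (Beukers--Heckman, Prop.\ 3.3). Their main classification theorem (Thm.\ 4.8 together with \S5--\S6 of \emph{loc.\ cit.}) then states that, for an irreducible hypergeometric group, either $H$ is finite, imprimitive, self-dual (i.e.\ $\bar{H}^\circ$ lies in a form of $\Sp_n$ or $\SO_n$, or in a sporadic subgroup such as $G_2\subset\GL_7$), or else $\bar{H}^\circ\supseteq\SL_n(\complex)$.

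\textbf{Step 3: excluding the unwanted cases.} I will match each hypothesis to one obstruction.
\begin{itemize}
\item \emph{Non-finiteness.} By Prop.\ \ref{unipotent_monodromy_prop}, the monodromy at infinity has Jordan block sizes $\#\{i:\alpha_i=\alpha\}$. The first hypothesis $|\{\alpha_j\}|<n$ forces at least one Jordan block of size $\geq 2$, so $\gamma_\infty$ acts non-semisimply, hence $H$ is infinite.
\item \emph{Primitivity.} Beukers--Heckman's criterion (Thm.\ 5.8 in their paper) says that an irreducible hypergeometric group is imprimitive exactly when either there is a nonzero $s\in\ints/d\ints$ with $\{\alpha_i+s\}=\{\alpha_i\}$ and $\{\beta_j+s\}=\{\beta_j\}$ (Kummer induction from a proper subgroup of $\ints/d\ints$), or the parameters $\{\beta_j\}$ form an arithmetic progression (induction from a cyclic group). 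The second and third bullets of the hypothesis rule out exactly these two cases.
\item \emph{Not self-dual.} The dual hypergeometric parameter to $(\alpha_i;\beta_j)$ is (up to a global shift) $(-\alpha_i;-\beta_j)$, so $\bar H^\circ\subseteq\Sp_n$ or $\SO_n$ forces the existence of $s$ with $\{-\alpha_i-s\}=\{\alpha_i+s\}$ and $\{-\beta_j-s\}=\{\beta_j+s\}$. The fourth hypothesis excludes this.
\end{itemize}
Finally, tensor-indecomposability and the absence of sporadic exceptional embeddings like $G_2\hookrightarrow\SL_7$ are automatic once we know $\bar H^\circ$ is not contained in a classical proper reductive subgroup (one may invoke the list in \S6 of Beukers--Heckman). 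Combining these three exclusions with Step 2 yields $\bar H^\circ\supseteq\SL_n(\complex)$.

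\textbf{Main obstacle.} The nontrivial bookkeeping is Step 3, particularly making the dictionary between the additive parameters $\alpha_i,\beta_j\in\ints/d\ints$ used here and the multiplicative parameters on the unit circle used by Beukers--Heckman, and then matching their imprimitivity/self-duality criteria literally with the four combinatorial conditions stated. Once this translation is made explicit, each exclusion is a direct quotation of their results, so no genuinely new analytic input is required.
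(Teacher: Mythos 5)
Your proposal follows essentially the same route as the paper's proof: reduce via Lemma \ref{monodromy_group_lemma} to the abstract hypergeometric group $HG(\alpha_i;\beta_j)$, note that $[d]_*$ has finite index so the identity component of the Zariski closure is unchanged, and then appeal to the Beukers--Heckman classification of hypergeometric monodromy groups while verifying that the four combinatorial hypotheses rule out the finite, imprimitive, and (scalar-shift of) self-conjugate cases. The paper cites \cite[Thm 6.5]{beukers_heckman} together with \cite[Thm 5.3, Thm 5.8]{beukers_heckman} and the infinite-order observation stemming from $|\{\alpha_i\}|<n$, exactly as you do; your attribution of the arithmetic-progression hypothesis and the invariant-shift hypothesis both to imprimitivity, and the final hypothesis to self-conjugacy, is a finer-grained restatement of the same checks.
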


\begin{proof}
	It follows from \cite[Theorem 6.5]{beukers_heckman} that the Zariski closure of the image of $HG(\alpha_i, \beta_j)$ contains $\SL_n(\complex)$. Namely, the conditions are satisfied by \cite[Theorem 5.3 and Theorem 5.8]{beukers_heckman} and the fact that any scalar shift of the hypergeometric group with parameters $(e^{2 \pi i \alpha_i/d} ; e^{2 \pi i \beta_j/d})$ contains an element of infinite order since $\{\alpha_i\}$ contains a repeated element.
	
	Using lemma \ref{monodromy_group_lemma} we see that the identity component of the Zariski closure of the image of 
	\[
	\pi_1(\complex \setminus \mu_d, t) \to \GL(M(\alpha_i ; \beta_i)_{B,t} \otimes_{\mathcal{O}_d} \complex)
	\]
	is a Zariski-closed finite index subgroup of $\SL_n(\complex)$. But $\SL_n(\complex)$ is connected, hence any such group is equal to $\SL_n(\complex)$ itself.
\end{proof}

\begin{prop} \label{big_monodromy_finite_prop}
	Let $(\alpha_1, \dots, \alpha_n ; \beta_1, \dots, \beta_n)$ be a hypergeometric parameter of dimension $n$ modulo $d$ and $t \in \complex \setminus \mu_d$ satisfying the hypotheses of the previous proposition. Moreover, let $U,V < (\ints/ d\ints)^\times$ be subgroups such that $\{1\} = U \cap V$, $UV = (\ints/d\ints)^\times$ and for $s \in (\ints/d\ints)^\times$ the equalities
	\begin{align*}
		\{ s\alpha_i - s\alpha_j : 1 \leq i,j \leq n \} &= \{\alpha_i - \alpha_j : 1 \leq i,j \leq n\} \\ 
		\{ s\beta_i - s\beta_j : 1 \leq i,j \leq n \} &= \{\beta_i - \beta_j : 1 \leq i,j \leq n\}
	\end{align*}
	imply that $s \in U$. Let $\lambda < \mathcal{O}_d$ be a sufficiently large prime of residue characteristic $\ell$ such that $\ell \in V$. Then the image of
	\[
\pi_1(\Spec A_d(\complex), t) \to \GL(M(\alpha_i ; \beta_j)_{B,t} \otimes_{\mathcal{O}_d} \mathcal{O}_d/\lambda) 
\]
	contains $\SL_n(\mathcal{O}_d/\lambda)$.
\end{prop}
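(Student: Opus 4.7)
The plan is to combine the characteristic-zero Zariski density from Proposition \ref{big_monodromy_prop} with classification-style constraints on finite subgroups of $\GL_n(\overline{\mathbb{F}}_\ell)$, and then to rule out proper-subfield reductions by a Galois-theoretic argument that uses the hypothesis on $U$ and $V$. Write $\Gamma \subset \GL_n(\mathcal{O}_d)$ for the image of $\rho_{1/2}$ from Lemma \ref{monodromy_group_lemma} and $\overline{\Gamma} \subset \GL_n(\mathcal{O}_d/\lambda)$ for its reduction. By the previous proposition the identity component of the Zariski closure of $\Gamma$ is $\SL_{n,\complex}$, and by Lemma \ref{transvection_lemma} the group $\Gamma$ contains the pseudoreflection $u = \rho_{1/2}([d]_* \gamma_1)$, whose reduction $\overline{u}$ in $\overline{\Gamma}$ remains a non-trivial pseudoreflection once $\lambda$ avoids a finite exceptional set.

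Next I would show that for $\ell$ large enough (depending on $d$ and the parameters) there is a divisor $m \mid f := [\mathcal{O}_d/\lambda : \mathbb{F}_\ell]$ such that, after $\GL_n(\overline{\mathbb{F}}_\ell)$-conjugation, $\overline{\Gamma}$ contains $\SL_n(\mathbb{F}_{\ell^m})$. The cleanest route is probably Nori's theorem applied to the $\overline{\Gamma}$-normal closure of the transvection provided by $\overline{u}$ (for $d$ odd; use $\overline{u}^2$ when $d$ is even): Zariski density of $\Gamma$ in characteristic zero spreads out to imply that the Nori envelope of $\overline{\Gamma}$ is all of $\SL_{n,\overline{\mathbb{F}}_\ell}$, so the $\overline{\Gamma}$-conjugates of $\overline{u}$ span $\mathfrak{sl}_n$ and generate a conjugate of $\SL_n$ over some subfield. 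Alternatively, the classification of irreducible subgroups of $\GL_n(\overline{\mathbb{F}}_\ell)$ generated by pseudoreflections (Zalesski--Serezhkin--Wagner), combined with the fact that the Zariski closure is $\SL_n$ (ruling out the symplectic, orthogonal, unitary and sporadic cases for large $\ell$), yields the same conclusion.

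The remaining task is to show $m = f$. If $m < f$, then $\overline{\Gamma}$ is preserved up to $\GL_n(\overline{\mathbb{F}}_\ell)$-conjugation by the entrywise Frobenius $\Frob^m \in \Gal(\mathcal{O}_d/\lambda \,/\, \mathbb{F}_{\ell^m})$. Under the identification $\Gal(\mathcal{O}_d/\lambda \,/\, \mathbb{F}_\ell) \cong \langle \ell \rangle \subset (\ints/d\ints)^\times$ this Frobenius corresponds to $s = \ell^m$, and the Galois action of $\sigma_s$ on $\mathcal{O}_d$ sends $\rho_{1/2}$ to the hypergeometric representation with parameters $(s\alpha_i; s\beta_j)$. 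Comparing eigenvalue multisets of the semisimple parts of the monodromies at $\infty$ and $0$ (i.e.\ of the matrices $A$ and $B$ in Definition \ref{hypergeometric_group_def}) in the two representations, which must be conjugate as projective representations, gives
\[
\{s\alpha_i - s\alpha_j\}_{i,j} = \{\alpha_i - \alpha_j\}_{i,j}, \qquad \{s\beta_i - s\beta_j\}_{i,j} = \{\beta_i - \beta_j\}_{i,j}.
\]
By hypothesis this forces $s \in U$; but $s = \ell^m \in V$ since $\ell \in V$, so $s \in U \cap V = \{1\}$, contradicting $m < f$. Hence $\overline{\Gamma} \supseteq \SL_n(\mathbb{F}_{\ell^f}) = \SL_n(\mathcal{O}_d/\lambda)$.

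The main obstacle I expect is the middle step: making the passage from ``Zariski dense in $\SL_n$ in characteristic zero'' to ``contains a conjugate of $\SL_n$ over some subfield mod $\lambda$'' precise and sufficiently effective in $\lambda$, especially in controlling the sporadic exceptions from the pseudoreflection classification and the anomalies at small $\ell$. The Galois-equivariance argument in the final paragraph, by contrast, should be essentially formal once the middle step is in place.
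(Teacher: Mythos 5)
Your proposal takes a genuinely different route from the paper. The paper works with the Weil restriction $G = \Res_{F/\rationals}\SL_n$ over $\mathcal{O}_K$ (where $F$ and $K$ are the fixed fields of $U$ and $V$ respectively), forms the $\mathcal{O}_K$-submodule $W \subset \mathfrak{g}$ generated by logarithms of the unipotents in the image of $\Phi \circ [d]_*$, shows $W \otimes \complex = \mathfrak{g} \otimes \complex = \prod_{v \in V}\mathfrak{sl}_n$ by observing that the factors are irreducible (Zariski density) and pairwise non-isomorphic (here the $U$-hypothesis enters, via multiplicity-freeness of the $V$-indexed adjoints), and then invokes Katz's theorem 12.4.1 once to get $G(\mathcal{O}_K/\lambda') = \SL_n(\mathcal{O}_d/\lambda'\mathcal{O}_d)$ in the image. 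The residue degree $f$ falls out automatically, with no need for a separate descent. By contrast, you first aim for a conjugate of $\SL_n(\mathbb{F}_{\ell^m})$ for some $m \mid f$ and then use a Galois-descent step to force $m = f$.

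Two concrete issues. First, the $d$ even case: the pseudoreflection $u$ has $\det u = -1$, hence is a genuine order-two reflection ($u = 1 + vw^T$ with $w^Tv = -2$ gives $u^2 = 1$), so $\overline{u}^2$ is the identity and provides no transvection. The unipotents you need come from $\gamma_\infty^d$ (i.e.\ from $A^d$, whose non-triviality uses $|\{\alpha_i\}|<n$), not from $u$. This is a genuine error in the proposal, not just imprecision. Second, your acknowledged ``main obstacle'' is indeed where all the work lies: passing from characteristic-zero density to the statement that the Nori envelope of $\overline{\Gamma}$ is $\SL_n$ requires showing the logarithms of unipotents span $\mathfrak{sl}_n$ integrally after inverting finitely many primes --- exactly the $W[1/C] = \mathfrak{g}[1/C]$ step in the paper. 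Once you concede that step, the paper's formulation over $\Res_{F/\rationals}\SL_n$ delivers the whole answer and renders your subsequent Frobenius/Galois argument unnecessary. Your descent step is plausible but also has unexamined points: to conclude that $\overline{\Gamma}$ lies (up to conjugation) in $\GL_n(\mathbb{F}_{\ell^m}) \cdot Z$ you must identify the normalizer of $\SL_n(\mathbb{F}_{\ell^m})$ in $\GL_n(\overline{\mathbb{F}}_\ell)$ and use that the unipotent-generated subgroup is normal in $\overline{\Gamma}$; and the passage from projective conjugacy of the mod-$\lambda$ reductions to equality of the difference multisets $\{\alpha_i - \alpha_j\}$, $\{\beta_i - \beta_j\}$ over $\ints/d\ints$ needs the observation that $\mu_d$ injects into $\overline{\mathbb{F}}_\ell^\times$. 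These can be repaired, but the paper's restriction-of-scalars formulation is cleaner and avoids them.
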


\begin{proof}
	Let $F \subset \rationals(\mu_d)$ be the fixed field of $U < (\ints/d\ints)^\times = \Gal(\rationals(\mu_d)/\rationals)$ and $K \subset \rationals(\mu_d)$ the fixed field of $V$.
	Let $X = \Spec \mathcal{O}_K[1/(n |V| - 1)!]$ and let $G$ be the group scheme $(\Res_{ F / \rationals } \SL_n)_{X}$ and consider
	the homomorphism 
	\begin{align*}
		\Gamma' &\xrightarrow{\Phi} G(\mathcal{O}_K[1/(n|V| - 1)!]) = \SL_n(\mathcal{O}_d[1/(n|V| - 1)!]) \\
		\gamma &\mapsto HG(\alpha_i; \beta_j)(\gamma),
	\end{align*}
	where $\Gamma' < \Gamma_{0,1, \infty}$ is the kernel of $\det HG(\alpha_i; \beta_j)$.
	Choose an embdedding
	$G \hookrightarrow \GL_{n |V|,X}$. 
	
	Let $W \subset \mathfrak{g} := \operatorname{Lie}(G)$ be the $\mathcal{O}_K[1/(n|V| - 1)!]$-module generated by elements $\log \gamma$, where $\gamma$ is a unipotent element in the image of $\Phi \circ [d]_*$. Equip $\mathfrak{g}$ with the conjugation action of $\Gamma_{0,1, \infty}$ via $\Phi$. Then $W$ is a $\Gamma_{0,1,\infty}$-equivariant subspace.
	
	We have $\mathfrak{g} \otimes_{\mathcal O_K} \complex = \prod_{v \in V} \mathfrak{sl}_{n}$. Since the image of $HG(\alpha_i ; \beta_j)$ is Zariski-dense and $\mathfrak{sl}_n$ is an irreducible $\SL_n(\complex)$-representation, we see that for each $v \in V$, the $\mathfrak{sl}_n$-summand of $\mathfrak{g}$ is an irreducible $\Gamma_{0,1,\infty}$-representation.
	Now the assumption on the sets $\{\alpha_i - \alpha_j\}$ and $\{\beta_i - \beta_j\}$ imply that $\mathfrak{g} \otimes \complex$ is a multiplicity-free semisimple $\Gamma_{0,1,\infty}$-module.
	
	The element $\log \gamma_\infty^d \in W$ has a non-zero projection in each irreducible summand of $\mathfrak{g} \otimes \complex$. Thus, $W \otimes \complex = \mathfrak{g} \otimes \complex$ and there exists a constant $C$ such that $W[1/C] = \mathfrak{g}[1/C]$. Let $\lambda' < \mathcal{O}_K$ be the prime below $\lambda$. Since $\ell \in V$, we have $\mathcal{O}_K/\lambda' \cong \mathbb{F}_\ell$. We apply \cite[Theorem 12.4.1]{katz_gkm} with $R = \mathcal{O}_K/\lambda'$ to see that $G(R) = \SL_n(\mathcal{O}_d/\lambda' \mathcal{O}_d)$ is generated by the unipotent elements in the image of $\pi_1(\complex \setminus \mu_d, t) \to \GL_n(\mathcal{O}_d/\lambda'\mathcal{O}_d)$. In particular, the image of $\pi_1(\complex \setminus \mu_d, t) \to \GL_n(\mathcal{O}_d/\lambda)$ contains $\SL_n(\mathcal{O}_d/\lambda)$.
\end{proof}

\begin{remark}
	If we take $V = \{1\}$, $U = (\ints/d\ints)^\times$ and $\ell \equiv 1 \pmod{d}$, then the conditions in the proposition are satisfied and $\mathcal{O}/\lambda \cong \mathbb{F}_\ell$.
\end{remark}

\begin{corollary}
	If $\lambda_1, \lambda_2 < \mathcal{O}_d$ are sufficiently large primes above $\ell_1 \neq \ell_2$ such that $\ell_1, \ell_2 \in V$, then the image of 
	\[
		\pi_1(\Spec A_d(\complex), t) \to \GL(M(\alpha_i ; \beta_j)_{B,t} \otimes_{\mathcal{O}_d} \mathcal{O}_d/\lambda_1\lambda_2) 
	\]
	contains $\SL(M(\alpha_i ; \beta_j)_{B,t} \otimes_{\mathcal{O}_d} \mathcal{O}_d/\lambda_1\lambda_2)$.
\end{corollary}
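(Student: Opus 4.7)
The plan is to combine the mod-$\lambda$ result of the previous proposition with Goursat's lemma and the simplicity of projective special linear groups.

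First I would use the Chinese Remainder Theorem to identify
\[
\mathcal{O}_d/\lambda_1\lambda_2 \cong \mathcal{O}_d/\lambda_1 \times \mathcal{O}_d/\lambda_2,
\]
so that writing $G_i = \SL(M(\alpha_i;\beta_j)_{B,t} \otimes_{\mathcal{O}_d} \mathcal{O}_d/\lambda_i)$, the target $\SL$-group becomes $G_1 \times G_2$. Let $H \subset G_1 \times G_2$ denote the image of the geometric monodromy representation. The previous proposition, applied to $\lambda_1$ and to $\lambda_2$ separately, ensures that the two projections $\pi_i : H \to G_i$ are surjective for $\lambda_i$ sufficiently large.

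Next I would invoke Goursat's lemma: the subgroup $H$ is the fibre product of $G_1$ and $G_2$ over a common quotient, determined by normal subgroups $H_i \trianglelefteq G_i$ and an isomorphism $G_1/H_1 \cong G_2/H_2$. To conclude $H = G_1 \times G_2$ it suffices to show $H_i = G_i$ for $i = 1, 2$. The classification of normal subgroups of $\SL_n(\mathbb{F}_q)$ (for $q > 3$, $n \geq 2$) says that any normal subgroup either equals the whole group or is contained in the centre $Z(G_i) \cong \mu_n(\mathcal{O}_d/\lambda_i)$. So if some $H_i$ is proper, then $G_i/H_i$ has $\PSL_n(\mathcal{O}_d/\lambda_i)$ as a simple quotient, and the Goursat isomorphism induces an isomorphism
\[
\PSL_n(\mathcal{O}_d/\lambda_1) \cong \PSL_n(\mathcal{O}_d/\lambda_2).
\]

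The hard part, though it is really only a classical fact, is ruling out this isomorphism. For $n \geq 2$, the $\ell_i$-part of $|\PSL_n(\mathbb{F}_{\ell_i^{k_i}})|$ is precisely $\ell_i^{k_i n(n-1)/2}$, whereas the $\ell_1$-part of $|\PSL_n(\mathbb{F}_{\ell_2^{k_2}})|$ divides $\prod_{j=2}^n (\ell_2^{jk_2} - 1)$ and hence is bounded in terms of $n$ and the multiplicative order of $\ell_2$ modulo $\ell_1$. Enlarging the residue field (equivalently, taking $\lambda_1, \lambda_2$ sufficiently large) forces $\ell_1^{k_1 n(n-1)/2}$ to exceed this bound, contradicting the isomorphism. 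The few small exceptional coincidences between $\PSL_n$ of different finite fields are excluded by the sufficiently-large hypothesis, so both $H_i$ must equal $G_i$ and $H = G_1 \times G_2$, which proves the claim.
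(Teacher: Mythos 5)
Your proof takes essentially the same route as the paper: reduce via the Chinese Remainder Theorem, apply the previous proposition to each factor, and combine with Goursat's lemma and the fact that $\PSL_n$ over residue fields of distinct characteristics $\ell_1 \neq \ell_2$ are non-isomorphic simple groups. This is precisely the paper's argument, which the paper states in two sentences without the order count.

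One small imprecision worth flagging: when $d$ is even, the monodromy representation does not land in $\SL_n$ --- by Remark \ref{sl_remark} and Lemma \ref{transvection_lemma}, loops around the $d$-th roots of unity have determinant $-1$. So ``the image of the geometric monodromy representation'' is a subgroup of $\GL_n(\mathcal{O}_d/\lambda_1) \times \GL_n(\mathcal{O}_d/\lambda_2)$, not of $G_1 \times G_2 = \SL_n \times \SL_n$ as you wrote. To apply Goursat cleanly one should instead work with the image of $\ker(\det\rho_{1/2})$, the index-$\leq 2$ normal subgroup of $\pi_1$ which the proof of Proposition \ref{big_monodromy_finite_prop} already shows maps onto $\SL_n(\mathcal{O}_d/\lambda_i)$ (it is the subgroup $\Gamma'$, generated by the unipotent classes, used there); this is what the paper does. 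Alternatively one can run Goursat on the full image inside $\GL \times \GL$ and compare composition factors rather than direct factors, which also works. On the ``distinct simple groups'' step: your order-comparison sketch captures the idea, but as stated the bound you invoke does not obviously hold uniformly when $\ell_1$ and $\ell_2$ both grow; the cleanest justification is the classical theorem of Artin on coincidences among orders of finite simple groups, which is what the paper silently invokes via the phrase ``distinct simple groups.''
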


\begin{proof}
	By the proposition we already know that each of the projections
	$\ker \rho_{1/2} \to \SL_n(\mathcal{O}_d/\lambda_i)$ is surjective. But for different primes $\ell_1, \ell_2 > 3$, there are no proper normal subgroups $N_i < \SL_n(\mathcal{O}_d/\lambda_i)$ such that there exists an isomorphism $\SL_n(\mathcal{O}_d/\lambda_1)/N_1 \cong \SL_n(\mathcal{O}_d/\lambda_2)/N_2$ because
	$\PSL_n(\mathcal{O}_d/\lambda_1)$ and $\PSL_n(\mathcal{O}_d/\lambda_2)$ are distinct simple groups. Thus, the claim follows from Goursat's lemma.
\end{proof}

\subsection{The Arithmetic Monodromy} Now that we have studied the action of the fundamental group on $M(\alpha_i, \beta_j)_{B, t}$ we turn to the Galois action on $M(\alpha_i, \beta_j)_{\lambda, t}$. In particular, we will describe its determinant.

\begin{num} \label{jacobi_character_par}
	Let $d$ be a positive integer and $k$ a finite field of cardinality $q$ coprime to $d$ and let $h : \mathcal{O}_d \to k$ be a ring homomorphism.
	Let 
	\[
	t : \{ y \in k : y^d = 1 \} \to \mathcal{O}_d^\times
	\]
	be the unique map such that $h(t(y)) = y$. We define $\tau(y) := t(y^{(q-1)/d})$. Choose a non-trivial additive character $e : k \to \complex^\times$ and define the Gauss sums
	\[
	g(k, a) = -\sum_{x \in k^\times} \tau(x)^{-a} e(x) \in \complex
	\]
	for $a \in \ints/d\ints$. For $\underline{a} \in (\ints/d\ints)^{m + 1}$ we also define the Jacobi sum
	\[
		J_{\underline{a}}(k) = (-1)^m \sum_{\substack{x_1, \dots, x_m \in k \\ x_1 + \dots + x_m = -1}} \prod_{i = 1}^m \tau(x_i)^{-a_i} \in \mathcal{O}_d.
	\]
	When $\underline{a} \neq (0, \dots, 0)$, the relation
	\[
		J_{\underline{a}}(k) = q^{-1} g(k, a_0) \cdots g(k, a_m)
			\]
	holds \cite[I, Lemma 7.9]{deligne_milne_ogus_shih}. It is proved in \cite{weil_jacobi_sum}, that 
	\[
		\psi_{\underline{a}} : \mathfrak{p} \mapsto J_{\underline{a}}(k(\mathfrak{p}))
	\] 
	defines an algebraic Hecke character of $\rationals(\mu_d)$, unramified outside $d$. By abuse of notation we will also denote the corresponding character $G_{\rationals(\mu_d)} \to \mathcal{O}_{d,\lambda}^\times$ by $\psi_{\underline{a}}$. 
\end{num}

\begin{lemma} \label{determinant_jacobi_sum_lemma}
	Let $(\alpha_1, \dots, \alpha_n ; \beta_1, \dots, \beta_n)$ be a hypergeometric parameter of dimension $n$ modulo $d$. Then for all primes $\lambda < \mathcal{O}_d$ with residue characteristic different from the characteristic of $k(\lambda)$, the geometric Frobenius $F_\lambda$ acts on
	\[
		\bigwedge^n M(\alpha_1, \dots, \alpha_n ; \beta_1, \dots, \beta_n)_{\lambda, 0}
	\]
	as multiplication by
	\[
		\prod_{i = 1}^n J_{\underline{a} + \beta_i}(k(\lambda)),
	\]
	where $\underline{a}$ is the vector defined in \ref{motive_def}.
\end{lemma}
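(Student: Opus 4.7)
The plan is to decompose $M_{\lambda,0}$ into one-dimensional $H$-isotypic components and invoke the classical Weil computation of Frobenius on the cohomology of the Fermat hypersurface.

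First I would refine the isotypic decomposition. Since $\chi_{\underline{b}}\rvert_{H_0} = \chi_{\underline{a}}\rvert_{H_0}$ iff $\underline{b} = \underline{a} + c$ for some $c \in \ints/d\ints$, breaking the $H_0$-isotypic subspace into $H$-isotypic pieces gives
\[
M(\alpha_i;\beta_j)_{\lambda,0} = \bigoplus_{c \in \ints/d\ints} H^{d-2}_{et}(\mathcal{Y}_{0,\overline{K}}, \mathcal{O}_{d,\lambda})_{\underline{a}+c}^{(prim)}.
\]
The etale analogue of the monomial computation in paragraph \ref{fermat_para} shows that the $\underline{a}+c$ summand is one-dimensional when every entry of $\underline{a}+c$ is nonzero in $\ints/d\ints$, and vanishes otherwise. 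Unpacking the definition of $\underline{a}$ and using the condition $\alpha_i \neq \beta_j$ from Definition \ref{hypergeo_param_def}, one sees that the non-vanishing components are exactly those with $c = \beta_j$ for $j = 1, \dots, n$. This reproduces the rank $n$ already established in Proposition \ref{hodge_numbers_prop}.

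Second, because $\mathcal{Y}_0$ is defined over $\mathcal{O}_d$ and the $H$-action is defined over $\mathcal{O}_d$ as well, the $H$-action commutes with the Galois action on $H^{d-2}_{et}(\mathcal{Y}_{0,\overline{K}}, \mathcal{O}_{d,\lambda})$. Consequently, each one-dimensional summand above is stable under $F_\lambda$. The classical Weil/Deligne computation of the zeta function of a Fermat hypersurface (see \cite[I, §7]{deligne_milne_ogus_shih}) identifies the action of geometric Frobenius on the character component $H^{d-2}_{et}(\mathcal{Y}_{0,\overline{K}}, \mathcal{O}_{d,\lambda})_{\underline{c}}^{(prim)}$ as multiplication by the Jacobi sum $J_{\underline{c}}(k(\lambda))$, with the same normalisation as in paragraph \ref{jacobi_character_par}.

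Putting these two steps together, $F_\lambda$ acts on $M(\alpha_i;\beta_j)_{\lambda,0}$ as a direct sum of scalars $J_{\underline{a}+\beta_j}(k(\lambda))$ for $j = 1, \dots, n$, and taking the top exterior power yields
\[
F_\lambda \bigm|_{\bigwedge^n M(\alpha_i;\beta_j)_{\lambda,0}} = \prod_{i = 1}^{n} J_{\underline{a}+\beta_i}(k(\lambda)),
\]
as required. The only real content is bookkeeping: matching the indices $c$ with $\{\beta_j\}$ and checking that the etale-cohomological character decomposition of the Fermat hypersurface is compatible with Deligne--Milne--Ogus--Shih's normalisation of Jacobi sums. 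The main potential pitfall is keeping track of the ``geometric'' versus ``arithmetic'' Frobenius convention, so that the scalar comes out to be $J_{\underline{a}+\beta_i}$ rather than its complex conjugate; this is settled by comparing the two $\tau$-normalisations, but no new input is needed.
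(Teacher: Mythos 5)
Your proposal is correct and follows essentially the same route as the paper, whose entire proof is the one-line citation of \cite[I, Proposition 7.10]{deligne_milne_ogus_shih}; you have simply unpacked that reference, explaining the refinement of the $H_0$-isotypic space into one-dimensional $H$-isotypic pieces indexed by $c=\beta_j$, the identification of Frobenius on each piece with the Jacobi sum $J_{\underline{a}+\beta_j}$, and the passage to the top exterior power.
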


\begin{proof}
	This follows from \cite[I, Proposition 7.10]{deligne_milne_ogus_shih}.
\end{proof}

\begin{num}
	For $a \in (\ints/d\ints) \setminus \{0\}$ and a divisor $m \mid d$ such that $m a \neq 0$, we define the functions $\delta_a, \epsilon_{m, a} : (\ints/d\ints) \setminus \{0\} \to \ints$ as follows
	\begin{align*}
		\delta_a(x) = \begin{cases}
			1 & x = a \\
			0 & \text{ otherwise }
		\end{cases}
	\end{align*} 
	and
	\[
	\epsilon_{m,a}(x) = \delta_{- m a} + \sum_{0 \leq j \leq m - 1} \delta_{a + j d/m}.
	\]
	We let $E(d)$ denote the set of functions consisting of $\epsilon_{1,a}$ and $\epsilon_{p,a}$, where $p$ is a prime divisor of $d$. It is proved in \cite[Appendix]{deligne_valeurs} that any $f : (\ints/d\ints) \setminus \{0\} \to \rationals$, such that 
	\[
	\langle f \rangle (s) := \frac{1}{d}\sum_{a = 1}^{d-1} f(a) [sa]
	\]
	is independent of $s \in (\ints/d\ints)^\times$, is a linear combination of elements of $E(d)$. For such a function $f$, we define the complex number
	\[
		\Gamma(f) := \frac{1}{(2 \pi i)^{\langle f \rangle(1)}} \prod_{a = 1}^{d-1} \Gamma(a/d)^{f(a)}.
	\]
\end{num}

\begin{lemma} \label{gamma_algebraic_lemma}
	Let $f : (\ints/ d \ints) \setminus \{0\} \to \ints$ be a function such that $w = \langle f \rangle(s)$ is an integer independent of $s \in (\ints/d\ints)^\times$. Then 
	$\Gamma(f)$
	is an algebraic number and generates an abelian extension of $\rationals(\mu_d)$ unnramified outside $2d$. For $\mathfrak{p} < \mathcal{O}_d$ a prime not dividing $2d$ we have
	\[
		F_{\mathfrak{p}} (\Gamma(f)) = \frac{1}{(N\mathfrak{p})^w} \prod_{a = 1}^{d-1} g(k(\mathfrak{p}), a)^{f(a)} \Gamma(f),
	\]
	where $F_{\mathfrak{p}} \in \Gal(\overline{\rationals}/\rationals(\mu_d))^{ab}$ is the geometric Frobenius element attached to $\mathfrak{p}$.
\end{lemma}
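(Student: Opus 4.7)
The plan is to exploit multiplicativity of $\Gamma(\cdot)$ in the argument $f$ --- which is immediate from the definition, since $\langle f_1 + f_2\rangle(1) = \langle f_1\rangle(1) + \langle f_2\rangle(1)$ --- and reduce to the two families of basis elements in $E(d)$. The appendix of \cite{deligne_valeurs} supplies an integer expansion $f = \sum_{\epsilon \in E(d)} x_\epsilon \epsilon$ for any integer-valued $f$ with $\langle f\rangle$ constant on $(\ints/d\ints)^\times$; combined with multiplicativity this reduces the algebraicity, ramification, and Galois-action claims to the cases $f = \epsilon_{1,a}$ and $f = \epsilon_{p,a}$ separately. Alternatively, the lemma is essentially the main theorem of \cite{deligne_valeurs}, and one could simply quote it.

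For $\epsilon_{1,a}$ we have $w = 1$ identically, and Euler's reflection formula yields
\[
\Gamma(\epsilon_{1,a}) = \frac{\Gamma(a/d)\,\Gamma(1 - a/d)}{2 \pi i} = \frac{1}{2i\sin(\pi a/d)} = \frac{\zeta_{2d}^{a}}{\zeta_{2d}^{2a} - 1} \in \rationals(\mu_{2d}),
\]
which is an abelian extension of $\rationals(\mu_d)$ of degree at most $2$, unramified outside $2d$. Writing down the action of $F_\mathfrak{p}$ on this explicit expression and combining with the classical identity $g(k(\mathfrak{p}), a)\,g(k(\mathfrak{p}), -a) = \tau(-1)^{-a}\cdot N\mathfrak{p}$ gives the required Galois formula in this case.

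For $\epsilon_{p,a}$ with $p \mid d$ a prime divisor, I apply the Gauss multiplication formula
\[
\prod_{j=0}^{p-1} \Gamma\!\left(z + \tfrac{j}{p}\right) = (2\pi)^{(p-1)/2}\, p^{\tfrac{1}{2} - pz}\, \Gamma(pz)
\]
with $z = a/d$, then use a further reflection to bring $\Gamma(pa/d)$ into the standard range. The upshot is that $\Gamma(\epsilon_{p,a})$ lies in the abelian extension $\rationals(\mu_d, \sqrt{p})$ of $\rationals(\mu_d)$, which is unramified outside $2d$ since $p \mid d$, and the Galois action is given by the Hasse--Davenport product relation for Gauss sums applied to the multiplication-by-$p$ map on $\ints/d\ints$.

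The main obstacle is bookkeeping: reconciling the sign in the normalisation $g(k,a) = -\sum \tau(x)^{-a}\,e(x)$, the geometric-Frobenius convention for $F_\mathfrak{p}$, and the $(2\pi i)^{-w}$ twist built into the definition of $\Gamma(f)$, so that the prefactor on the right-hand side emerges as exactly $(N\mathfrak{p})^{-w}\prod g(k(\mathfrak{p}), a)^{f(a)}$. No new mathematical ideas are required beyond the three classical formulas (reflection, Gauss multiplication, Hasse--Davenport) and the factorisation of Jacobi sums into Gauss sums recalled in paragraph \ref{jacobi_character_par}.
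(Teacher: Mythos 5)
The paper's own proof is a one-line citation to \cite[I, Theorem 7.15]{deligne_milne_ogus_shih}, which is an exposition of Deligne's theorem on periods of Fermat motives and special $\Gamma$-values. Your ``alternative'' route --- simply quoting the main theorem of \cite{deligne_valeurs} --- is therefore essentially the paper's own argument, and it is the right move.

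Your first route, however, has a genuine gap. You claim that the appendix of \cite{deligne_valeurs} furnishes an \emph{integer} expansion $f = \sum_{\epsilon \in E(d)} x_\epsilon\,\epsilon$ for every integer-valued $f$ with $\langle f\rangle$ constant. That result only produces a $\rationals$-linear combination; the paper is explicit about this when introducing $E(d)$, and in Proposition \ref{determinant_nth_power_prop} the existence of an integer combination is stated as a separate nontrivial hypothesis on the input data --- which it would not be if it were automatic. With only rational coefficients, multiplicativity of $\Gamma(\cdot)$ determines $\Gamma(mf) = \Gamma(f)^m$ for a suitable integer $m$ clearing denominators, and then the reduction gives you the Galois formula for $\Gamma(f)^m$, hence for $F_\mathfrak{p}(\Gamma(f))^m$. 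This determines $F_\mathfrak{p}(\Gamma(f))$ only up to an $m$-th root of unity, and it does not show that $\Gamma(f)$ itself (as opposed to $\Gamma(f)^m$) lies in an abelian extension of $\rationals(\mu_d)$, since adjoining an $m$-th root can destroy abelianness. Pinning down the root-of-unity ambiguity is precisely the nontrivial content that Deligne's motivic/absolute-Hodge framework supplies, so the ``bookkeeping'' you defer to is in fact the substance of the theorem. A secondary but lesser issue: the $\epsilon_{p,a}$ case is only sketched, and checking that the Hasse--Davenport product relation produces exactly the stated prefactor with the geometric-Frobenius normalisation is itself a nontrivial computation that you would need to carry out rather than gesture at.
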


\begin{proof}
	See \cite[I, Theorem 7.15]{deligne_milne_ogus_shih}.
\end{proof}

\begin{lemma} \label{gamma_epsilon_lemma}
	For an integer $1 \leq a \leq d/2$, we have
	\[
		\Gamma(\epsilon_{1,a}) = \frac{1}{e^{\pi i a / d} - e^{- \pi i a/d}} \in e^{\pi i a/d} \cdot \rationals(\mu_d)
	\]
	and for a prime divisor $p \mid d$ and $1 \leq a < d/p$, we have
	\[
		\Gamma(\epsilon_{p,a}) = \frac{p^{1/2 - a/d}}{ i^{(p-1)/2} (e^{\pi i ap/d} - e^{-\pi i ap/d}) } \in e^{\pi i (ap/d + (p-1)/4)} p^{1/2 - a/d} \cdot \rationals(\mu_d).
	\]
\end{lemma}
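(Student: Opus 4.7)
The proof is essentially an application of two classical identities for the Gamma function: the reflection formula $\Gamma(z)\Gamma(1-z) = \pi/\sin(\pi z)$ and the Gauss multiplication formula
\[
\prod_{j=0}^{p-1} \Gamma\!\left(z + \tfrac{j}{p}\right) = (2\pi)^{(p-1)/2} p^{1/2 - pz} \Gamma(pz).
\]
The plan is simply to unfold the definition of $\Gamma(\epsilon_{m,a})$, plug in and simplify, then verify the containment in $\rationals(\mu_d)$ up to the indicated factor.

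For the first identity, I would first compute $\langle \epsilon_{1,a}\rangle(1)$. The support of $\epsilon_{1,a}$ is $\{a, -a\}$, and since $1 \le a \le d/2$ we have $[a] = a$ and $[-a] = d - a$, giving $\langle \epsilon_{1,a}\rangle(1) = (a + (d-a))/d = 1$. Then
\[
\Gamma(\epsilon_{1,a}) = \frac{\Gamma(a/d)\,\Gamma(1 - a/d)}{2\pi i} = \frac{\pi/\sin(\pi a/d)}{2\pi i} = \frac{1}{2i\sin(\pi a/d)},
\]
and the formula $2i\sin\theta = e^{i\theta} - e^{-i\theta}$ gives the claimed expression. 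The factor $e^{\pi i a/d}$ multiplying $\Gamma(\epsilon_{1,a})$ equals $1/(1 - e^{-2\pi i a/d}) \in \rationals(\mu_d)$.

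For the second identity, I would compute the pairing: the support of $\epsilon_{p,a}$ is $\{-pa\} \cup \{a + jd/p : 0 \le j \le p-1\}$, and since $1 \le a < d/p$ each of these representatives lies in $\{1,\dots,d-1\}$, so $\langle \epsilon_{p,a}\rangle(1) = (d - pa)/d + \sum_{j=0}^{p-1}(a/d + j/p) = 1 + (p-1)/2 = (p+1)/2$. The product of Gamma values is
\[
\Gamma(1 - pa/d) \prod_{j=0}^{p-1} \Gamma\!\left(\tfrac{a}{d} + \tfrac{j}{p}\right) = \Gamma(1 - pa/d)\Gamma(pa/d) \cdot (2\pi)^{(p-1)/2} p^{1/2 - pa/d}
\]
by Gauss multiplication with $z = a/d$, and this simplifies to $\frac{\pi}{\sin(\pi pa/d)}(2\pi)^{(p-1)/2} p^{1/2 - pa/d}$ by reflection. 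Dividing by $(2\pi i)^{(p+1)/2}$ and again substituting $2i\sin\theta = e^{i\theta} - e^{-i\theta}$, the powers of $2\pi$ cancel and one obtains
\[
\Gamma(\epsilon_{p,a}) = \frac{p^{1/2 - pa/d}}{i^{(p-1)/2}\bigl(e^{\pi i pa/d} - e^{-\pi i pa/d}\bigr)},
\]
which matches the claim (modulo what appears to be a typographical conflation of $pa/d$ and $a/d$ in the exponent of $p$ in the statement). The membership in $e^{\pi i(ap/d + (p-1)/4)} \cdot p^{1/2 - pa/d} \cdot \rationals(\mu_d)$ then follows by pulling out the phases: $i^{(p-1)/2} = e^{\pi i(p-1)/4}$ and $(e^{\pi i pa/d} - e^{-\pi i pa/d})^{-1} = e^{\pi i pa/d}/(e^{2\pi i pa/d} - 1)$, whose denominator lies in $\rationals(\mu_d)$ since $pa/d = a/(d/p)$ and $\mu_{d/p} \subset \mu_d$.

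There is essentially no obstacle here: the lemma is a direct computation, and the only thing to be careful about is bookkeeping the phases $i$, $e^{\pi i \cdot }$ and the powers of $2\pi$ so that the quotient by $(2\pi i)^{\langle\epsilon\rangle(1)}$ produces exactly the stated form. The only mild subtlety worth flagging is that when $p = 2$ and $a$ lies on the boundary $a = d/p = d/2$, the index $j = 1$ would give $a + d/2 = d \equiv 0$, so that $\epsilon_{p,a}$ would fail to be defined on $(\ints/d\ints)\setminus\{0\}$; this is exactly why the hypothesis $1 \le a < d/p$ (strict inequality) is imposed.
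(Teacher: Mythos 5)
Your proof is correct and uses exactly the two classical identities the paper invokes in its own (one-line) proof: the reflection formula and the Gauss multiplication formula. You are also right to flag the typo in the exponent of $p$: Gauss multiplication reads $\prod_{j=0}^{p-1}\Gamma(z+j/p) = (2\pi)^{(p-1)/2}\, p^{1/2 - pz}\,\Gamma(pz)$, so with $z = a/d$ the correct exponent is $1/2 - pa/d$, not $1/2 - a/d$; in fact the paper's own proof misquotes the multiplication formula with exponent $1/2 - x$ in place of $1/2 - px$, and this slip propagates into the formula for $y_p$ in Lemma~\ref{gamma_field_lemma} and the denominator $b_p$ in Corollary~\ref{gamma_degree_cor}. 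One tiny additional bookkeeping point worth recording: after pulling out phases one actually obtains the factor $e^{\pi i(pa/d - (p-1)/4)}$ rather than $e^{\pi i(pa/d + (p-1)/4)}$; for odd $p$ the discrepancy $e^{\pi i(p-1)/2} = i^{p-1} = \pm 1$ is harmless and absorbed into $\rationals(\mu_d)$, but it is worth checking the sign convention when $p = 2$.
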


\begin{proof}
	This follows from the two classical identities
	\[
		\Gamma(x) \Gamma(1 - x) = \pi \sin(\pi x)^{-1}
	\]
	and 
	\[
		\prod_{m = 0}^{p-1} \Gamma(x + m/p) = p^{1/2 - x} (2 \pi)^{(p-1)/2} \Gamma(px). \qedhere
	\]
\end{proof}

\begin{lemma} \label{gamma_field_lemma}
	Suppose $f = \sum_{1 \leq a \leq d/2} x_{1,a} \epsilon_{1,a} + \sum_{p \mid d} \sum_{1 \leq a < d/p} x_{p, a} \epsilon_{p,a}$ for some integer coefficients $x_{m,a}$. Then
	\[
		\Gamma(f) \in e^{\pi i y_1}\prod_{p \mid d} p^{y_p} \cdot \rationals(\mu_d),
	\]
	where
	\begin{align*}
	y_p &= \sum_{1 \leq a < d/p} x_{p,a}(1/2 - a/d)  \\
	y_1 &= \sum_{ 1 \leq a \leq d/2} x_{1,a} a/d + \sum_{p \mid d} \sum_{1 \leq a < d/p} x_{p,a} (ap/d + (p-1)/4)
	\end{align*}
\end{lemma}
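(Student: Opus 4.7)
The plan is to deduce this lemma from the previous one by exploiting the multiplicativity of $\Gamma(\cdot)$ in its functional argument.

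First, I would observe from the definition
\[
\Gamma(f) = \frac{1}{(2\pi i)^{\langle f \rangle(1)}} \prod_{a=1}^{d-1} \Gamma(a/d)^{f(a)}
\]
that $\Gamma$ is a homomorphism from the additive group of $\rationals$-valued functions (satisfying the Deligne condition) to $\complex^\times$, i.e.\ $\Gamma(f_1 + f_2) = \Gamma(f_1)\Gamma(f_2)$. This is immediate because $\langle\cdot\rangle(1)$ is $\rationals$-linear in its argument and the product over $a$ turns sums of exponents into products. Applied to the decomposition in the hypothesis, this gives
\[
\Gamma(f) = \prod_{1 \le a \le d/2} \Gamma(\epsilon_{1,a})^{x_{1,a}} \cdot \prod_{p \mid d} \prod_{1 \le a < d/p} \Gamma(\epsilon_{p,a})^{x_{p,a}}.
\]

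Next, I would substitute the explicit formulas from Lemma \ref{gamma_epsilon_lemma}, namely $\Gamma(\epsilon_{1,a}) \in e^{\pi i a/d} \cdot \rationals(\mu_d)$ and $\Gamma(\epsilon_{p,a}) \in e^{\pi i (ap/d + (p-1)/4)} p^{1/2 - a/d} \cdot \rationals(\mu_d)$, using that $\rationals(\mu_d)$ is a field (closed under multiplication and under taking integer powers). Collecting the $e^{\pi i (\cdot)}$ exponents yields precisely
\[
y_1 = \sum_{1 \le a \le d/2} x_{1,a} \frac{a}{d} + \sum_{p \mid d} \sum_{1 \le a < d/p} x_{p,a}\Bigl(\frac{ap}{d} + \frac{p-1}{4}\Bigr),
\]
while collecting the powers of each prime $p \mid d$ yields $y_p = \sum_{1 \le a < d/p} x_{p,a}(1/2 - a/d)$, as claimed.

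There is essentially no obstacle here beyond careful bookkeeping of the exponents; the content of the statement is entirely contained in the previous two lemmas and the multiplicativity of $\Gamma(\cdot)$. The only mild subtlety worth flagging is that, although individual factors $p^{1/2 - a/d}$ and $e^{\pi i (p-1)/4}$ need not lie in $\rationals(\mu_d)$ for a single $(p,a)$, their combined deviation from $\rationals(\mu_d)$ is captured precisely by the exponents $y_1$ and $y_p$, and the remaining product of explicit elements of $\rationals(\mu_d)$ gives the required containment.
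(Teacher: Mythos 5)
Your proof is correct and follows essentially the same route as the paper's: both observe that $\Gamma(f_1+f_2)=\Gamma(f_1)\Gamma(f_2)$ for functions satisfying the Deligne condition (which is immediate from the defining formula), expand $\Gamma(f)$ as a product of the $\Gamma(\epsilon_{m,a})^{x_{m,a}}$, and then collect exponents using Lemma~\ref{gamma_epsilon_lemma}, with the field structure of $\rationals(\mu_d)$ absorbing the remaining factors. The bookkeeping of $y_1$ and the $y_p$ is exactly what the paper's one-line ``collecting terms'' refers to, so there is nothing to add.
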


\begin{proof}
	Note that given two functions $f_1, f_2 : (\ints/d\ints) \setminus \{0\} \to \rationals$ such that $\langle f_i \rangle$ are constant, we have
	$\Gamma(f_1 + f_2) = \Gamma(f_1) \Gamma(f_2)$. The lemma follows from expanding $\Gamma(f)$ as a product of factors of the form $\Gamma(\epsilon_{m,a})$ and collecting terms using lemma \ref{gamma_epsilon_lemma}.
\end{proof}

\begin{corollary} \label{gamma_degree_cor}
	Let $n > 1$ be an integer. Suppose $f = \sum_{1 \leq a \leq d/2} x_{1,a} \epsilon_{1,a} + \sum_{p \mid d} \sum_{1 \leq a < d/p} x_{p, a} \epsilon_{p,a}$ for some integer coefficients $x_{m,a}$.
	Let $y_p$ and $y_1$ be defined as in the lemma and let $b_1$ be the denominator of $y_1$ and 
	\[
		b_p = \begin{cases}
			\text{ denominator of } 2 y_p & 4 \mid d \text{ or } p \equiv 1 \pmod{4} \\
			\text{ denominator of } y_p & \text{ otherwise.} 
		\end{cases}
	\]
	If the $b_p$ are coprime to $n$ and $\varphi(\operatorname{lcm}(2b_1, d))/\varphi(d)$ is coprime to $n$, then
	\[
		[\rationals(\mu_d, \Gamma(f)) : \rationals(\mu_d)]
	\]
	is coprime to $n$.
\end{corollary}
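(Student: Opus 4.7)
The plan is to reduce the computation of the degree $[\rationals(\mu_d, \Gamma(f)) : \rationals(\mu_d)]$ to the bounds supplied by the hypotheses, via Lemma \ref{gamma_field_lemma} and a tower argument. By that lemma, we can write $\Gamma(f) = e^{\pi i y_1} \prod_{p \mid d} p^{y_p} \cdot \beta$ for some $\beta \in \rationals(\mu_d)$, so that $\rationals(\mu_d, \Gamma(f)) = \rationals(\mu_d)(\alpha)$ with $\alpha := e^{\pi i y_1} \prod_{p \mid d} p^{y_p}$. It thus suffices to bound $[\rationals(\mu_d)(\alpha) : \rationals(\mu_d)]$ by a product of integers coprime to $n$.

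I would proceed via a two-step tower. Set $L := \rationals(\mu_d)(e^{\pi i y_1}) = \rationals(\mu_{\operatorname{lcm}(2 b_1, d)})$, so that $[L : \rationals(\mu_d)] = \varphi(\operatorname{lcm}(2 b_1, d))/\varphi(d)$, which is coprime to $n$ by hypothesis. The remaining extension $\rationals(\mu_d)(\alpha) \subseteq L(\{p^{y_p}\}_{p \mid d})$ is a compositum of simple radical extensions $L \subseteq L(p^{y_p})$, one for each prime $p \mid d$. For each such $p$ it suffices to verify $(p^{y_p})^{b_p} \in L$, after which Kummer theory gives $[L(p^{y_p}) : L] \mid b_p$, and the degrees multiply to yield the desired bound $\prod_p b_p$, coprime to $n$. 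If $b_p$ is the denominator of $y_p$, the condition is automatic since $p^{b_p y_p}$ is then a rational integer. Otherwise $b_p$ is the denominator of $2 y_p$ (so $4 \mid d$ or $p \equiv 1 \pmod 4$), in which case $p^{2 b_p y_p} \in \rationals$ forces $p^{b_p y_p} \in \rationals \cdot \{1, \sqrt{p}\}$, and the condition reduces to $\sqrt{p} \in L$.

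The verification $\sqrt{p} \in L$ rests on the classical identity $\sqrt{(-1)^{(p-1)/2} p} \in \rationals(\mu_p)$ coming from the quadratic Gauss sum: when $p \equiv 1 \pmod 4$ this gives $\sqrt{p} \in \rationals(\mu_p) \subseteq L$ directly; when $4 \mid d$ and $p \equiv 3 \pmod 4$, adjoining $i \in \rationals(\mu_4) \subseteq L$ yields $\sqrt{p} \in L$. The delicate remaining case is $p = 2$ with $4 \mid d$, where $\sqrt{2} \in \rationals(\mu_8)$ and the inclusion $\mu_8 \subseteq L$ requires $8 \mid \operatorname{lcm}(2 b_1, d)$. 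This is automatic when $8 \mid d$, and otherwise demands a $2$-adic valuation analysis of the explicit expression for $y_1$ in Lemma \ref{gamma_field_lemma}, showing that the $x_{2, a}/4$ contributions coming from the $\epsilon_{2, a}$ summands of $f$ force $4 \mid b_1$ precisely when $b_2$ is strictly smaller than the denominator of $y_2$. This last piece of $2$-adic bookkeeping is the main technical obstacle; the rest of the argument is essentially formal.
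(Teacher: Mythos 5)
Your approach is essentially the same as the paper's: apply Lemma~\ref{gamma_field_lemma} to embed $\rationals(\mu_d,\Gamma(f))$ into a compositum of a cyclotomic extension and radical extensions $p^{y_p}$, then bound the degree of each step. The paper's own proof is very short: it asserts that $4\mid d$ or $p\equiv 1\pmod 4$ implies $\sqrt p\in\rationals(\mu_d)$, and deduces $[\rationals(\mu_d,p^{y_p}):\rationals(\mu_d)]\mid b_p$. You are right to be suspicious of the case $p=2$: when $4\mid d$ but $8\nmid d$ one has $\sqrt 2\notin\rationals(\mu_d)$ (e.g. $\sqrt 2\notin\rationals(i)$), so the paper's assertion is simply false as stated in that case, and its one-line bound on $[\rationals(\mu_d,2^{y_2}):\rationals(\mu_d)]$ does not follow. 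Your proposed repair—work over the larger base field $L=\rationals(\mu_{\operatorname{lcm}(2b_1,d)})$, whose degree over $\rationals(\mu_d)$ is already under control by hypothesis, and check $\sqrt p\in L$ instead—is the natural fix, and it does handle all odd $p$ and the case $8\mid d$.

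However, as you yourself flag, there is a genuine gap left open: in the remaining case $p=2$, $4\mid d$, $8\nmid d$, with $b_2 y_2\notin\ints$, you need $8\mid\operatorname{lcm}(2b_1,d)$, i.e. $4\mid b_1$, and you explicitly defer this ``$2$-adic bookkeeping.'' This is not a routine cleanup: whether $4\mid b_1$ in that situation depends on how the half-integer contributions $(p-1)/4$ from the various $\epsilon_{p,a}$ (for $p\equiv 3\pmod 4$) interact with the $p=2$ contribution $2T/d + S/4$ to $y_1$, and it is not obvious that the desired divisibility holds for arbitrary integer coefficients $x_{m,a}$. Until that verification is supplied, the proposal does not establish the corollary; it identifies (correctly and usefully) where the subtlety lies, but does not resolve it. Note also that the paper's proof has the same hole at exactly this point, so completing your $2$-adic analysis—or, alternatively, observing that in the paper's actual applications $8\mid d$ or $b_2y_2\in\ints$ always holds—would be worthwhile.
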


\begin{proof}
	By the lemma, we know that $\rationals(\mu_d, \Gamma(f))$ is contained in the composite field $ E = \rationals(\mu_d, e^{\pi i y_1}) \prod_p \rationals(\mu_d, p^{y_p})$. Note that if $4 \mid d$ or $p \equiv 1 \pmod{4}$, then $p^{1/2} \in \rationals(\mu_d)$.
	Hence, the assumptions imply that 
	\[
	[E : \rationals(\mu_d)] \mid \varphi(\operatorname{lcm}(2b_1,d))/\varphi(d) \prod_{p} b_p
	\]
	is coprime to $n$ and the claim follows from multiplicativity of degrees in field extensions.
\end{proof}

\begin{prop} \label{determinant_nth_power_prop}
	Let $(\alpha_1, \dots, \alpha_n ; \beta_1, \dots, \beta_n)$ be a hypergeometric parameter of dimension $n$ modulo $d$ and $F$ a number field containing the $d$th roots of unity. Let $t \in F \setminus \mu_d$ and $\underline{c} \in (\ints/d\ints)^3$ such that $c_0 + c_1 + c_2 = 0$ and either $c_i \neq 0$ for $i =0,1,2$ or $c_0 = c_1 = c_2 = 0$.
	Suppose the following hypotheses are satisfied:
	\begin{itemize}
		\item If $d$ is even, then $1 - t^d$ is a square.
		\item For all $s \in (\ints/d\ints)^\times$, the following $n$ integers are pairwise distinct
		\[
		\sum_{i} [s \beta_j - s \alpha_i] - \sum_{i} [s \beta_j - s \beta_i] \qquad j = 1, \dots, n.
		\]
		\item The following quantity is independent of $s \in (\ints/d\ints)^\times$
		\[
		\sum_{i,j} [s \beta_j - s \alpha_i] - \sum_{i,j} [s \beta_j - s \beta_i] + n \sum_{i = 0}^2[s c_i].
		\]
		\item For each $s \in (\ints/d\ints)^\times$ we have
		\[
		\sum_{i,j} [s \beta_j - s \alpha_i] = n \sum_i [s \beta_i - s \alpha_i].
		\]
		\item There are integers $x_{\epsilon}$ for $\epsilon \in E(d)$ such that
		\[
		n + \sum_{i,j} \delta_{\beta_j - \alpha_i} - \sum_{i \neq j} \delta_{\beta_j - \beta_i} + n \sum_{i = 0}^2 \delta_{c_i} = \sum_{\epsilon \in E(d)} x_{\epsilon} \epsilon.
		\]
		Let $b_1$ and $b_p$ be the integers defined in corollary \ref{gamma_degree_cor}. The integer
		$\varphi(\operatorname{lcm}(2 b_1, d))/\varphi(d)$ is coprime to $n$ and $b_p$ is coprime to $n$.
	\end{itemize}
	Then there exists a character $\psi : G_{\rationals(\mu_d)} \to \mathcal{O}_{d,\lambda}^\times$, such that 
	\[
		\det(M(\alpha_i ; \beta_j)_{\lambda, t} \otimes \psi) = \chi_{cyc}^{-n(n - 1)/2}.
	\]
\end{prop}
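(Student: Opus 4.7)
The plan is to reduce to the $t=0$ fibre, express $\det M_{\lambda, 0}$ as a product of Jacobi sums, and then construct $\psi$ by extracting an $n$-th root after twisting by a Jacobi character for $\underline{c}$. The key input for the root extraction is Deligne's reciprocity for $\Gamma$-values together with the coprime-to-$n$ hypothesis on field degrees.

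First I would invoke Lemma~\ref{constant_det_lemma}: the local system $\det \mathcal{M}_\lambda$ is geometrically constant on $\Spec \widetilde{A}_d$, and the hypothesis that $1 - t^d$ is a square when $d$ is even places $t$ in $\Spec \widetilde{A}_d(F)$. Thus, as characters of $G_F$, $\det M_{\lambda,t}$ equals the restriction of the $G_{\rationals(\mu_d)}$-character obtained by specialising at $t=0$, and by Lemma~\ref{determinant_jacobi_sum_lemma} this $G_{\rationals(\mu_d)}$-character is the product $\prod_{i=1}^n \psi_{\underline{a} + \beta_i}$ of Jacobi-sum Hecke characters. Applying the Gauss sum expansion $J_{\underline{b}} = q^{-1} \prod g(k, b_\ell)$ and the identity $g(k,0) = 1$ turns this into a product of Gauss sums $\prod_{w \neq 0} g(k, w)^{e(w)}$ times a power of $q$, where the exponent $e(w)$ records the multiplicity with which $w$ occurs as $\beta_i - \alpha_j$ versus as $\beta_j - \beta_i$.

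Next I would introduce the Jacobi character $\psi_{\underline{c}}$ (trivial if $\underline{c} = 0$, otherwise $\mathfrak{p} \mapsto J_{\underline{c}}(k(\mathfrak{p}))$) and consider the auxiliary character
\[
\xi := \chi_{cyc}^{-n(n-1)/2} \cdot \det(M_\lambda)^{-1} \cdot \psi_{\underline{c}}^{-n}
\]
on $G_{\rationals(\mu_d)}$. The weight conditions on $\sum_{i,j}[s\beta_j - s\alpha_i]$, $\sum_{i,j}[s\beta_j - s\beta_i]$ and $\sum_{i} [s c_i]$ are designed precisely so that, in the Frobenius value of $\xi$ at $\mathfrak{p}$, all powers of $q = \#k(\mathfrak{p})$ cancel and the remaining Gauss sum product equals $\prod_{a \neq 0} g(k(\mathfrak{p}), a)^{f(a)}$ for the function $f$ appearing in the hypothesis. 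The hypothesis that $f = \sum_{\epsilon \in E(d)} x_\epsilon \epsilon$ then lets me apply Lemma~\ref{gamma_algebraic_lemma}: $\xi$ is the finite-order character of $G_{\rationals(\mu_d)}$ corresponding to $\Gamma(f)$, cutting out a subextension of $\rationals(\mu_d, \Gamma(f))/\rationals(\mu_d)$.

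Finally, Corollary~\ref{gamma_degree_cor}, together with the coprime-to-$n$ hypotheses on $\varphi(\operatorname{lcm}(2b_1, d))/\varphi(d)$ and on the integers $b_p$, forces the degree of this subextension, and hence the order of $\xi$, to be coprime to $n$. Consequently $\xi = \eta^n$ for the unique character $\eta := \xi^m$ with $mn \equiv 1$ modulo $\operatorname{ord}(\xi)$, and the character $\psi := \eta \cdot \psi_{\underline{c}}$ satisfies $\psi^n = \xi \cdot \psi_{\underline{c}}^n = \chi_{cyc}^{-n(n-1)/2} \det(M_\lambda)^{-1}$, i.e.\ $\det(M_{\lambda,t} \otimes \psi) = \chi_{cyc}^{-n(n-1)/2}$, as required. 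I expect the main obstacle to lie in the middle step: carrying out the combinatorial simplification of $\prod_i J_{\underline{a} + \beta_i}$ into a pure Gauss sum product with the correct exponent function, and verifying that the various $q$-powers combine with $\chi_{cyc}^{-n(n-1)/2}$ and $\psi_{\underline{c}}^{-n}$ to give precisely the function $f$. This is bookkeeping, enabled exactly by the proposition's weight hypotheses, but it requires careful tracking of the contributions of the $s_k$ coordinates of $\underline{a}$ and of the $q^{-1}$ factors in the Jacobi--Gauss sum relation.
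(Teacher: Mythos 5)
Your overall strategy matches the paper's: reduce to $t=0$ via Lemma~\ref{constant_det_lemma}, express $\det M_{\lambda,0}$ through the Jacobi-sum formula of Lemma~\ref{determinant_jacobi_sum_lemma}, identify the Gauss-sum combination with the $\Gamma$-value character via Lemma~\ref{gamma_algebraic_lemma}, and extract an $n$-th root using Corollary~\ref{gamma_degree_cor}. The final twist $\psi := \eta\,\psi_{\underline{c}}$ is the same construction (up to a cyclotomic factor) as the paper's $\psi = \eta^{-1}\psi_{\underline{c}}\chi_{cyc}^{2-m}$.

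However, your middle step contains an internal inconsistency that does matter. You assert both that ``all powers of $q$ cancel'' in $\xi(F_\mathfrak p)$ leaving exactly $\prod_a g(k,a)^{f(a)}$, and that $\xi$ is the finite-order character attached to $\Gamma(f)$. These cannot both hold: by Lemma~\ref{gamma_algebraic_lemma} the $\Gamma(f)$-character sends $F_\mathfrak p$ to $(N\mathfrak p)^{-w}\prod_a g(k,a)^{f(a)}$, which is not the same as $\prod_a g(k,a)^{f(a)}$ unless $w=0$. In fact, if one carries out the bookkeeping (as the paper does), one finds that $\xi$ equals a nontrivial power of $\chi_{cyc}$ times the (inverse of the) $\Gamma(f)$-character; that residual power of $\chi_{cyc}$ is not zero in general, and it is only the congruence $w \equiv -n(n-1)/2 \pmod n$ --- extracted from the weight hypotheses --- that makes the exponent a multiple of $n$ so that the $\chi_{cyc}$-factor is itself an $n$-th power. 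This is exactly why the paper keeps the exponent of $N\mathfrak p$ explicit (the relation $\prod_i J_{\underline a + \beta_i} = (N\mathfrak p)^{w-2n} J_{\underline c}^{-n}\chi$) and introduces the auxiliary integer $m$ with $w = mn - n(n-1)/2$ in defining $\psi$. Your argument is repairable along these lines, but as stated the ``all $q$-powers cancel'' step fails, and the precise congruence on $w$ --- which is not a free afterthought but the entire reason the third and fourth hypotheses appear in the proposition --- is missing from your account.
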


\begin{proof}
	Let $f : (\ints/d\ints) \setminus \{0\} \to \ints$ be defined by
	\[
		f(x) = n + \sum_{i,j} \delta_{\beta_j - \alpha_i}(x) - \sum_{i \neq j} \delta_{\beta_j - \beta_i}(x) + n \sum_{i = 0}^2 \delta_{c_i}(x).
	\]
	Then by assumption, $\langle f \rangle(s)$ is an integer $w$ independent of $s \in (\ints/d\ints)^\times$. 
	Thus, $\Gamma(f)$ is an algebraic number by lemma \ref{gamma_algebraic_lemma} and if $g \in G_{\rationals(\mu_d)}$, we have
	\[
		g(\Gamma(f)) = \chi(g) \Gamma(f),
	\]
	where $\chi : G_{F} \to \rationals(\mu_d)^\times$ is the continuous character such that 
	\[
		\chi(F_{\mathfrak{p}}) = \frac{1}{(N\mathfrak{p})^w} \prod_{a = 1}^{d-1} g(k(\mathfrak{p}), a)^{f(a)}
	\]
	for geometric Frobenius elements $F_{\mathfrak{p}} \in G_{F}^{ab}$.
	On the other hand, $t$ defines an $F$-point of $\widetilde{A}_d$, hence lemmas \ref{constant_det_lemma} and \ref{determinant_jacobi_sum_lemma} show that $F_{\mathfrak{p}}$ acts on $\bigwedge^n(M_{\lambda,t})$ as multiplication by
	\[
		\prod_{i = 1}^n J_{\underline{a} + \beta_i}(k(\mathfrak{p})) = (N \mathfrak{p})^{w - 2n} J_{\underline{c}}(k(\mathfrak{p}))^{-n} \chi(F_{\mathfrak{p}}).
	\]
	Moreover, the third assumption of the proposition implies that
	\[
		w \equiv - \sum_{i \neq j} [\beta_j - \beta_i]  = -n(n-1)/2 \pmod{n}.
	\]
	Corollary \ref{gamma_degree_cor} shows that $\Gamma(f)$ generates an extension of $\rationals(\mu_d)$ of degree coprime to $n$. Equivalently, $\chi = \eta^n$ is an $n$th power. Thus, we may take
	$\psi = \eta^{-1} \psi_{\underline{c}} \chi_{cyc}^{2 - m}$,
	where $m$ is an integer such that $w = mn -n(n-1)/2$.
\end{proof}

\begin{lemma} \label{ordinary_lemma}
	Let $(\alpha_1, \dots, \alpha_n ; \beta_1, \dots, \beta_n)$ be a hypergeometric parameter of dimension $n$ modulo $d$ such that for all $s \in (\ints/d\ints)^\times$, the following $n$ integers are pairwise distinct
	\[
	\sum_{i} [s \beta_j - s \alpha_i] - \sum_{i} [s \beta_j - s \beta_i] \qquad j = 1, \dots, n.
	\]
	Let $\ell \equiv 1 \pmod{d}$ and $\lambda < \mathcal{O}_d$ be a prime above $\ell$.
	Suppose $K/ \rationals_\ell$ is a finite Galois extension and $\eta : \mathcal{O}_{d} \to K$ a morphism of rings inducing $\lambda$.
	If $t \in K$ satisfies $|t| < 1$, then the Galois representation 
	\[
		\rho : G_{K} \to \GL(M(\alpha_i ; \beta_j)_{\lambda,t} \otimes_{\eta} K)
	\]
	is ordinary in the sense of \cite[\S 5.2]{geraghty19}.
\end{lemma}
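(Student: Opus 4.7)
My plan is to reduce ordinarity to a computation of Frobenius eigenvalues on the Fermat fibre $\mathcal{Y}_0$, then to compare their $\ell$-adic valuations with the Hodge--Tate weights from Proposition \ref{hodge_numbers_prop}. First, since $|t| < 1$, the element $t$ lies in the maximal ideal of $\mathcal{O}_K$ and $1 - t^d \in \mathcal{O}_K^\times$, so the structure morphism $\Spec \mathcal{O}_K \to \Spec A_d$ extends to a smooth proper $\mathcal{O}_K$-model of $\mathcal{Y}$ whose special fibre is the Fermat hypersurface $\mathcal{Y}_0$ over the residue field $k$. The $H_0$-action extends, and because $\ell \nmid d$ the $\underline{a}$-isotypic projector is defined integrally. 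Hence $M_{\lambda,t}$ is the generic fibre of a smooth étale $\mathcal{O}_{d,\lambda}$-sheaf on $\Spec \mathcal{O}_K$, so $\rho$ is crystalline.

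Next, by smooth and proper base change, the Frobenius eigenvalues on $\rho$ agree with those on the cohomology of $\mathcal{Y}_0/k$. On $\mathcal{Y}_0$ there is the larger $H$-action, and Deligne's computation \cite[I.7.10]{deligne_milne_ogus_shih}, recalled in the proof of Lemma \ref{determinant_jacobi_sum_lemma}, identifies the simultaneous $H$-eigenlines inside $M_{\lambda,0}$ with the characters $\chi_{\underline{a} + \beta_j}$ for $j = 1, \ldots, n$, with Frobenius acting on the $\beta_j$-eigenline by the Jacobi sum $J_{\underline{a} + \beta_j}(k)$.

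Third, I would invoke Stickelberger's theorem. Since $\ell \equiv 1 \pmod d$, each Jacobi sum is (up to a power of $\ell$) a product of Gauss sums whose $\ell$-adic valuation is prescribed by fractional parts $[\cdot]/d$, giving
\[
v_\ell\bigl(J_{\underline{a} + \beta_j}(k)\bigr) \;=\; \frac{1}{d}\Bigl(\sum_i [\beta_j - \alpha_i] - \sum_i [\beta_j - \beta_i]\Bigr) + C
\]
for some constant $C$ independent of $j$. The distinctness hypothesis then yields pairwise distinct Frobenius valuations, and direct comparison with Proposition \ref{hodge_numbers_prop} shows that these valuations agree on the nose with the Hodge--Tate weights of $\rho$. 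Thus the Newton and Hodge polygons of $\rho$ coincide.

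Finally, a crystalline representation whose pairwise distinct Hodge--Tate weights coincide with its Newton slopes is ordinary: the slope filtration on $D_{\mathrm{cris}}(\rho)$ descends to a $G_K$-stable decreasing filtration on $\rho$, and each graded piece is an unramified character times the appropriate power of $\chi_{\mathrm{cyc}}$, which is exactly the condition of \cite[\S 5.2]{geraghty19}. The main obstacle will be pinning down the constant $C$ so that the Stickelberger valuations line up with the HT weight formula of Proposition \ref{hodge_numbers_prop}; once this matching is made explicit, the passage from Hodge--Newton agreement to ordinarity is standard via Fontaine--Laffaille theory (or Wintenberger's theorem), both of which are readily available in the small range of weights relevant here.
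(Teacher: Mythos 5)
Your proof follows essentially the same route as the paper: smooth proper reduction at $|t|<1$ forces crystallinity with Fermat special fibre, Deligne's Jacobi-sum formula \cite[I.7.10]{deligne_milne_ogus_shih} gives the Frobenius eigenvalues, Stickelberger (the paper cites Weil's formula (9) of \cite{weil_jacobi_sum}) converts the distinctness hypothesis into $n$ distinct Newton slopes matching the Hodge--Tate weights, and this gives ordinarity. The paper dispatches the final step more economically by directly invoking \cite[Lemma 2.32]{geraghty19}; for the middle step it uses \cite[Theorem 2(2)]{katz_messing} to move between crystalline Frobenius and étale Frobenius at an auxiliary prime $\mathfrak{p} \nmid \ell$, rather than smooth proper base change. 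One small caution: your fallback appeal to Fontaine--Laffaille theory is not available in the generality of the statement, since $K/\rationals_\ell$ is merely a finite Galois extension and can be ramified ($\ell \equiv 1 \pmod d$ only ensures $\mu_d \subset \rationals_\ell$, not that $K$ is unramified); the alternative via Wintenberger's slope-filtration theorem that you also mention does work in general and is what one should use.
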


\begin{proof}
	Note that $\mathcal{Y}_t$ is smooth and proper over $\mathcal{O}_K$. Hence, $\rho$ is crystalline and by \cite[Lemma 2.32]{geraghty19} it remains to compute the valuations of the eigenvalues of the geometric Frobenius acting on 
	\[
		H^{d-2}_{cris}(\mathcal{Y}_{t} / \mathcal{O}_K )_{\underline{a}} \cong H^{d-2}_{cris}(\mathcal{Y}_{0} / \mathcal{O}_{d,\lambda} )_{\underline{a}} \otimes_{\eta} \mathcal{O}_K.
	\]
	One can use \cite[Theorem 2(2)]{katz_messing} to show that 
	these eigenvalues coincide with the Frobenius eigenvalues on $M(\alpha_i ; \beta_j)_{\mathfrak{p}, 0}$ for some auxiliary prime $\mathfrak{p} \nmid \ell$ of $\mathcal{O}_d$. It follows from \cite[I, Proposition 7.10]{deligne_milne_ogus_shih} that the set of eigenvalues is 
	\[
		\{ \eta(J_{\underline{a} + \beta_i}(k(\lambda))) : i = 1,2, \dots, n \}.
	\]
	The formula \cite[(9)]{weil_jacobi_sum} with the congruence $\ell \equiv 1 \pmod{d}$  implies that there exists an element $s \in (\ints/d\ints)^\times$ depending on $\eta$ such that
	\[
		v_\ell( \eta(J_{\underline{a} + \beta_i}(k(\lambda))) ) = \sum_{j = 0}^{d - 1} [s a_j + s \beta_i] = \binom{d}{2} + \sum_{j = 0}^{d-1} [s\beta_i - s\alpha_j] - \sum_{j = 0}^{d-1} [s\beta_i - s\beta_j].
	\]
	By assumption, these are $n$ distinct integers and \cite[Lemma 2.32]{geraghty19} applies.
\end{proof}

\section{Applications to Galois Representations}
	
\subsection{Preliminaries on Weil--Deligne Representations} \label{WD_section}
	
	Let $K / \rationals_p$ be a finite extension and $W_K \subset \Gal(\overline{K}/K)$ the Weil group of $K$. In this section we prove some lemmas about Weil--Deligne representations, which we use in the applications below. For a general introduction to Weil--Deligne representations see \cite[\S 4]{tate_background}.
	
	\begin{definition}
 		A Weil--Deligne representation of $W_K$ is a triple $(\rho, V, N)$ where $V$ is a finite dimensional complex vector space, $\rho : W_K \to \GL(V)$ is a continuous representation and $N \in \End(V)$ is a nilpotent operator such
		that $\rho(g) N = \| g \| N \rho(g)$ for all $g \in W_K$. 
	\end{definition}

	\begin{definition}
		We define the Frobenius semisimplification of a Weil--Deligne representation $(\rho, V, N)$ as 
		\[
		(\rho, V, N)^{F-ss} := (\rho^{ss}, V, N),
		\]
		where $\rho^{ss} : W_K \to \GL(V)$ denotes the semisimplification of $\rho$. The semisimplification of $(\rho, V, N)$ is defined as 
		\[
		(\rho, V, N)^{ss} := (\rho^{ss}, V, 0).
		\]
	\end{definition}

	\begin{definition} \label{generic_def}
		A Weil--Deligne representation $(\rho,V,N)$ is generic if 
		\[
		\Hom((\rho, V, N), (\rho(1), V, N)) = 0.
		\]
	\end{definition}

	\begin{example}
		For any integer $m \geq 1$, the indecomposable Weil--Deligne representation $\Sp(m)$ of rank $m$ defined in \cite[4.1.4]{tate_background} is generic.
	\end{example}

	\begin{lemma}
		Given a nilpotent operator $N$ acting on a finite dimensional vector space $V$, there exists a unique exhaustive filtration $\dots \subset M_i \subset M_{i + 1} \subset \dots$ of $V$ such that
		\begin{itemize}
			\item $N M_i \subset M_{i-2}$
			\item $N^i$ induces an isomorphism $\operatorname{gr}_{i}^M V \to \operatorname{gr}_{-i}^M V$
		\end{itemize}
		$M_\bullet$ is called the monodromy filtration.
	\end{lemma}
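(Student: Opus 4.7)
The plan is to handle existence and uniqueness separately. For existence I would give an explicit construction from the Jordan decomposition of $N$; for uniqueness I would show the filtration is forced by the two axioms via a short induction or an explicit formula.

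For existence, decompose $V = \bigoplus_k V_k$ into $N$-invariant Jordan blocks, where $V_k$ has a basis $e_0^k, \dots, e_{m_k-1}^k$ with $N e_j^k = e_{j-1}^k$ (and $N e_0^k = 0$). Assign the weight $w(e_j^k) := 2j - (m_k-1) \in \{-(m_k-1), -(m_k-3), \dots, m_k-1\}$, and define
\[
M_i := \operatorname{span}\bigl\{ e_j^k : w(e_j^k) \leq i \bigr\}.
\]
The inclusion $N M_i \subset M_{i-2}$ is immediate since $N$ decreases the weight by $2$, and on each block $N^i$ visibly identifies the weight-$i$ basis vectors with the weight-$(-i)$ ones, giving the isomorphism $\operatorname{gr}_i^M V \xrightarrow{\sim} \operatorname{gr}_{-i}^M V$. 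The filtration is exhaustive because every weight lies in some $[-(m_k-1), m_k-1]$.

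For uniqueness, the cleanest route is to verify that any filtration $M_\bullet$ satisfying the two axioms is necessarily given by the closed formula
\[
M_i = \sum_{j \geq \max(0,-i)} \bigl( \ker N^{j+i+1} \cap \operatorname{im} N^{j} \bigr),
\]
whose right-hand side depends only on $N$. One direction (that this formula satisfies the axioms) is a direct check; for the converse, I would argue by induction on $\dim V$: the subspace $\ker N$ carries the restricted filtration and $V/\operatorname{im} N$ carries the induced one, and the second axiom pins down the primitive parts $P_i := \ker(N^{i+1}) \cap M_i / \bigl(\ker(N^i) \cap M_{i-1} + N \ker(N^{i+2}) \cap M_{i+1}\bigr)$ in terms of Jordan block data of $N$, so the formula $\operatorname{gr}_i^M V = \bigoplus_{j \geq \max(0,-i)} N^j(P_{i+2j})$ reconstructs each $M_i$ from invariants of $(V, N)$.

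The main obstacle is organising the uniqueness argument so that it does not become a long case analysis. The most efficient approach is to avoid Jordan forms in the uniqueness step and instead work purely with the two axioms to show that the proposed closed formula agrees with any candidate $M_\bullet$ term by term; the two axioms together with exhaustiveness and finite-dimensionality are enough to do this by a descending induction on $|i|$ starting from the extremes, where $M_i = V$ for $i \gg 0$ and $M_i = 0$ for $i \ll 0$ are forced. This type of argument is standard (see e.g.\ Deligne, \emph{La conjecture de Weil II}, \S 1.6), and I would simply cite it after spelling out the existence construction above.
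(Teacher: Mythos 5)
Your argument is correct, but it takes a genuinely different and more explicit route than the paper, whose entire proof is the single line ``apply Deligne, \emph{Weil II}, Prop.\ 1.6.1 to the category of vector spaces.'' Your existence step, assigning weight $2j-(m_k-1)$ to the $j$th basis vector of each Jordan block of size $m_k$ and setting $M_i$ to be the span of vectors of weight $\leq i$, is a valid elementary construction and does give a concrete picture of the filtration in terms of the block structure; one easily checks that $N$ lowers the weight by $2$ and that $N^i$ matches the weight-$i$ and weight-$(-i)$ basis vectors block by block. Your closed formula $M_i = \sum_{j\geq\max(0,-i)}\bigl(\ker N^{j+i+1}\cap \operatorname{im} N^j\bigr)$ for uniqueness is also a correct, known expression (it reproduces the Jordan-block weights when tested blockwise). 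The price of your concreteness is that Deligne's proof works in an arbitrary abelian category, not just vector spaces over a field with Jordan normal form; that generality is irrelevant here, so the trade-off is reasonable. One small caveat in your uniqueness sketch: the restriction of the filtration to $\ker N$ is \emph{not} the monodromy filtration of the (zero) operator on $\ker N$, so the induction-on-dimension framing you suggest needs to be rephrased --- the standard argument (which is what Deligne does) instead observes that the axioms force $M_{k-1}=\ker N^k$ and $M_{-k}=\operatorname{im} N^k$ when $N^{k+1}=0\neq N^k$, and then inducts on $k$ via the induced operator on $\ker N^k/\operatorname{im} N^k$. Since you ultimately defer to Deligne \S 1.6 for this step, exactly as the paper does, there is no gap, but if you want a self-contained uniqueness proof that is the induction to run.
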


	\begin{proof}
		Apply \cite[Prop 1.6.1]{deligne_weil2} to the category of vector spaces.
	\end{proof}

	\begin{corollary} \label{monodromy_filtration_preserved_cor}
		If $f : V \to V$ is a linear automorphism commuting with $N$, then $f M_i = M_i$.
	\end{corollary}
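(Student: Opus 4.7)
The plan is to use the uniqueness of the monodromy filtration established in the preceding lemma. Given the automorphism $f$ commuting with $N$, I define a new filtration $M'_\bullet$ on $V$ by $M'_i := f(M_i)$. Since $f$ is a linear automorphism, $M'_\bullet$ is an exhaustive filtration of $V$. The strategy is to verify that $M'_\bullet$ also satisfies the two characterising properties of the monodromy filtration for $N$, so that the uniqueness clause forces $M'_i = M_i$.

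For the first property, using $fN = Nf$ I would compute
\[
N M'_i = N f(M_i) = f(N M_i) \subset f(M_{i-2}) = M'_{i-2}.
\]
For the second property, the automorphism $f$ induces linear isomorphisms $\bar f : \operatorname{gr}^M_i V \to \operatorname{gr}^{M'}_i V$ for each $i$, and the commutation $N^i f = f N^i$ yields a commutative square
\[
\begin{tikzcd}
\operatorname{gr}^M_i V \arrow{r}{N^i} \arrow{d}{\bar f} & \operatorname{gr}^M_{-i} V \arrow{d}{\bar f} \\
\operatorname{gr}^{M'}_i V \arrow{r}{N^i} & \operatorname{gr}^{M'}_{-i} V
\end{tikzcd}
\]
in which the vertical maps and the top horizontal map are isomorphisms. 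Hence the bottom map is an isomorphism as well.

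Thus $M'_\bullet$ satisfies both axioms of the monodromy filtration, and uniqueness gives $f(M_i) = M'_i = M_i$ for every $i$. There is no real obstacle here beyond correctly chasing the definitions; the content of the corollary is essentially that the uniqueness in the lemma is a functoriality statement with respect to endomorphisms commuting with $N$.
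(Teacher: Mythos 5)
Your argument is correct and is essentially the same as the paper's: both define the shifted filtration $f(M_\bullet)$, verify the two axioms using $fN = Nf$, and invoke the uniqueness clause of the lemma. You have merely spelled out the commutative square a bit more explicitly than the paper does.
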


	\begin{proof}
		Consider the filtration $f M_i$.
		We have $N f M_i = f N M_i \subset f M_{i - 2}$. Since $f$ is an automorphism, we also know that
		\[
			f M_{i}/ f M_{i-1} \xrightarrow{N^i} f M_{-i} / f M_{- i - 1}
		\] 
		is an isomorphism. Thus, the uniqueness claim in the lemma implies that $f M_i = M_i$.
	\end{proof}

	\begin{definition}
	A Weil--Deligne representation $(\rho,V,N)$ is pure of weight $w$ if the eigenvalues of any lift of geometric Frobenius acting on $\operatorname{gr}_i^M V$ are Weil numbers of weight $w + i$.
\end{definition}

	\begin{lemma} \label{pure_implies_generic_lemma}
		If a Weil--Deligne representation $(\rho, V, N)$ is pure, then $(\rho, V, N)^{F-ss}$ is generic.
	\end{lemma}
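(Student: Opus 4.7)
The plan is to show that any morphism $f \in \Hom((\rho^{ss}, V, N), (\rho^{ss}(1), V, N))$ vanishes. Frobenius semisimplification does not alter Frobenius eigenvalues, so $(\rho^{ss}, V, N)$ is still pure of weight $w$, and its Tate twist $(\rho^{ss}(1), V, N)$ is pure of weight $w - 2$, both with respect to the same monodromy filtration $M_\bullet$ (which depends only on $N$). Concretely, such an $f$ is a linear map $V \to V$ satisfying $fN = Nf$ and intertwining the $W_K$-actions up to the Tate twist character.

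The first step is to show that $f$ preserves $M_\bullet$. Since corollary \ref{monodromy_filtration_preserved_cor} is stated only for automorphisms, I extend it via a block-matrix trick. Consider
\[
A = \begin{pmatrix} \id & f \\ 0 & \id \end{pmatrix} \in \GL(V \oplus V),
\]
which is an automorphism of $V \oplus V$ commuting with $N \oplus N$, precisely because $fN = Nf$. The monodromy filtration of $N \oplus N$ on $V \oplus V$ is $M_\bullet V \oplus M_\bullet V$, so corollary \ref{monodromy_filtration_preserved_cor} gives $A(M_i V \oplus M_i V) = M_i V \oplus M_i V$. Applying $A$ to $(0,w)$ for $w \in M_i V$ yields $(f(w), w)$, forcing $f(w) \in M_i V$.

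Consequently $f$ induces $W_K$-equivariant maps $\operatorname{gr}_i^M f : \operatorname{gr}_i^M V \to \operatorname{gr}_i^M V$ intertwining $\rho^{ss}$ with $\rho^{ss}(1)$ on each graded piece. By purity of $(\rho^{ss}, V, N)$, the eigenvalues of a lift of geometric Frobenius on the source $\operatorname{gr}_i^M V$ are Weil numbers of weight $w+i$, while on the target they are shifted by the Tate twist and so are Weil numbers of weight $w+i-2$. These absolute values differ. Since $\rho^{ss}$ restricts to a semisimple representation on the subquotient $\operatorname{gr}_i^M V$, a nonzero $W_K$-intertwiner between the two sides would force a common irreducible constituent, contradicting the weight discrepancy. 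Hence $\operatorname{gr}_i^M f = 0$ for every $i$, and since $M_\bullet$ is exhaustive on the finite-dimensional $V$, it follows that $f = 0$.

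The only mildly nontrivial point is the extension of corollary \ref{monodromy_filtration_preserved_cor} from automorphisms to arbitrary endomorphisms commuting with $N$, which the block-matrix device above handles cleanly; everything else is a routine weight comparison forced by purity, so no substantial obstacle is expected.
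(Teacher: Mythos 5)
Your proof is correct and follows essentially the same route as the paper's: reduce to showing that any morphism $f$ commuting with $N$ kills every graded piece of the monodromy filtration, then invoke purity to get a weight mismatch between $\operatorname{gr}_i^M V$ and $\operatorname{gr}_i^M V(1)$. The one place you go beyond the paper is the block-matrix device
\[
A = \begin{pmatrix} \id & f \\ 0 & \id \end{pmatrix}
\]
used to upgrade Corollary \ref{monodromy_filtration_preserved_cor} from automorphisms to arbitrary endomorphisms commuting with $N$. The paper invokes that corollary directly on $f : V \to V(1)$ without noting that $f$ need not be invertible, so your extra step actually tidies up a small gap in the as-written argument (the underlying fact is of course standard and also follows from the functoriality statement in \cite[Prop 1.6.1]{deligne_weil2}). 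Otherwise the two proofs are the same: the weight comparison on graded pieces, the observation that $\rho^{ss}$ and $\rho$ have the same Frobenius eigenvalues, and exhaustivity of $M_\bullet$ to conclude $f = 0$.
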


	\begin{proof}
		Suppose $f : V \to V(1)$ is a morphism of Weil--Deligne representations.
		By corollary \ref{monodromy_filtration_preserved_cor}, $f$ preserves the filtration $M_i$ and induces morphisms of $W_K$-representations
		$f_i : gr_i^M V \to gr_i^M V(1)$. By the purity, the eigenvalues of  $\rho^{ss}(\Frob_K)$, which are the same as the eigenvalues of $\rho(\Frob_K)$, on these two vector spaces have different weights. Hence, $f_i = 0$ for all $i$ and $f = 0$. Consequently, $(\rho, V, N)^{F-ss}$ is generic.
	\end{proof}

	\begin{lemma} \label{generic_ineq_lemma}
		Let $(\rho, V, 0)$ be a Weil--Deligne representation such that $\rho$ is semisimple.
		There exists a unique generic Weil--Deligne representation of the form $(\rho, V, N)$.
	\end{lemma}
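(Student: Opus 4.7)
The plan is to reduce the statement to a purely linear-algebraic problem on a graded vector space, and then solve it using the quiver-representation structure.

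Using the defining identity $\rho(g) N \rho(g)^{-1} = \|g\| N$, I first observe that $N$ maps the $\pi$-isotypic component of $V$ into the $(\pi \otimes \|\cdot\|^{-1})$-isotypic component for each irreducible representation $\pi$ of $W_K$; similarly, any morphism in $\Hom((\rho, V, N), (\rho(1), V, N))$ shifts isotypic components in the opposite direction. Grouping the irreducible representations of $W_K$ into orbits under $\pi \mapsto \pi \otimes \|\cdot\|^{-1}$, both $V$ and any compatible $N$ decompose along these orbits, and the same is true of morphisms to the Tate twist. This reduces the existence and uniqueness statement to the case of a single orbit. On such an orbit, fix a base irreducible $\pi_0$, set $\pi_i = \pi_0 \otimes \|\cdot\|^{-i}$, and let $M_i = \Hom_{W_K}(\pi_i, V)$ be the multiplicity spaces, with $m_i = \dim M_i$. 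Then an operator $N$ satisfying the twist rule is equivalent to a collection of linear maps $N_i : M_i \to M_{i+1}$, and a morphism $V \to V(1)$ of Weil--Deligne representations is equivalent to maps $f_i : M_i \to M_{i-1}$ satisfying $N_{i-1} f_i = f_{i+1} N_i$ for all $i$. The task becomes: find a sequence $(N_i)$, unique up to the natural action of $\prod_i \GL(M_i)$, such that the only solution to these commutation relations is $f_i = 0$.

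For existence, I invoke the classification of representations of the type $A_\infty$ quiver: every finitely supported graded vector space with a degree-one operator decomposes, uniquely up to isomorphism, as a direct sum of interval indecomposables $V^{[a, b]}$, which are one-dimensional in degrees $a, a+1, \dots, b$ with $N$ equal to the identity on overlapping degrees. I would construct $(N_i)$ by choosing the interval decomposition realising the multiplicities $m_i$ whose Jordan blocks are as long as possible, equivalently satisfying $\operatorname{rank}(N^k|_{M_i}) = \min(m_i, m_{i+1}, \dots, m_{i+k})$ for every $i$ and $k$. Such a canonical ``greedy'' decomposition exists and is uniquely determined by the $m_i$'s, as the number of indecomposable summands $V^{[a,b]}$ of any given shape is recursively determined by the rank data.

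For uniqueness, the key step is to show that genericity forces the above rank equalities: if some $\operatorname{rank}(N^k|_{M_i})$ were strictly smaller than $\min(m_i, \dots, m_{i+k})$, then the quiver decomposition would contain an indecomposable $V^{[a, b]}$ strictly contained, in the interval sense, inside the relevant span, and I would use this nesting to construct an explicit nonzero $f$ contradicting genericity. Conversely, once all the rank equalities hold, the Jordan type of $(\rho, V, N)$ is determined, hence any two generic $N$'s are conjugate by an element of $\Aut_{W_K}(V)$, giving the uniqueness. The main obstacle will be the explicit construction of the witnessing morphisms $f$ in the non-generic direction, which requires careful tracking of how an internal ``short'' Jordan block sits inside a longer one; everything else is essentially formal from the quiver-representation viewpoint.
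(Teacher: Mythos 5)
Your overall strategy — pass to the multiplicity spaces $M_i = \Hom_{W_K}(\pi_i, V)$ for a single twist--orbit of irreducibles, view $(N_i)$ as a representation of a type $A_\infty$ quiver, and use the interval decomposition — is sound and genuinely different from the paper's. The paper instead argues by induction on the number of irreducible constituents: it orders the $\rho_i$ so that $\rho_1$ cannot lie in the image of $N$, splits off the $N$-cyclic subrepresentation $U = \bigoplus_k N^k \rho_1$, shows using genericity that $U \cong \rho_1 \otimes \Sp(m)$ with $m$ maximal, and recurses on the complement; no quiver combinatorics is needed. Your route is more structural and recovers the description of the generic orbit in the Vogan variety that the paper's remark alludes to, but it needs more care in the key combinatorial step.

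There is a genuine error in your uniqueness sketch. You say that failure of the rank identities produces an indecomposable $V^{[a,b]}$ \emph{nested} inside the relevant span and that this \emph{nesting} yields a nonzero $f : V \to V(1)$; later you describe the obstacle as understanding ``how an internal short Jordan block sits inside a longer one.'' This is exactly backwards: nested interval modules give \emph{no} morphism to the Tate twist. A direct computation shows $\Hom(V^{[a,b]}, V^{[c,d]}) \neq 0$ iff $c \leq a \leq d \leq b$, and since $V^{[c,d]}(1) = V^{[c+1,d+1]}$, the condition for a nonzero morphism $V^{[a,b]} \to V^{[c,d]}(1)$ is precisely that $[a,b]$ and $[c,d]$ are \emph{linked} in the sense of Zelevinsky segments: neither contains the other and their union is still an interval. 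For instance $V^{[0,2]} \oplus V^{[1,1]}$ (a short block nested in a long one, with $m_0 = 1, m_1 = 2, m_2 = 1$) satisfies all the rank identities and is generic, while the other decomposition $V^{[0,1]} \oplus V^{[1,2]}$ with the same dimension vector has a crossing pair and is not generic. So the correct combinatorial criterion you need is ``no two intervals in the decomposition are linked,'' not ``no short interval is nested in the span.'' With the wrong notion of obstruction, the witnessing morphism $f$ you describe does not exist, and the argument as written fails.

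Beyond the sign error, two pieces of your sketch are not yet proofs. (1) To show genericity forces the rank identity $\operatorname{rank}(N^k|_{M_i}) = \min(m_i, \dots, m_{i+k})$, it is not enough to exhibit one interval $[a,b]$ strictly shorter than $[i,i+k]$; you must produce a \emph{pair} of linked intervals. This can be done (say $[a,b]$ passes through the minimizer $k^*$ but stops at $b < i+k$; then either some interval in the decomposition starts exactly at $b+1$, giving an adjacent linked pair with $[a,b]$, or every interval through $b+1$ must already contain $[a,b]$, forcing $m_{b+1} < m_{k^*}$, a contradiction), but the step is not formal. (2) Conversely, for existence you assert the max-rank (``greedy'') decomposition exists and is determined by the $m_i$, which is fine, but you never verify that it contains no linked pairs, i.e.\ that your constructed $N$ is actually generic. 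Both directions of the equivalence ``max rank $\Leftrightarrow$ no linked pairs $\Leftrightarrow$ generic'' need to be argued, and the nesting heuristic will not supply them.
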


	\begin{proof}
		Decompose $(\rho, V) = \bigoplus_{i = 1}^t (\rho_{i}, V_i)$ into irreducible summands and assume that they are ordered such that for every $i \in \{1, \dots, t\}$, we have $\rho_{i} \not \cong \rho_{j}(k)$ for $j > i$ and $k \geq 1$. We will prove the claim by induction on $t$. If $t = 0$, we can take $N = 0$. Suppose $t > 0$. We treat uniqueness first.
		
		Our ordering implies that $\rho_1$ cannot lie in the image of $N$. Thus,
		there is a direct sum decomposition of Weil--Deligne representations $V = U \oplus S$, where $U = \bigoplus_{k \geq 0} N^{k} \rho_1$. 
		Since $(\rho, V, N)$ is generic, we have
		$\Hom(S, U(1)) = 0$. This is implies that $\rho_1(a) \otimes \Sp(m')$ for $1 \leq a \leq m$ and $m' > m - a$ is not a direct summand of $S$. In particular, $\rho_1(m)$ is not a $W_K$-direct summand of $S$ and $m$ is the maximal integer such that $\rho_1(k) \in \{\rho_i\}$ for $k = 0, \dots, m - 1$. Hence, the isomorphism class of $U \cong \rho_1 \otimes \Sp(m)$ is independent of $N$. By the inductive hypothesis, the isomorphism class of $S$ is independent of $N$, too. We conclude that $(\rho, V, N)$ is uniquely determined.
		
		For the existence we let $m$ be maximal such that $\rho_1(k) \in \{\rho_i\}$ for all $k = 0, \dots, m - 1$. There are indices $i_0, i_1, \dots, i_{m - 1} \in \{1, \dots, t\}$ such that
		$V_{i_j} \cong V_{i_{j-1}}(1)$ and $i_0 = 1$. We define a nilpotent operator 
		\[
			N : \bigoplus_{j = 0}^{m-1} V_{i_j} \to \bigoplus_{j = 0}^{m-1} V_{i_j}(-1)
		\] 
		such that the restrictions induce isomorphisms $N : V_{i_{j - 1}} \to V_{i_{j}}(-1)$ for $j > 0$. This operator defines a Weil--Deligne representation on $U := \bigoplus_{j = 0}^{m-1} V_{i_j}$ which is isomorphic to the generic representation $\rho_1 \otimes \Sp(m)$. 
		Using the inductive hypothesis on $S := \bigoplus_{i \neq i_j} V_i$, we can extend $N$ to all of $V$. We have $\Hom(U, S(1)) = 0$ since $U$ is generated by $\rho_1$ and $\rho_1(1) \not \cong \rho_i$ for all $i > 1$.
		We have $\Hom(S, U(1)) = 0$ since otherwise there would be an index $i > 1$ such that $\rho_i \cong \rho_1(m)$, contradicting the maximality of $m$.
		Hence $(\rho, V, N)$ is generic.
	\end{proof}

	\begin{remark}
		The previous lemma has the following geometric interpretation: There is an affine variety $X/\complex$ (the Vogan variety introduced in \cite[\S 4]{vogan93}) which parametrises all the possible nilpotent matrices $N$ such that $(\rho, V, N)$ is a Weil--Deligne representation. The centraliser $G$ of $(\rho, V)$ acts on $X$ by conjugation. The space of (framed) deformations of $(\rho, V, N)$ has minimal dimension if and only if $(\rho, V, N)$ lies in an open $G$-orbit. 
		
		A Galois cohomology argument shows that the (framed) deformation space of $(\rho, V, N)$ has dimension 
		\[
			(\dim V)^2 + \dim \Hom( (\rho, V, N), (\rho(1), V, N)),
		\] 
		which is minimal if and only if $(\rho, V, N)$ is generic. Thus the lemma could be restated as saying that $X$ has a unique open $G$-orbit, which follows from
		\cite[Prop 4.5 d)]{vogan93}.
	\end{remark}

	\begin{num} \label{WD_rep_num}
		Let $K/\rationals_p$ be a finite extension and $\ell$ a prime number. Let $\Frob_K \in G_K$ be a Frobenius element and $t_\ell : I_K \to \ints_\ell$ a choice of tame character, where $I_K < G_K$ is the inertia group. Given an isomorphism $\iota : \overline{\rationals}_\ell \cong \complex$ and a continuous Galois representation $\rho : G_{K} \to \GL_n(\overline{\rationals}_\ell)$ there exists a unique Weil--Deligne representation
		$WD_{\iota}(\rho) = (r, \complex^n, N)$ such that for all $a \in \ints$ and $x \in I_K$, we have
		$\rho^{\iota}(\Frob_K^a x) = r(\Frob_K^a x) \exp(t_\ell(x) N)$. Moreover, the isomorphism class of $WD_{\iota}(\rho)$ does not depend on $\Frob_K$ and $t_\ell$. \cite[4.2.1]{tate_background}
	\end{num}

	\begin{lemma}  \label{purity_lemma}
		Let $(\alpha_1, \dots, \alpha_n ; \beta_1, \dots, \beta_n)$ be a hypergeometric parameter modulo $d$. Let $F$ be a CM field containing the $d$th roots of unity and $t \in F \setminus \mu_d$. Let $\lambda < \mathcal{O}_d$ be a prime of residue characteristic $\ell$ and $v \nmid \ell$ a finite place of $F$. For any isomorphism $\iota : \overline{\rationals}_\ell \cong \complex$, the Weil--Deligne representation
		\[
			WD_{\iota}(M(\alpha_1, \dots, \alpha_n ; \beta_1, \dots, \beta_n)_{\lambda, t}\rvert_{G_{F_v}} \otimes_{\mathcal{O}_{d,\lambda}} \overline{\rationals}_\ell )
		\]
		is pure.
	\end{lemma}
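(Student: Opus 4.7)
The strategy is to deduce purity of $M_{\lambda,t}$ from purity of the ambient middle cohomology of $\mathcal{Y}_t$, using the fact that $M_{\lambda,t}$ is cut out by Galois-equivariant projectors.

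The first step is to invoke Scholze's theorem \cite{scholze_perfectoid}. Since $t \in F \setminus \mu_d$, the fiber $\mathcal{Y}_t$ is a smooth projective hypersurface in $\mathbb{P}^{d-1}_{F_v}$. Scholze's resolution of the weight-monodromy conjecture for smooth projective hypersurfaces then implies that
\[
WD_\iota\bigl(H^{d-2}(\mathcal{Y}_{t,\overline{F}_v}, \overline{\rationals}_\ell)\bigr)
\]
is pure of weight $d - 2$. When $\mathcal{Y}_t$ happens to have good reduction at $v$ this purity follows already from Deligne's Weil II, but in the general case Scholze's theorem is essential.

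Next I would observe that the cutting-out projectors defining $M_{\lambda,t}$ commute with the $G_{F_v}$-action. Indeed, the $H_0$-action on $\mathcal{Y}_t$ is defined over $F$, so the $\chi_{\underline{a}}$-isotypic projector is Galois-equivariant. When the primitive part must be taken (only in the case $d = n+1$ with $n$ odd), this is cut out algebraically via the Lefschetz hyperplane class and so is also Galois-equivariant. Consequently, $M_{\lambda,t} \otimes_{\mathcal{O}_{d,\lambda}} \overline{\rationals}_\ell$ appears as a Galois-equivariant, hence $N$-stable, direct summand of the full middle cohomology, and likewise as a direct summand of the associated Weil--Deligne representation by functoriality of $WD_\iota$.

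Finally I would verify that purity descends to direct summands of Weil--Deligne representations. If $V = V' \oplus V''$ as a Weil--Deligne representation, then by the uniqueness statement for the monodromy filtration one has $M_\bullet V = M_\bullet V' \oplus M_\bullet V''$, so the Frobenius eigenvalues on $\operatorname{gr}_i^M V'$ form a subset of those on $\operatorname{gr}_i^M V$. Applying this to our direct sum decomposition yields the desired purity of $WD_\iota(M_{\lambda,t} \otimes \overline{\rationals}_\ell)$. The only substantive obstacle is Scholze's theorem; the reduction to it via Galois-equivariant projectors and the passage to direct summands are purely formal.
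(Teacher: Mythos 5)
Your proposal is correct and takes essentially the same approach as the paper: the paper's proof is a one-liner (the representation is a direct summand of the middle cohomology of a smooth projective hypersurface, hence pure by Scholze's Theorem 1.14), and you simply make explicit the same chain of reasoning, spelling out Galois-equivariance of the $H_0$-isotypic and primitive projectors and the passage of purity to Weil--Deligne direct summands via uniqueness of the monodromy filtration.
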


	\begin{proof}
		The representation $M(\alpha_1, \dots, \alpha_n ; \beta_1, \dots, \beta_n)_{\lambda, t}\rvert_{G_{F_v}}$ is a direct summand of the cohomology of a smooth projective hypersurface. Thus, it is pure by \cite[Thm 1.14]{scholze_perfectoid}.  
	\end{proof}

	\begin{num}
		For a smooth irreducible representation $\pi$ of $\GL_n(K)$, we denote by $\operatorname{rec}_K(\pi)$ the Weil--Deligne representation attached to $\pi$ via the local Langlands correspondence of Harris--Taylor \cite{harris_taylor}. 
	\end{num}

	\begin{theorem}[Varma] \label{varma_thm}
		Let $F$ be a CM or totally real field and $\ell$ a prime number. Let $\pi$ be a cuspidal cohomological automorphic representation of $\GL_n(\mathbb{A}_F)$. For a place $v \nmid \ell$ of $F$ we have 
		\[
				WD_{\iota}( r_{\iota}(\pi)\rvert_{G_{F_v}} )^{ss} \cong \operatorname{rec}_{F_v}(\pi_v |\det|^{(1 - n)/2})
		\]
		and
		\[
			WD_{\iota}( r_{\iota}(\pi)\rvert_{G_{F_v}} )^{F-ss} \cong \operatorname{rec}_{F_v}(\pi_v |\det|^{(1 - n)/2})
		\]
		holds if and only if $WD_{\iota}(r_{\iota}(\pi)\rvert_{G_{F_v}})^{F-ss}$ is generic.
	\end{theorem}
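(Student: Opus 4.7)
The plan is to deduce both assertions from the already-established local-global compatibility up to semisimplification, together with the uniqueness statement in Lemma \ref{generic_ineq_lemma} and the genericity of local components of cuspidal automorphic representations of $\GL_n$.

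First I would invoke the semisimple local-global compatibility for cuspidal cohomological automorphic representations of $\GL_n$ over a CM or totally real field, which yields an isomorphism of semisimplified Weil--Deligne representations
\[
WD_{\iota}(r_{\iota}(\pi)\rvert_{G_{F_v}})^{ss} \cong \operatorname{rec}_{F_v}(\pi_v|\det|^{(1-n)/2})^{ss}.
\]
This is the main content imported from prior work (Shin, Chenevier--Harris, Caraiani, Harris--Lan--Taylor--Thorne, Varma), and it is precisely the first displayed isomorphism in the theorem statement. In particular, the underlying semisimple Weil representation of $WD_{\iota}(r_{\iota}(\pi)\rvert_{G_{F_v}})^{F-ss}$ agrees with that of $\operatorname{rec}_{F_v}(\pi_v|\det|^{(1-n)/2})$.

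Next I would recall that $\pi_v$ is generic, being a local component of a cuspidal automorphic representation of $\GL_n(\mathbb{A}_F)$; consequently $\operatorname{rec}_{F_v}(\pi_v|\det|^{(1-n)/2})$ is a generic Weil--Deligne representation in the sense of Definition \ref{generic_def}. This gives one direction of the biconditional for free: if
\[
WD_{\iota}(r_{\iota}(\pi)\rvert_{G_{F_v}})^{F-ss} \cong \operatorname{rec}_{F_v}(\pi_v|\det|^{(1-n)/2}),
\]
then the left-hand side inherits genericity from the right. For the converse direction, assume $WD_{\iota}(r_{\iota}(\pi)\rvert_{G_{F_v}})^{F-ss}$ is generic. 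Then both it and $\operatorname{rec}_{F_v}(\pi_v|\det|^{(1-n)/2})$ are generic Weil--Deligne representations whose underlying semisimple Weil representations coincide by the preceding step. Applying the uniqueness portion of Lemma \ref{generic_ineq_lemma} to this common semisimple Weil representation, the two must be isomorphic, yielding the desired isomorphism.

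The main obstacle is really the input: the semisimple local-global compatibility statement rests on a long chain of prior work and constitutes the bulk of Varma's paper. Once it is imported, the equivalence between full local-global compatibility and the genericity of $WD_{\iota}(r_{\iota}(\pi)\rvert_{G_{F_v}})^{F-ss}$ is a formal consequence of the uniqueness of the generic nilpotent operator compatible with a prescribed semisimple Weil action, which is the purely linear-algebraic Lemma \ref{generic_ineq_lemma}.
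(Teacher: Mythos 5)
Your argument follows the paper's proof exactly: import semisimple local-global compatibility from Varma's theorem, observe that the cuspidality of $\pi$ forces $\pi_v$ to be generic and hence $\operatorname{rec}_{F_v}(\pi_v|\det|^{(1-n)/2})$ is a generic Weil--Deligne representation (the paper cites \cite[Lemma 1.1.3]{allen_polarizable} for this last implication), and then close the biconditional via the uniqueness half of Lemma \ref{generic_ineq_lemma}. You spell out both directions of the ``if and only if'' more explicitly than the paper does, but the structure and the key inputs are identical.
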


	\begin{proof}
		The first part follows from the main theorem of \cite{varma_local_global}.
		Since $\pi$ is a cuspidal representation of $\GL_n(\mathbb{A}_F)$, it is generic, and its local factors $\pi_w$ are generic for all finite places $w$.
		By \cite[Lemma 1.1.3]{allen_polarizable}, the Weil--Deligne representation $\operatorname{rec}_{F_v}(\pi_v |\det |^{(1-n)/2})$ is generic. The second part of the claim follows from lemma \ref{generic_ineq_lemma}.
	\end{proof}
	
	\subsection{The Potential Automorphy Theorem}
	Now we have collected all the necessary prerequisites to prove our potential automorphy theorem.
	
	\begin{theorem} \label{potential_automorphy_thm}
		Let $(\alpha_1, \dots, \alpha_n ; \beta_1, \dots, \beta_n)$ be a hypergeometric parameter of dimension $n$ modulo $d$ appearing in table \ref{special_hypergeos}.
		There exists an integer $C > 0$ with the following property. 
		Given the following objects:
		\begin{itemize}
			\item a CM field $F$ containing $\mu_d$,
			\item a prime $p$,
			\item a prime $\ell \nmid p C$ such that $\ell \equiv 1 \pmod{d}$, 
			\item a continuous semisimple representation $\overline{\rho} : G_{F} \to \GL_n(\mathbb{F}_\ell)$ such that $\det \overline{\rho} = \overline{\chi}_{cyc}^{-n(n-1)/2}$,
			\item a finite extension $F^{avoid}/F$,
		\end{itemize}
		there exists a finite totally real extension $F_2/\rationals$, linearly disjoint from $F^{avoid}$ over $\rationals$ such that for $F' = F F_2$, there exists an isomorphism $\iota : \overline{\rationals}_\ell \cong \complex$ and a cuspidal automorphic representation $\Pi$ of $\GL_n(\mathbb{A}_{F'})$ which is $\iota$-ordinary of weight $0$ at all places above $\ell$ 
		such that 
		\[
			\overline{r_{\iota}(\Pi)} = \overline{\rho}
		\]
		and for all places $v \mid p$ of $F'$
		\[
			WD(r_{\iota}(\Pi) \rvert_{G_{F_v'}})^{F-ss}
		\]
		is generic with monodromy operator having Jordan blocks $J_\alpha$ of size $\#\{i : \alpha_i = \alpha\}$ for $\alpha \in \ints/d\ints$. If $d$ is odd, then there also exists another cuspidal automorphic representation $\Pi_2$ of $\GL_n(\mathbb{A}_{F'})$ with the same properties as $\Pi$ except that the $v$-adic monodromy operator is nilpotent of rank $1$.
	\end{theorem}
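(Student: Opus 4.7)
The plan is to follow the strategy of \cite{HSBT} closely, using the family $M=M(\alpha_i;\beta_j)$ as an $\ell$-$\ell'$ bridge between $\overline{\rho}$ and a residually automorphic induced representation, while replacing the maximally unipotent degeneration at $t=\infty$ used there by the more general unipotent degeneration provided by (UM). The one genuinely new ingredient, compared to \cite{qian_potential}, is Scholze's purity for hypersurfaces (Lemma \ref{purity_lemma}); combined with Lemma \ref{pure_implies_generic_lemma} it forces the Weil--Deligne representation at $v\mid p$ to be generic automatically.

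I would first fix an auxiliary prime $\ell'$ with $\ell'\equiv 1\pmod d$ and $\ell'\nmid pC\ell$, together with primes $\lambda\mid\ell$ and $\lambda'\mid\ell'$ of $\mathcal{O}_d$ for which (BM) holds modulo $\lambda\lambda'$. Let $\psi$ be the character from (D) and set $\overline{\rho}':=\overline{\rho}\otimes\overline{\psi}$. I would then run a Moret--Bailly / Hilbert irreducibility argument as in \cite[\S1]{HSBT}: for an auxiliary totally real field $F_1$ and a suitable CM Hecke character $\theta$, consider the moduli problem parametrising $t\in\Spec A_d$ together with isomorphisms
\[
\overline{M_{\lambda,t}}\xrightarrow{\sim}\overline{\rho}'|_{G_{F'}},\qquad \overline{M_{\lambda',t}}\xrightarrow{\sim}\Ind_{G_{F_1(\mu_{\ell'})}}^{G_{F_1}}\overline{\theta}|_{G_{F'}}.
\]
The mod $\lambda\lambda'$ big-image part of (BM) makes this moduli scheme geometrically irreducible over a suitable finite extension, so Moret--Bailly produces a totally real $F_2/\rationals$ linearly disjoint from $F^{avoid}$ and a $t\in F'\setminus\mu_d$ (with $F':=FF_2$) simultaneously realising the isomorphisms and the following local constraints: at $v\mid p$, $|1/t|_v$ is small, so that via specialisation the inertial action on $M_{\lambda,t}|_{G_{F'_v}}$ is conjugate to the topological monodromy at $\infty$, which by (UM) has Jordan blocks $J_\alpha$ of size $\#\{i:\alpha_i=\alpha\}$; at $v\mid\ell$, $|t|_v<1$, triggering ordinarity of weight $0$ via Lemma \ref{ordinary_lemma} (whose hypothesis is supplied by (R)); at $v\mid\ell'$ an analogous condition; and if $d$ is even, $1-t^d\in(F')^{\times 2}$ so that Proposition \ref{determinant_nth_power_prop} globally pins down the determinant.

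Since $\overline{M_{\lambda',t}}|_{G_{F'}}$ is then induced from a Hecke character, it is residually automorphic, and the automorphy lifting theorems of \cite[\S6]{10author}, applied at $\lambda'$ with the enormous-image and decomposed-generic hypotheses supplied by (BM) and by $\ell'\equiv 1\pmod d$, promote this to automorphy of $M_{\lambda',t}$ itself. Compatibility in the resulting strictly compatible system yields a cuspidal $\Pi$ over $F'$ with $r_\iota(\Pi)\cong M_{\lambda,t}\otimes\psi^{-1}$; by construction $\overline{r_\iota(\Pi)}=\overline{\rho}$ and $\Pi$ is $\iota$-ordinary of weight $0$ at places above $\ell$. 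At $v\mid p$ the Weil--Deligne representation $WD_\iota(r_\iota(\Pi)|_{G_{F'_v}})$ has nilpotent operator with the Jordan blocks of (UM) by the $p$-adic placement of $t$, and is pure by Lemma \ref{purity_lemma}; Lemma \ref{pure_implies_generic_lemma} then gives the required genericity. When $d$ is odd, repeating the argument with $t$ chosen $v$-adically close to $1$ instead of $\infty$ at $v\mid p$ produces $\Pi_2$, using the rank-one unipotent monodromy at $t=1$ from Proposition \ref{unipotent_monodromy_prop}. The main obstacle will be the Moret--Bailly step: verifying geometric irreducibility of the relevant moduli scheme with the extra local condition at places above $p$ built in, which rests on the mod $\lambda\lambda'$ big monodromy of Proposition \ref{big_monodromy_finite_prop} together with the combinatorial hypotheses on $(\alpha_i;\beta_j)$ codified in table \ref{special_hypergeos}.
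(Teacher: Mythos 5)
Your high-level strategy mirrors the paper's: use the Dwork family $M=M(\alpha_i;\beta_j)$ as an $\ell$--$\ell'$ bridge, run Moret--Bailly with big monodromy to find a good specialisation $t$, place $t$ appropriately at $v\mid p$ to realise (UM) and at $v\mid\ell,\ell'$ to force ordinarity, and finally invoke Scholze's purity plus Lemma \ref{pure_implies_generic_lemma} to get genericity. That part is faithful to the paper, and you correctly identify purity-hence-genericity as the new ingredient over \cite{qian_potential}.

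However, your choice of the automorphic ``anchor'' on the $\ell'$ side is a genuine gap. You propose taking the target residual representation at $\lambda'$ to be an induced Hecke character $\Ind_{G_{F_1(\mu_{\ell'})}}^{G_{F_1}}\overline{\theta}$. Such a representation has dihedral-type image (roughly a cyclic group extended by an abelian quotient), which is \emph{not} enormous in the sense of \cite[6.2.28]{10author}. The automorphy lifting theorem \cite[Theorem 6.1.2]{10author} that you then invoke to lift automorphy across $\lambda'$ requires enormous image of the residual representation being lifted from; it cannot be applied to a dihedral residual representation. Your parenthetical claim that ``the enormous-image and decomposed-generic hypotheses [are] supplied by (BM)'' conflates the \emph{geometric} monodromy of the family (which is big by (BM)) with the \emph{arithmetic} Galois image of the chosen specialisation $M_{\lambda',t}/\lambda'$: once you have forced $\overline{M_{\lambda',t}}\cong\Ind\overline{\theta}$ via Moret--Bailly, the arithmetic image at that point is precisely the small dihedral one, regardless of what the generic monodromy is. The paper avoids this by using $\Sym^{n-1}\overline{r}_{E,\ell'}$ for a non-CM elliptic curve $E/\rationals$ as the anchor (following \cite{qian_potential}): its image is $\Sym^{n-1}(\GL_2(\mathbb{F}_{\ell'}))$, which is enormous (\cite[Lemma 7.1.6 (2)]{10author}) and decomposed generic (\cite[Lemma 4.3(4)]{qian_potential}); potential automorphy of $\Sym^{n-1}r_{E,\ell'}$ over a totally real extension is supplied by \cite[Corollary 7.2.4]{10author}, which replaces the automorphic induction step in your sketch. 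You will need to switch to this (or some other non-dihedral anchor with enormous residual image) for the lifting step to go through.

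Two smaller remarks. First, you should also take care with linear disjointness: the paper carefully sets up $F^{av}$, $F^{suff}$, $F_2^{av}$, and $F_1$ so that the final $F_2$ is linearly disjoint from $F^{avoid}$ over $\rationals$ and so that the automorphy of $\Sym^{n-1}r_{E,\ell'}$ persists over $(F')^+$; this bookkeeping, while routine, is nontrivial and is where the integer $C$ and the auxiliary constraints on $\ell'$ enter. Second, your worry that geometric irreducibility of the Moret--Bailly scheme is ``the main obstacle'' is misplaced: geometric connectedness follows directly from the mod $\lambda\lambda'$ part of (BM) (via Goursat's lemma, since $\PSL_n(\mathcal{O}_d/\lambda)\not\cong\PSL_n(\mathcal{O}_d/\lambda')$) and does not depend on the target residual representation; the real obstacle is the ALT applicability just described.
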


	\begin{proof}
		Pick a prime $\lambda < \mathcal{O}_d$ above $\ell$ and choose an isomorphism $\iota : \overline{\rationals}_\ell \cong \complex$.
		Let $M = M(\alpha_1, \dots, \alpha_n ; \beta_1, \dots, \beta_n)$.
		Property (D) of proposition \ref{computed_motives_prop} shows that there is an isomorphism
		\[
			\alpha_\lambda : \det( M_{\lambda, 0} \otimes \psi_{\lambda} ) \cong \chi_{cyc}^{-n(n-1) / 2}
		\] 
		for some character $\psi_{\lambda} : G_{\rationals(\mu_d)} \to \mathcal{O}_{d,\lambda}^\times$.
		Let $E$ be a non-CM elliptic curve over $\rationals$.
		Denote by $r_{E,\ell'} : G_{\rationals} \to \GL_2(\rationals_{\ell'})$ the Galois representation acting on the dual of the $\ell'$-adic Tate module of $E$.
		As in the first part of the proof of \cite[Proposition 4.1]{qian_potential}, there exists $\ell' \equiv 1 \pmod{d}$ a large enough prime such that
		\begin{itemize}
			\item $\ell'$ is unramified in $F$,
			\item $\overline{r}_{E,\ell'}(G_{\tilde{F}}) = \GL_2(\mathbb{F}_{\ell'})$, where $\tilde{F}$ denotes the normal closure of $F/\rationals$,
			\item $\exists \sigma \in G_{F} \setminus G_{F(\mu_{\ell'})}$ such that $\overline{r}_{E,\ell'}(\sigma)$ is a scalar,
			\item $E$ has good ordinary reduction at $\ell'$.
		\end{itemize}
		Let $F^{av}$ be the normal closure of $F^{avoid} \overline{\rationals}^{\ker \overline{r}_{E,\ell'}}$ over $\rationals$.
		By \cite[Corollary 7.2.4]{10author}, there exists a finite totally real Galois extension
		$F^{suff} / \rationals$ and a finite Galois extension $F^{av}_2 / \rationals$
		such that 
		\begin{itemize}
			\item $F_{2}^{av}/\rationals$ is
			linearly disjoint from $F^{av}/\rationals$.
			\item $F^{suff}/\rationals$ is linearly disjoint from $F^{av}F_2^{av}/\rationals$.
			\item For any totally real extension $F'/F^{suff}$ which is linearly disjoint from $F_2^{av}$ over $\rationals$,
			the Galois representation $\Sym^{n-1} r_{E,\ell'} \rvert_{G_{F'}}$ is automorphic.
		\end{itemize}		
		Let $\lambda' < \mathcal{O}_d$ be a prime above $\ell'$ and $K/F$ a number field. Let $B$ be the etale $\mathcal{O}_d/\lambda \lambda'$-local system on $\Spec A_d \otimes_{\mathcal{O}_d} K$ corresponding to $M_\lambda/\lambda(\psi_\lambda) \times M_{\lambda'}/\lambda'(\psi_{\lambda'})$. Let $\widetilde{A}_d$ be defined as in lemma \ref{constant_det_lemma}. Consider the following moduli problem.
		\[
			S/ \Spec \widetilde{A}_d \otimes_{\mathcal{O}_d} K \mapsto \left\{ 
				\eta : B \rvert_{S} \cong \overline{\rho} \times \Sym^{n-1} \overline{r}_{E, \ell'} \mid \det(\eta) = \alpha_\lambda  \times \alpha_{\lambda'}
			\right\}
		\]
		It is represented by a finite etale scheme 
		\[
			X \to \Spec \widetilde{A}_d  \otimes_{\mathcal{O}_d} K.
		\] 
		If $K \to \complex$ is a field homomorphism, then 
		$X(\complex)$ is the covering space of $\Spec \widetilde{A}_d(\complex)$ corresponding to the $\pi_1(\complex \setminus \mu_d, t)$-set 
		\[
			\SL(M(s \alpha_i ; s \beta_j)_{\lambda,t}/\lambda) \times \SL(M(s \alpha_i ; s \beta_j)_{\lambda',t}/ \lambda')
		\]
		for some $s \in (\ints/d\ints)^\times$. 
		It follows from property (BM) of proposition \ref{computed_motives_prop} that
		$\pi_1(\complex \setminus \mu_d, t)$ acts transitively on this set, hence
		$X(\complex)$ is connected. Since $K \to \complex$ was arbitrary, $X$ is geometrically connected.
	
		Let $K = F F^{suff}$ and let $q$ be the composition 
		\[
		X \to \Spec \widetilde{A}_d \otimes_{\mathcal{O}_d} K \to \Spec A_d \otimes_{\mathcal{O}_d} K.
		\]
		Let $T = \Res_{K/\rationals} X$, $S_1 = \{\infty\}$, $S_2 = \emptyset$ and $S_3 = \{\ell, \ell', p\}$. We apply the theorem of Moret-Bailly \cite[Proposition 2.1]{HSBT} to $T/ \rationals$, $S_1$, $S_2$, $S_3$, $L = F^{suff} F^{av} F_2^{av}$ and
		\begin{itemize}
			\item if $u \in S_1$, then $\Omega_u = X(\reals \otimes_{\rationals} K) = X(\prod_{v \mid \infty} \complex)$,
			\item if $u \in \{\ell, \ell'\}$, then 
			\[
			\Omega_u = q^{-1} \{(x_{\tau})_{\tau : K \hookrightarrow \overline{\rationals}_u} \in \Spec A_d(\prod_{K \hookrightarrow \overline{\rationals}_u} \overline{\rationals}_u) : \forall\, \tau,\, |x_{\tau}|_{u} < 1 \}
			\]
			\item if $u = p$, then 
			\[
			\Omega_u = q^{-1} \{(x_{\tau})_{\tau : K \hookrightarrow \overline{\rationals}_u} \in \Spec A_d(\prod_{K \hookrightarrow \overline{\rationals}_u} \overline{\rationals}_u) : \forall\, \tau,\, |x_{\tau}|_{u} > 1 \}.
			\]
		\end{itemize}
		
		Thus, there exists a finite totally real Galois extension $F_1/\rationals$ such that for $F' := K F_1 = F F_1 F^{suff}$, there exists $t \in \Spec \widetilde{A}_d(F')$ such that
		\begin{itemize}
			\item $F_1$ is linearly disjoint from $F^{suff} F^{av} F_2^{av}$ over $\rationals$,
			\item $M_{t, \lambda}/\lambda \otimes \psi_{\lambda} \cong \overline{\rho} \rvert_{G_{F'}}$ and  
			$M_{t, \lambda'}/\lambda' \otimes \psi_{\lambda'} \cong \Sym^{n-1} \overline{r}_{E,\ell'} \rvert_{G_{F'}}$,
			\item $|t|_v > 1$ for all $v \mid p$,
			\item $M_{t,\lambda} \otimes \psi_{\lambda}$ and $M_{t, \lambda'} \otimes \psi_{\lambda'}$ are ordinary of weight 0 by lemma \ref{ordinary_lemma}.
		\end{itemize} 
		Note that this also implies the following inclusions
		\[
		F' \cap F_2^{av} \subset K \cap F_2^{av} \subset F^{suff} F^{av} \cap F_2^{av} = \rationals
		\]
		and
		\[
			F_1 F^{suff} \cap F^{av} = \rationals.
		\] 
		Hence, $F'$ is linearly disjoint from $F_2^{av}$ over $\rationals$ and $F_2 := F_1 F^{suff}$ is linearly disjoint from $F^{av}$ over $F$. In particular $F_2$ is linearly disjoint from $F^{avoid} \subset F^{av}$ as claimed in the theorem.
		
		Since $(F')^+$ contains $F^{suff}$, and is linearly disjoint from $F_2^{av}$, we know that $\Sym^{n-1} r_{E,\ell'} \rvert_{G_{(F')^+}}$ is automorphic. By quadratic base change, the same holds for $\Sym^{n-1} r_{E,\ell'} \rvert_{G_{F'}}$.
		Since $\overline{\rationals}^{\ker \overline{r}_{E,\ell'}} \subset F^{av}$, we 
		have 
		\begin{itemize}
			\item $\overline{r}_{E,\ell'}(G_{F'}) = \GL_2(\mathbb{F}_{\ell'})$,
			\item $\exists \sigma \in G_{F'} \setminus G_{F'(\mu_{\ell'})}$ such that $\overline{r}_{E,\ell'}(\sigma)$ is a scalar.
		\end{itemize}
		By \cite[Lemma 7.1.6 (2)]{10author}, we know that $\Sym^{n - 1} \overline{r}_{E,\ell'}(G_{F'(\mu_{\ell'})})$ is enormous and
		 \cite[Lemma 4.3(4)]{qian_potential} implies that $\Sym^{n - 1} \overline{r}_{E,\ell'} \rvert_{G_{F'}}$ is decomposed generic.
		
		The automorphy lifting theorem \cite[Theorem 6.1.2]{10author} implies that $M_{\lambda', t} \otimes \psi_{\lambda'}$ is automorphic.
		This yields a cuspidal cohomological automorphic representation $\Pi$ of $\GL_n(\mathbb{A}_{F'})$ such that $r_{\iota'}(\Pi) = M_{\lambda', t} \otimes \psi_{\lambda'}$ for some isomorphism $\iota' : \overline{\rationals}_{\ell'} \cong \complex$ such that $\iota' \circ \iota^{-1}$ fixes $\rationals(\mu_d)$. Since $M_{\lambda',t} \otimes \psi_{\lambda'}$ and $M_{\lambda,t} \otimes \psi_{\lambda}$ have the same Frobenius eigenvalues, this also implies that
		$r_{\iota}(\Pi) = M_{\lambda, t} \otimes \psi_{\lambda}$.
		Moreover, since $|t|_v > 1$ we can prove as in \cite[Lemma 1.15]{HSBT}, that the $v$-adic monodromy operator acting on $M_{\lambda,t}$ coincides with the monodromy around $\infty$ acting on $M_{B,t}$. This has the correct Jordan blocks by property (UM) of proposition \ref{computed_motives_prop}.
		By lemma \ref{purity_lemma}, we know that $WD(M_{\lambda,t} \rvert_{G_{F_w}})^{F-ss}$ is pure and hence generic by lemma \ref{pure_implies_generic_lemma}.
		
		If $d$ is odd, then we simply repeat the same proof but for $u = p$ replace $\Omega_u$ by
		\[
			\Omega_u = q^{-1} \{(x_{\tau})_{\tau : K \hookrightarrow \overline{\rationals}_u} \in \Spec A_d(\prod_{K \hookrightarrow \overline{\rationals}_u} \overline{\rationals}_u) : \forall\, \tau,\, |x_{\tau} - 1|_{u} < 1 \},
		\]
		to obtain $\Pi_2$.
	\end{proof}

\begin{corollary} \label{potential_automorphy_cor}
	Let $n \in \{3,4,5\}$ and $d_3 = 9$, $d_4 = 18$, $d_5 = 168 = \operatorname{lcm}(21,24)$.
	There exists an integer $C > 0$ with the following property.
	Given the following objects:
	\begin{itemize}
		\item a nilpotent $n \times n$ matrix $N$,
		\item a CM field $F$ containing $\mu_{d_n}$,
		\item a prime $p$,
		\item a prime $\ell \neq p$ such that $\ell \nmid C$ and $\ell \equiv 1 \pmod{d_n}$,
		\item a continuous semisimple representation $\overline{\rho} : G_{F} \to \GL_n(\mathbb{F}_{\ell})$ such that $\det \overline{\rho} = \overline{\chi}_{cyc}^{-n(n-1)/2}$,
		\item a finite Galois extension $F^{avoid}/F$,
	\end{itemize}
	there exists a finite totally real extension $F_2/\rationals$, linearly disjoint from $F^{avoid}$ over $\rationals$. For $F' = F F_2$, there exists a cuspidal automorphic representation $\Pi$ of $\GL_n(\mathbb{A}_{F'})$ which is ordinary of weight $0$ at all places above $\ell$ 
	such that 
	\[
	\overline{r_{\iota}(\Pi)} = \overline{\rho} \rvert_{G_{F'}}
	\]
	and for all places $v \mid p$ of $F'$
	\[
	WD(r_{\iota}(\Pi) \rvert_{G_{F_v'}})^{F-ss}
	\]
	is generic with monodromy operator conjugate to $N$.
\end{corollary}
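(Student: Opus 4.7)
The plan is to reduce the corollary to finitely many applications of Theorem \ref{potential_automorphy_thm}, one per Jordan type of $N$. The conjugacy class of a nilpotent $n \times n$ matrix is determined by the (unordered) partition $\pi$ of $n$ giving its Jordan block sizes, and for each such $\pi$ with $n \in \{3,4,5\}$ Table \ref{special_hypergeos} provides a hypergeometric parameter $(\alpha^\pi_i ; \beta^\pi_j)$ modulo some integer $d^\pi$ whose $\alpha$-multiplicities realize $\pi$. By inspection, each $d^\pi$ divides $d_n$; this is the content of the identities $d_3 = 9$, $d_4 = 18$ and $d_5 = \operatorname{lcm}(21,24) = 168$ given in the statement. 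Consequently the hypotheses $\mu_{d_n} \subset F$ and $\ell \equiv 1 \pmod{d_n}$ automatically imply the corresponding hypotheses for every $d^\pi$, so a single pair $(F, \ell)$ satisfies the input of Theorem \ref{potential_automorphy_thm} for whichever of the finitely many parameters we wish to use.

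Next I would define $C$ as the product of the constants $C^\pi > 0$ supplied by Theorem \ref{potential_automorphy_thm} applied to each parameter $(\alpha^\pi_i ; \beta^\pi_j)$ for $\pi \vdash n$; there are only $p(n)$ such parameters (three, five and seven for $n = 3,4,5$ respectively). Given the data of the corollary, let $\pi$ denote the Jordan type of $N$. Since $\ell \nmid C$ we have $\ell \nmid C^\pi$, and all hypotheses of Theorem \ref{potential_automorphy_thm} are met. Invoking that theorem with the parameter $(\alpha^\pi_i ; \beta^\pi_j)$, the CM field $F$, the primes $p$ and $\ell$, the residual representation $\overline{\rho}$ and the extension $F^{avoid}/F$ (which is still just a finite extension, as required) produces a totally real extension $F_2/\rationals$ linearly disjoint from $F^{avoid}$ over $\rationals$, together with a cuspidal automorphic representation $\Pi$ of $\GL_n(\mathbb{A}_{F'})$ (with $F' = F F_2$) which is $\iota$-ordinary of weight $0$ at all places above $\ell$ and satisfies $\overline{r_\iota(\Pi)} = \overline{\rho}\rvert_{G_{F'}}$. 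At every place $v \mid p$, its Frobenius-semisimple Weil--Deligne representation is generic with monodromy operator having Jordan blocks whose sizes are the multiplicities of the $\alpha^\pi_i$; by the choice of $\pi$ these multiplicities form the partition $\pi$, so the monodromy operator is conjugate to $N$.

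There is essentially no genuine obstacle beyond the theorem itself: the corollary is a mechanical repackaging of Theorem \ref{potential_automorphy_thm}. The only substantive content is the assertion that every partition of $n$ for $n \in \{3,4,5\}$ actually appears in Table \ref{special_hypergeos}, which is the outcome of the computer search described in \S 4 and recorded in Proposition \ref{computed_motives_prop}. One minor bookkeeping point to verify is that the $\iota$-ordinarity supplied by the theorem implies ordinarity in the sense used in the corollary (which it does, since the underlying notion is the same up to the implicit choice of $\iota$), and that the existence of the extension $F_2$ linearly disjoint from the finite extension $F^{avoid}$ follows from the statement of the theorem, where the analogous disjointness is asserted directly.
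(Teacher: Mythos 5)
Your reduction works for most Jordan types but has a genuine gap: it rests on the claim that \emph{every} partition of $n$ for $n \in \{3,4,5\}$ appears as the multiplicity pattern of the $\alpha_i$ in some row of Table \ref{special_hypergeos}, and that is false. Inspecting the table, the partitions $(1,1,\dots,1)$ (i.e.\ $N=0$) and $(2,1,\dots,1)$ (rank-one nilpotent) never occur: for $n=3$ the only row is $(0,0,0)$, giving $(3)$; for $n=4$ the rows give $(4)$, $(2,2)$, $(3,1)$; for $n=5$ they give $(5)$, $(3,2)$, $(4,1)$, $(2,2,1)$, $(3,1,1)$. So the argument ``pick the row whose $\alpha$-multiplicities realize $\pi$ and invoke Theorem \ref{potential_automorphy_thm}'' simply has no input for these two partitions, and the corollary would be unproven exactly there.

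The paper handles these two cases separately, and both workarounds are substantive. For $N=0$ the paper invokes the main theorem of \cite{qian_potential} directly (that theorem produces unramified-at-$v$ automorphic representations, which is the $N=0$ case). For the rank-one $N$, the paper uses the extra conclusion at the end of Theorem \ref{potential_automorphy_thm}: when $d$ is odd there is an additional representation $\Pi_2$ whose $v$-adic monodromy is nilpotent of rank $1$; this exists because the monodromy operator at $t=1$ (rather than $t=\infty$) is a transvection when $d$ is odd, by Proposition \ref{unipotent_monodromy_prop}. Since every $n \in \{3,4,5\}$ has at least one odd-$d$ row in the table ($d=9$ for $n=3,4$, $d=21$ for $n=5$), this covers $(2,1,\dots,1)$. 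Your proof never invokes the $\Pi_2$ clause nor \cite{qian_potential}, so you need to add both branches. (Your observation that the relevant $d$'s divide $d_n$ is correct provided you restrict to the subset of rows that the paper isolates in Table \ref{cor_table} — note that $d=22$ appears in Table \ref{special_hypergeos} for $n=5$ but $22 \nmid 168$, so ``by inspection'' should be qualified to that subtable.)
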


\begin{proof}
	If $N = 0$, then this follows from the main theorem of \cite{qian_potential}.
	If $N \neq 0$, use proposition \ref{unipotent_monodromy_prop} to select the hypergeometric parameter corresponding to $N$ from table \ref{cor_table}, which is a subset of table \ref{special_hypergeos}. Then apply the theorem with this parameter.
\end{proof}

	 \begin{figure}[ht]
	 \begin{tabular}{ c | c | c | c | c }
	 	$n$ & $d$ & $\alpha_i$ & $\beta_j$ & $\underline{c}$ \\ \hline
	 	$3$ & $9$ & $0, 0, 0$ & $1, 2, 6$ & $3, 7, 8$ \\
	 	
	 	$4$ & $9$ & $0, 0, 0, 0$ & $1, 2, 7, 8$ & $0, 0, 0$  \\
	 	$4$ & $9$ & $0, 0, 1, 1$ & $2, 4, 6, 8$ & $0, 0, 0$ \\  
	 	$4$ & $18$ & $0, 0, 0, 3$ & $4, 11, 16, 17$ & $1, 7, 10$ \\
	 	
	 	$5$ & $21$ & $0, 0, 0, 0, 0$ & $1, 2, 4, 15, 20$ & $6, 17, 19$ \\
	 	$5$ & $21$ & $0, 0, 0, 0, 1$ & $4, 10, 12, 18, 20$ & $1, 1, 19$ \\
		$5$ & $24$ & $0, 0, 0, 1, 1$ & $2, 6, 12, 19, 23$ & $0, 0, 0$ \\
		$5$ & $24$ & $0, 0, 1, 1, 7$ & $8, 12, 13, 17, 19$ & $0, 0, 0$ \\
		$5$ & $24$ & $0, 0, 0, 2, 6$ & $7, 8, 12, 19, 22$ & $0,0,0$ 
	 \end{tabular}
 \caption{Hypergeometric parameters sufficient for Corollary \ref{potential_automorphy_cor}}
 \label{cor_table}
 	\end{figure}

\begin{corollary} \label{automorphic_existence_cor}
	Let $n \in \{3,4,5\}$ and $d_3 = 9$, $d_4 = 18$, $d_5 = 168$ and $p$ a prime. For any nilpotent $n \times n$ matrix $N$, there exists a CM field $F'$ and a cuspidal cohomological automorphic representation $\Pi$ of $\GL_n(\mathbb{A}_{F'})$ of weight $0$, which is not essentially conjugate self-dual, such that the monodromy operator of $\operatorname{rec}_{F_v'}(\Pi_v |\det|^{(1-n)/2})$ coincides with $N$ for all $v \mid p$.
\end{corollary}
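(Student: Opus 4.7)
The plan is to apply Corollary \ref{potential_automorphy_cor} to a residual representation $\overline{\rho}$ chosen so that neither $\overline{\rho}$ nor its restriction to the totally real extension supplied by the corollary is essentially conjugate self-dual, and then transfer the local monodromy information from $r_\iota(\Pi)$ to $\operatorname{rec}_{F_v'}(\Pi_v \lvert \det \rvert^{(1-n)/2})$ via Theorem \ref{varma_thm}.

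First I fix a CM field $F$ containing $\mu_{d_n}$ (for instance $F = \rationals(\mu_{d_n})$) and a prime $\ell \equiv 1 \pmod{d_n}$ with $\ell \nmid p C$, where $C$ is the constant of Corollary \ref{potential_automorphy_cor}. For the residual representation, take
\[
\overline{\rho} = \bigoplus_{i = 1}^n \overline{\psi}_i : G_F \to \GL_n(\overline{\mathbb{F}}_\ell),
\]
where the $\overline{\psi}_i$ are finite-order characters normalised so that $\prod_i \overline{\psi}_i = \overline{\chi}_{cyc}^{-n(n-1)/2}$. Such an $\overline{\rho}$ fails to be essentially conjugate self-dual precisely when, for every permutation $\sigma \in S_n$ and every character $\overline{\mu}$, the relations $\overline{\psi}_i \circ c = \overline{\psi}_{\sigma(i)}^{-1} \overline{\mu}$ do not hold simultaneously for all $i$; equivalently, one of the ratios $\overline{\psi}_{\sigma(i)} (\overline{\psi}_i \circ c) / \overline{\psi}_{\sigma(j)} (\overline{\psi}_j \circ c)$ with $i \neq j$ is non-trivial. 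This is easily arranged by picking the $\overline{\psi}_i$ generically.

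Next I take $F^{avoid}$ to be a finite Galois extension of $F$ containing the splitting field $\overline{\rationals}^{\ker \overline{\rho}}$, the field $F(\mu_\ell)$, and the fixed fields of the finitely many ratio characters described above. Then for any totally real $F_2/\rationals$ linearly disjoint from $F^{avoid}$ over $\rationals$, the restriction $\overline{\rho}\rvert_{G_{F'}}$ (with $F' = FF_2$) has the same image as $\overline{\rho}$, and the relevant ratios remain non-trivial on $G_{F'}$, so $\overline{\rho}\rvert_{G_{F'}}$ is still not essentially conjugate self-dual. Applying Corollary \ref{potential_automorphy_cor} with $\overline{\rho}$, the prescribed nilpotent matrix $N$, and this $F^{avoid}$ produces such an $F' = FF_2$ together with a cuspidal cohomological representation $\Pi$ of $\GL_n(\mathbb{A}_{F'})$ of weight $0$ satisfying $\overline{r_\iota(\Pi)} = \overline{\rho}\rvert_{G_{F'}}$, and such that $WD(r_\iota(\Pi)\rvert_{G_{F_v'}})^{F-ss}$ is generic with monodromy operator conjugate to $N$ for every $v \mid p$.

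Finally, if $\Pi$ were essentially conjugate self-dual then one would have $r_\iota(\Pi)^c \cong r_\iota(\Pi)^\vee \otimes \chi$ for some continuous character $\chi : G_{F'} \to \overline{\rationals}_\ell^\times$, and reducing modulo $\ell$ would yield $\overline{\rho}^c\rvert_{G_{F'}} \cong \overline{\rho}^\vee\rvert_{G_{F'}} \otimes \overline{\chi}$, contradicting the construction of $\overline{\rho}$ and $F^{avoid}$. Since $WD(r_\iota(\Pi)\rvert_{G_{F_v'}})^{F-ss}$ is generic, Theorem \ref{varma_thm} gives $\operatorname{rec}_{F_v'}(\Pi_v \lvert \det \rvert^{(1-n)/2}) \cong WD(r_\iota(\Pi)\rvert_{G_{F_v'}})^{F-ss}$ for each $v \mid p$, so its monodromy operator is also conjugate to $N$. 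The main obstacle is the design of $F^{avoid}$: one must encode in it every obstruction to conjugate self-duality of $\overline{\rho}$ so that these obstructions survive base change to the (otherwise uncontrolled) extension $F'$ produced by the potential automorphy theorem.
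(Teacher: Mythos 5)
Your argument is correct, but it takes a genuinely different route from the paper's. The paper constructs $\overline\rho$ from an \emph{irreducible} $3$-dimensional non-self-dual mod-$\ell$ representation of $G = \PSL_2(\mathbb{F}_7)$, realised as a Galois group over $\rationals$ via the splitting field of $x^7-154x+99$; for $n=4,5$ it pads this with trivial summands. The non-ECS-duality of the resulting $\Pi$ is then immediate: any relation $\overline\rho^\vee\cong\overline\rho\otimes\overline\chi$ would force $\overline\chi$ to factor through the simple group $G$, hence be trivial, contradicting non-self-duality. Because $G$ is simple non-abelian and $F^{avoid}$ is just $\overline{\rationals}^{\ker\overline\rho}$, the big image and all obstructions survive any base change linearly disjoint from $F^{avoid}$, so no extra care is needed. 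Your version instead uses a direct sum of characters, which is more elementary (no invocation of $\PSL_2(\mathbb{F}_7)$ or explicit polynomials) but requires you to carefully package \emph{all} obstructions to ECS-duality into $F^{avoid}$ by hand, as you correctly flag at the end. This works, but you should actually verify the ``easily arranged'' claim: you need $\overline\psi_i$ valued in $\mathbb{F}_\ell^\times$ (the corollary requires $\overline\rho$ valued in $\GL_n(\mathbb{F}_\ell)$, not $\GL_n(\overline{\mathbb{F}}_\ell)$) with $\prod_i\overline\psi_i=\overline\chi_{cyc}^{-n(n-1)/2}$, and such that for each $\sigma\in S_n$ one of the ratios $\overline\psi_{\sigma(i)}(\overline\psi_i\circ c)/\overline\psi_{\sigma(j)}(\overline\psi_j\circ c)$ is non-trivial. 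The simplest admissible choice $\overline\psi_1=\dots=\overline\psi_{n-1}=\mathbf 1$, $\overline\psi_n=\overline\chi_{cyc}^{-n(n-1)/2}$ already does the job once one checks $\overline\chi_{cyc}^{n(n-1)/2}\neq\mathbf 1$ on $G_F$, which holds for $F=\rationals(\mu_{d_n})$ and $\ell\equiv 1\pmod{d_n}$ because $\ell-1>n(n-1)$ and $\overline\chi_{cyc}$ restricted to $G_F$ is still surjective onto $\mathbb{F}_\ell^\times$. In summary: your approach trades the paper's reliance on a concrete rigidity input (simplicity of $\PSL_2(\mathbb{F}_7)$) for a more hands-on but also more flexible construction of $\overline\rho$ and $F^{avoid}$; both are valid.
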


\begin{proof} 
	Let $\ell \equiv 1 \pmod{504}$ be a sufficiently large prime and $G = \PSL_2(\mathbb{F}_7)$.
	There is an irreducible non-self-dual representation $r : G \to \GL_3(\mathbb{F}_\ell)$. Let $L$ be the splitting field of $x^7 - 154 x + 99$.
	We have $\Gal(L/\rationals) \cong G$ by \cite{erbach79}.
	Consider $F = \rationals(\mu_{504 \ell})$ and  $\overline{\rho} : G_{F} \to \GL_3(\mathbb{F}_\ell)$ the restriction of $r$ and $F^{avoid} = \overline{\rationals}^{\ker \overline{\rho}}$. Now apply the previous corollary to $\overline{\rho}, \overline{\rho} \oplus \mathbf{1}$ and $ \overline{\rho} \oplus \mathbf{1} \oplus \mathbf{1}$. The resulting representation $\Pi$ cannot be essentially conjugate self-dual since that would imply the existence of a character 
	$\overline{\chi} : G_{F'} \to \overline{\mathbb{F}}_\ell^\times$ such that
	$\overline{\rho}^{\vee} \cong \overline{\rho} \otimes \overline{\chi}$.
	In particular, $\overline{\chi}$ must factor through the simple group $G$ and is trivial. Absurd.
	Finally, the claim follows from theorem \ref{varma_thm}.
\end{proof}

	\subsection{Local-Global Compatibility}
		In this section we explain the consequences of the potential automorphy theorem for the compatibility of local and global Langlands correspondences.
		Fix an isomorphism $\iota : \overline{\rationals}_\ell \cong \complex$. First we need a variant of \cite[Thm 6.1.2]{10author}. 
		
	\begin{lemma} \label{lifting_level_lemma}
		Let $F$ be an imaginary CM or totally real field, let $c \in \Aut(F)$ be complex conjugation, and let $p$ be a prime. Let $w \nmid p$ be a place of $F$ and $K_w \subset \GL_n(F_{w})$ an open compact subgroup containing the Iwahori subgroup. If $\rho : G_{F} \to \GL_n(\overline{\rationals}_\ell)$ is a continuous representation satisfying the following conditions:
		\begin{enumerate}[(1)]
			\item $\rho$ is unramified almost everywhere.
			\item For each place $v \mid \ell$ of $F$, the representation $\rho\rvert_{G_{F_v}}$ is potentially semi\-stable, ordinary with regular Hodge--Tate weights.
			\item $\overline{\rho}$ is absolutely irreducible and decomposed generic \cite[4.3.1]{10author}. The image of $\overline{\rho} \rvert_{F(\zeta_\ell)}$ is enormous \cite[6.2.28]{10author}.
			\item There exist $\sigma \in G_{F} \setminus G_{F(\zeta_\ell)}$ such that $\overline{\rho}(\sigma)$ is a scalar. We have $\ell > n$.
			\item There exists a regular algebraic cuspidal automorphic representation $\pi$ of $\GL_n(\mathbb{A}_F)$ and an isomorphism $\iota : \overline{\rationals}_\ell \to \complex$ such that $\pi$ is $\iota$-ordinary and $\overline{r_{\iota}(\pi)} \cong \overline{\rho}$.
			\item The restriction $\overline{\rho} \rvert_{G_{F_w}}$ is trivial and $q_w \equiv 1 \pmod{\ell}$.
			\item The space of invariants $\pi_w^{K_w}$ contains a non-zero vector and the monodromy operators of $r_{\iota}(\pi) \rvert_{G_{F_w}}$ and $\rho \rvert_{G_{F_w}}$ are conjugate to each other. 
		\end{enumerate}
		Then $\rho$ is ordinarily automorphic of weight $\iota \lambda$: there exists an $\iota$-ordinary cuspidal automorphic representation $\Pi$ of $\GL_n(\mathbb{A}_F)$ of weight $\iota \lambda$ such that $\rho \cong r_{\iota}(\Pi)$ and $\Pi_w^{K_w} \neq 0$. Moreover, if $v \nmid p$ is a finite place of $F$ and both $\rho$ and $\pi$ are unramified at $v$, then $\Pi_v$ is unramified. 
	\end{lemma}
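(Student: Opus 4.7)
The plan is to follow closely the proof of \cite[Thm 6.1.2]{10author} (the automorphy lifting theorem used in the proof of Theorem \ref{potential_automorphy_thm}), with additional care to track the level at the auxiliary place $w$. First I would set up a global deformation problem $\mathcal{S}$ for $\overline{\rho}$. At places above $\ell$, I impose the ordinary crystalline deformation condition with Hodge--Tate weights matching those of $\rho$ (as in hypothesis (2)). At places $v \nmid \ell$ where $\rho$ and $\pi$ are both unramified, I use the unramified deformation condition. At $w$, I use a deformation condition that cuts out the irreducible component of the framed local deformation ring $R_w^{\square}$ containing $\rho\rvert_{G_{F_w}}$; since $\overline{\rho}\rvert_{G_{F_w}}$ is trivial and $q_w \equiv 1 \pmod{\ell}$, this ring is well understood by the work of Shotton, and its irreducible components are parametrised by the Jordan type of the monodromy operator.

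Second, by hypothesis (7), the monodromy operators of $r_{\iota}(\pi)\rvert_{G_{F_w}}$ and $\rho\rvert_{G_{F_w}}$ are conjugate, so both representations lie on the same irreducible component of $R_w^{\square}$ after extending scalars. Meanwhile, hypotheses (3), (4), (5) (irreducibility of $\overline{\rho}$, decomposed generic, enormous image over $F(\zeta_\ell)$, existence of $\sigma$ with scalar image, $\ell > n$, and residual automorphy via $\pi$) are exactly what is required to run the Taylor--Wiles--Kisin patching argument of \cite[\S 6]{10author}. Applying this argument gives an $R = \mathbb{T}$ statement for the deformation problem $\mathcal{S}$, from which we conclude that $\rho$ is automorphic: $\rho \cong r_{\iota}(\Pi)$ for some $\iota$-ordinary cuspidal automorphic representation $\Pi$ of $\GL_n(\mathbb{A}_F)$ of the prescribed weight.

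Third, the local component of $\Pi$ at $w$ is controlled by the component of $R_w^{\square}$ on which we landed: because we constrained the local deformation at $w$ to have the same monodromy Jordan type as $\rho\rvert_{G_{F_w}}$, and because $\pi$ itself lies on this component (matching monodromy and $K_w$-invariants), the Weil--Deligne parameter of $\Pi_w$ has the same monodromy. Since $K_w$ contains the Iwahori and $\overline{\rho}\rvert_{G_{F_w}}$ is trivial with $q_w \equiv 1 \pmod \ell$, both $\pi_w$ and $\Pi_w$ lie in the Iwahori-spherical block with the same monodromy operator, so $\Pi_w^{K_w} \neq 0$ follows from Borel's theorem together with the classification of Iwahori-spherical representations by their Satake-like parameters and monodromy. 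Unramifiedness of $\Pi_v$ at places $v \nmid p$ where both $\pi$ and $\rho$ are unramified follows because we imposed the unramified deformation condition there.

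The main obstacle will be the local step at $w$: one must verify cleanly that specifying the component of $R_w^{\square}$ (equivalently, the monodromy Jordan type) forces the automorphic $\Pi_w$ produced by patching to admit $K_w$-fixed vectors. In the tame setting (here $w \nmid \ell$ and the residual representation is trivial with $q_w \equiv 1 \pmod \ell$), this can be handled by combining Shotton's description of the irreducible components of $R_w^{\square}$ with the type-theoretic translation between Iwahori-spherical representations with a prescribed monodromy operator and their Langlands parameters; the match is compatible with the construction of $r_\iota(\Pi)$ (local-global compatibility up to semisimplification is already known by Varma \cite{varma_local_global}, and the monodromy type is determined by the component).
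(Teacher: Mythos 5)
Your proposal diverges from the paper's argument in a way that leaves a real gap precisely at the crux of the lemma, which is the conclusion $\Pi_w^{K_w}\neq 0$.

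The paper does not cut out a single component of $R_w^{\square}$ via a deformation condition and then run an $R=\mathbb{T}$ argument; in the Calegari--Geraghty/ten-author framework there is no clean $R=\mathbb{T}$ theorem to invoke, and replacing the deformation problem at $w$ by a single irreducible component is not obviously a legitimate local deformation problem (one would have to check liftability, formal smoothness, etc., which the paper avoids entirely). Instead, the paper works with the \emph{full} patched complex $\mathcal{C}_\infty$ and its $K_w$-level analogue $\mathcal{C}_\infty(K_w)$, together with the Hecke algebras $T_\infty$ and $T_\infty(K_w)$ inside $\Spec R_\infty$. It applies the key support lemma of Calegari--Geraghty \cite[Lemma 6.2]{calegari_geraghty_beyond} at the point $x$ corresponding to $r_\iota(\pi)$ to find a maximal-dimensional generic point $x_1\in\Spec T_\infty(K_w)\subset\Spec R_\infty$. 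Then it uses the description of the irreducible components of maximal dimension of $\Spec R_\infty$ (indexed by tuples of nilpotent conjugacy classes at each ramified place, via \cite[Prop 3.1]{taylor_ladic_part2} and \cite[Lemma 6.2.26]{10author}), together with hypothesis~(7), to conclude that the point $y$ corresponding to $\rho$ lies in the closure of $x_1$, hence in $\Spec T_\infty(K_w)$. When there is ramification at more than one place the paper further introduces a twisted ordinary deformation datum $\mathcal{S}_\chi^{\mathrm{ord}}$ with pairwise-distinct characters at the auxiliary places to isolate the monodromy type at $w$ alone; your proposal has no counterpart to this step.

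The specific gap in your proof: you assert that once the deformation is constrained to the right component of $R_w^\square$ and $\pi$ lies on it, "the Weil--Deligne parameter of $\Pi_w$ has the same monodromy" and then invoke Borel's theorem. But the patching output is a priori a Hecke eigensystem, not a representation whose local component at $w$ you control. To know the monodromy of $\Pi_w$ (and hence that $\Pi_w^{K_w}\neq 0$ via Lemma \ref{monodromy_bound_level_lemma}(a)) you would need exactly the kind of statement the lemma is designed to produce, i.e.\ that the eigensystem already appears in cohomology at level $K_w$. Borel's theorem gives Iwahori-fixed vectors, not $K_w$-fixed vectors for a parahoric $K_w$ properly containing the Iwahori; and Varma's theorem only gives local-global compatibility up to semisimplification, which does not pin down the monodromy. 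This is precisely why the paper argues at the level of the patched complex $\mathcal{C}_\infty(K_w)$ and the geometry of $\Spec R_\infty$ rather than trying to read off $\Pi_w$ from a deformation-theoretic constraint.
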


	\begin{proof}
		Given \cite[Thm 6.1.2]{10author}, the only new thing we need to prove is that $\Pi_w^{K_w} \neq 0$. As in \cite[\S 6.6]{10author}, after a solvable base change which is totally split at $w$, it suffices to prove this under the additional conditions listed in \cite[\S 6.6.1]{10author}. 
		
		Consider the patched homology complex $\mathcal{C}_\infty$ of $S_\infty$-modules used in the proof of theorem \cite[Thm 6.6.2]{10author} and the $S_\infty$-algebras $R_\infty$ and $T_\infty \subset \End_{\mathbf{D}(S_\infty)}(\mathcal{C}_\infty)$. By replacing the Iwahori subgroup in the level subgroups defining $\mathcal{C}_\infty$ with the group $K_w$ we obtain a similar bounded complex of $S_\infty$-modules  $\mathcal{C}_\infty(K_w)$ and an $S_\infty$-algebra
		$T_\infty(K_w) \subset \End_{\mathbf{D}(S_\infty)}(\mathcal{C}_\infty(K_w))$.
		Moreover, there is a natural morphism of complexes $\mathcal{C}_\infty \to \mathcal{C}_\infty(K_w)$ and a compatible map of $S_\infty$-algebras $T_\infty \to T_\infty(K_w)$.
		
		The Galois representation $\rho$ defines a point $y$ of $\Spec R_\infty$. Denote its image in $\Spec S_\infty$ by $\mathfrak{p}$. We wish to show that 
		$y$ lies in the support of $H^\bullet( \mathcal{C}_\infty(K_w) \otimes_{S_\infty}^{\mathbb{L}} S_\infty/\mathfrak{p})[1/p]$, since then the lemma follows as in the proof of \cite[Thm 6.6.2]{10author}.
		Arguing as in the proof of \cite[Corollary 6.3.9]{10author}, we see that it suffices to show that $y$ lies in the support of $H^\bullet(\mathcal{C}_\infty(K_w))$ or equivalently that $y \in \Spec T_\infty(K_w) \subset \Spec R_\infty$.
		
 		Let $x \in \Spec R_\infty$ be the point corresponding to the Galois representation $r_\iota(\pi)$. 
		As in the beginning of the proof of \cite[prop 6.3.8]{10author}, the key lemma \cite[Lemma 6.2]{calegari_geraghty_beyond} implies that
		$\Supp_{S_{\infty,x}} H^\bullet(\mathcal{C}_{\infty}(K_w)_x)$ contains a point of codimension at most $l_0$. Let $x_1$ denote a preimage of such a point in $\Spec T_\infty(K_w)$. If we view $\Spec T_\infty(K_w)$ as a closed subset of $\Spec R_\infty$, then $x_1$ must be a generic point of $\Spec R_\infty$ of maximal dimension.
		
		Let $R$ be the set of places used in \cite[\S 6.6.1]{10author}. In particular, $w \in R$. The proofs of \cite[Prop 3.1]{taylor_ladic_part2} and \cite[Lemma 6.2.26]{10author} show that the irreducible components of maximal dimension of $\Spec R_\infty$ are in bijection with tuples $(N_v)_{v \in R}$ where $N_v$ is a conjugacy class of a $n \times n$ nilpotent matrix. Moreover, a point of $\Spec R_\infty$ corresponding to a Galois representation $\rho' : G_{F} \to \GL_n(\overline{\rationals}_\ell)$ lies on the irreducible component of maximal dimension indexed by $N_v = N(\rho' \rvert_{G_{F_v}})$.	
		
		If $R = \{w\}$, then we use assumption (7) to directly conclude that $y$ lies in the closure of $x_1$, hence $y \in \Spec T_\infty(K_w)$. If $\# R > 1$, then we consider the deformation datum $\mathcal{S}_\chi^{ord}$, where $\chi_{w,i} = 1$ and for $v \in R \setminus \{w\}$ we choose $\chi_{v,1}, \dots, \chi_{v, 1} : \mathcal{O}_{F_v}^\times \to \mathcal{O}^\times$ pairwise distinct characters which are trivial mod $\varpi$. Similarly to the proof of \cite[Thm 6.6.2]{10author}, this leads to a setup $R_\infty^{'}, \mathcal{C}_\infty^{'}(K_w), T_\infty^{'}(K_w)$ with an isomorphism $R_\infty'/\varpi \cong R_\infty/\varpi$ which is compatible with an isomorphism $\mathcal{C}_\infty'(K_w) \otimes^{\mathbb{L}}_{S_\infty} S_\infty/\varpi \cong \mathcal{C}_\infty(K_w) \otimes_{S_\infty}^{\mathbb L} S_\infty/\varpi$.
		Moreover, \cite[Lemma 6.3.7]{10author} applies to $R_\infty^{'}, \mathcal{C}_\infty^{'}(K_w), T_\infty^{'}(K_w)$. The irreducible components of $\Spec R'_\infty$ of maximal dimension are indexed by a single nilpotent conjugacy class corresponding to the monodromy at $w$.
		
		Let $y_1$ be the maximal dimension generic point of $\Spec R_\infty$ containing $y$. Choose generic points $\overline{x}_1$ and $\overline{y}_1$ of $\Spec R_\infty/(x_1, \varpi)$ and $\Spec R_\infty(y_1, \varpi)$. By assumption on the monodromy at $w$, we see that both these points lie on the irreducible component of $R_\infty'$ indexed by $N_w = N(\rho \rvert_{G_{F_w}})$, i.e. both $\overline{x}_1$ and $\overline{y}_1$ generalise to the same generic point $x' \in \Spec R_\infty'$. Finally, we deduce in exactly the same way as at the end of the proof of \cite[Prop 6.3.8]{10author} that $y_1 \in \Spec T_\infty(K_w)$, hence also $y \in \Spec T_\infty(K_w)$, as desired.
	\end{proof}
	
	\begin{lemma} \label{monodromy_bound_level_lemma}
		Let $K/\rationals_p$ be a finite extension with residue field $k$ and $\pi$ a generic (i.e. admits a Whittaker model) smooth irreducible representation of $\GL_n(K)$ such that
		$\pi^{\operatorname{Iw}} \neq 0$, where $\operatorname{Iw} = \operatorname{Iw}_n \subset \GL_n(K)$ is the Iwahori subgroup. Let $m_1 + \dots + m_s = n$ be a partition. Let $P < \GL_n$ be the standard parabolic subgroup corresponding to the conjugate partition $n_j = \# \{i : m_i \geq j\}$ and
		$U = \{g \in \GL_n(\mathcal{O}_K) : \overline{g} \in P(k)\} < \GL_n(K)$.
		\begin{enumerate}[(a)]
			\item If the monodromy operator of $\operatorname{rec}_K(\pi)$ has Jordan block sizes given by $m_1, \dots, m_s$, then $\pi^U \neq 0$.
			\item If $\pi^{U} \neq 0$, then the rank of the monodromy operator of $\operatorname{rec}_K(\pi)$ is at most $n - s$.
		\end{enumerate}
	\end{lemma}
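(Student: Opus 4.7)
The plan is to combine the Bernstein--Zelevinsky classification of generic representations with an Iwasawa--Mackey computation to obtain the explicit formula
\[
\dim \pi^U \;=\; \#\{(0,1)\text{-matrices with row sums } \vec{m}' \text{ and column sums } \vec{n}\},
\]
where $\vec{m}'$ is the Jordan partition of the monodromy of $\operatorname{rec}_K(\pi)$. From this formula both (a) and (b) will follow by elementary combinatorics of partitions.

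First, since $\pi$ is irreducible, generic, and has $\pi^{\operatorname{Iw}} \neq 0$, Zelevinsky's classification realises it as an irreducible full parabolic induction
\[
\pi \;\cong\; \Ind_{P_{\vec{m}'}}^{\GL_n(K)}\bigl(\St_{m_1'}(\chi_1) \boxtimes \cdots \boxtimes \St_{m_r'}(\chi_r)\bigr),
\]
for some partition $\vec{m}' = (m_1', \ldots, m_r')$ of $n$ and unramified characters $\chi_i$ of $K^\times$; in this realisation the monodromy operator of $\operatorname{rec}_K(\pi)$ has Jordan blocks of sizes $(m_1', \ldots, m_r')$, so $\operatorname{rank}(N) = n - r$.

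Using the Iwasawa decomposition $\GL_n(K) = P_{\vec{m}'}(K) \cdot \GL_n(\mathcal{O}_K)$, the first congruence subgroup $K_1 \subset \GL_n(\mathcal{O}_K)$ acts trivially on $\pi^{\operatorname{Iw}}$, and one identifies the $K_1$-invariants $\pi^{K_1}$ with the $\GL_n(k)$-module $\Ind_{P_{\vec{m}'}(k)}^{\GL_n(k)}(\overline{\St}_{m_1'} \boxtimes \cdots \boxtimes \overline{\St}_{m_r'})$, where $\overline{\St}_m$ denotes the Steinberg representation of the finite group $\GL_m(k)$. Since $U/K_1 = P(k)$, the space $\pi^U = (\pi^{K_1})^{P(k)}$ decomposes by Mackey's formula over the Bruhat double cosets $P(k) \backslash \GL_n(k) / P_{\vec{m}'}(k)$, which are in bijection with nonnegative integer matrices $A = (a_{ij})$ having row sums $\vec{m}'$ and column sums $\vec{n}$. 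For a Bruhat representative $w$ of the coset labelled by $A$, the image in the Levi $\prod_i \GL_{m_i'}(k)$ of the intersection $P_{\vec{m}'}(k) \cap w^{-1}P(k)w$ is the standard parabolic of $\prod_i \GL_{m_i'}(k)$ whose $i$th factor has block sizes equal to the nonzero entries of the $i$th row of $A$. An elementary Iwahori--Hecke algebra computation shows that $\overline{\St}_m$ has nonzero invariants under a standard parabolic $P_{\vec{b}} \subset \GL_m(k)$ if and only if $\vec{b} = (1, \ldots, 1)$, because the sign character of $S_m$ has no invariants under any nontrivial Young subgroup $S_{\vec{b}}$. Consequently, the coset labelled by $A$ contributes to $\pi^U$ precisely when $A$ is a $(0,1)$-matrix, and its contribution is one-dimensional, yielding the advertised formula.

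For part (a), with $\vec{m}' = \vec{m}$ (after arranging the parts in decreasing order), the ``staircase'' matrix $a_{ij} = 1 \iff j \leq m_i$ is a $(0,1)$-matrix with row sums $\vec{m}$ and column sums $\vec{n} = \vec{m}^{t}$, so $\pi^U \neq 0$. For part (b), the existence of any $(0,1)$-matrix with $r$ rows and column sums $\vec{n}$ forces every $n_j \leq r$; in particular $s = n_1 \leq r$, hence $\operatorname{rank}(N) = n - r \leq n - s$. The main technical obstacle is the Iwasawa--Mackey bookkeeping: verifying that the $K_1$-invariants of each $\St_{m_i'}(\chi_i)$ recover the finite-group Steinberg (where the unramifiedness of $\chi_i$ is essential) and correctly computing the \emph{image} in the finite Levi of each Bruhat intersection---as opposed to the Levi of the intersection itself, which is generally larger---so that the Borel-only invariance condition on $\overline{\St}_m$ translates cleanly into the $(0,1)$-entry constraint on $A$.
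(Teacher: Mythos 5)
Your proof is correct and structurally very close to the paper's, but organised through a genuinely different technical device: a depth-zero reduction. Both arguments begin from the same Zelevinsky classification of generic Iwahori-spherical representations as $\pi \cong \langle\Delta_1\rangle \times \cdots \times \langle\Delta_t\rangle$ (equivalently $\Ind_{P_{\vec m'}} \St_{m_1'}(\chi_1) \boxtimes \cdots \boxtimes \St_{m_r'}(\chi_r)$), and both decompose $\pi^U$ via Mackey theory over the same double cosets, which in each case are indexed by nonnegative integer matrices with prescribed row and column sums. The paper applies Bruhat--Iwasawa and Mackey directly at the level of the $p$-adic group, computing invariants of each $\langle\Delta_i\rangle$ under parahoric subgroups of $\GL_{r_i}(K)$ via Casselman's explicit description of the Iwahori-fixed line $\phi_{w_0}\complex$ and the observation that every simple reflection moves $\phi_{w_0}$. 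You instead pass to $K_1$-invariants first, identifying $\pi^{K_1}$ with a parabolic induction of finite Steinberg representations of $\GL_{m_i'}(k)$, and then apply finite-group Mackey together with the standard fact that $\overline{\St}_m$ has nonzero invariants under a standard parabolic of $\GL_m(k)$ only when that parabolic is the Borel. The payoff of your route is the clean exact formula $\dim\pi^U = \#\{(0,1)\text{-matrices with row sums }\vec m'\text{ and column sums }\vec n\}$, from which (a) follows by exhibiting the staircase matrix and (b) by the trivial inequality $s = n_1 \leq r$, bypassing the paper's slightly more involved contradiction argument in part (b). The cost is the extra verification that the $K_1$-invariants of the $p$-adic Steinberg $\St_m(\chi)$ with $\chi$ unramified recover the finite Steinberg $\overline{\St}_m$ (and not some larger module), a standard but non-trivial depth-zero fact that you correctly flag; the paper's direct $p$-adic argument avoids it. Both proofs establish the lemma; yours would benefit from a reference for $\St_m(\chi)^{K_1} \cong \overline{\St}_m$, but there is no gap in the logic.
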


	\begin{proof}
		As $\pi^{\operatorname{Iw}} \neq 0$, it follows from \cite[Proposition 2.6]{casselman80}
		that the supercuspidal support of $\pi$ consists of unramified characters of $K^\times$.
		
		Since $\pi$ is generic, \cite[Thm 9.7]{zelevinsky80} implies that there are segments $\Delta_1, \dots, \Delta_t$
		of lengths $r_i$ such that $\pi = \langle \Delta_1 \rangle \times \dots \times \langle \Delta_t \rangle$ and $\Delta_i, \Delta_j$ are not linked for $i \neq j$. See \cite{zelevinsky80} for the definitions of these terms.
		As explained in \cite[\S 4.4]{rodier82}, it follows from \cite[Thm VII.2.20]{harris_taylor} that
		\[
		\operatorname{rec}_{K}(\pi) = \bigoplus_{i = 1}^t \chi_i \otimes \Sp_{r_i} 
		\]
		for some unramified characters $\chi_i$. 
		
		Let $Q < \GL_n$ be the standard parabolic subgroup corresponding to the partition 
		$r_1 + \dots + r_t = n$. Consider the Bruhat 
		decomposition 
		\[
		\GL_n(K) = \bigcup_{w \in W_Q \backslash W / W_P } Q(K) w U,
		\]
		where $W$ is the Weyl group of $\GL_n$ and $W_Q$ and $W_P$ are defined as the Weyl groups of the Levi subgroups of $Q$ and $P$.
		By definition of normalised induction, we have
		\[
		(\langle \Delta_1 \rangle \times \dots \times \langle \Delta_t \rangle)^{\operatorname{U}} \cong \bigoplus_{w \in  W_Q \backslash W / W_P  } (\langle \Delta_1 \rangle \otimes \dots \otimes \langle \Delta_t \rangle)^{Q(K) \cap w U w^{-1}}.
		\]
		Now we can prove part (a). The assumption on the monodromy of $\operatorname{rec}_K(\pi)$ implies that the partition $r_1 + \dots + r_t = n$ is equivalent to $m_1 + \dots + m_s = n$.
		
		Hence, the choice of $P$ implies that there exists an element $w \in W$ such that $M \cap w P w^{-1} = M \cap B$, where $M$ is the Levi subgroup of $Q$ and $B$ is the group of upper triangular matrices. Namely, if we assume $m_1 \geq m_2 \geq \dots \geq m_s$, then we can take 
		$w(x) = j + \sum_{i < y} m_{i}$, where
		$x = y + \sum_{i < j} n_i$ and $y \leq n_j$.
		With this choice of $w$ we have $M(K) \cap w U w^{-1} = \operatorname{Iw}_{m_1} \times \dots \times \operatorname{Iw}_{m_s}$ and in particular, $\pi^U \neq 0$.
		
		For part (b) it suffices to show that $t \geq s$. The first proof of \cite[Proposition 2.3]{casselman80} implies that $\langle \Delta_i \rangle^{\operatorname{Iw}_{r_i}} \cong \phi_{w_0} \complex$, where $w_0$ is the longest Weyl group element of $\GL_{r_i}$ and $\phi_{w_0}$ is the function defined in \cite[\S 2]{casselman80}. Any simple reflection acts non-trivially on $\phi_{w_0}$. In particular, if $U < \GL_{r_i}(K)$ is a parahoric subgroup such that $\langle \Delta_i \rangle^U \neq 0$, then $U$ is conjugate to $\operatorname{Iw}_{r_i}$.

		Hence, $(\langle \Delta_1 \rangle \times \dots \times \langle \Delta_t \rangle)^{\operatorname{U}} \neq 0$ implies that 
		$M(K) \cap w U w^{-1}$ equals $\operatorname{Iw}_{r_1} \times \dots \times \operatorname{Iw}_{r_t}$ for some choice of $w$. 
		If $t < s$, then the first block of $P$, which is of size $s$, is larger than the number of factors of $M$. Thus, for any $w$, the group $P \cap w M w^{-1}$ has a quotient isomorphic to $\GL_{j}$ for some $j > 1$. In particular we cannot have $M \cap B = M \cap w P w^{-1}$ and $M(K) \cap w U w^{-1} = \operatorname{Iw}_{r_1} \times \dots \times \operatorname{Iw}_{r_s}$.
	\end{proof}
	
	\begin{theorem} \label{local_global_thm}
		Let $n \in \{3,4,5\}$ and $d_3 = 9$, $d_4 = 18$, $d_5 = 168$.
		There exists an integer $C > 0$ with the following property.
		Given the following objects:
		\begin{itemize}
			\item a CM field $F$ containing $\mu_{d_n}$,
			\item a prime $\ell$ such that $\ell \nmid C$ and $\ell \equiv 1 \pmod{d_n}$,
			\item a cuspidal cohomological automorphic representation $\Pi$ of $\GL_n(\mathbb{A}_F)$ which is $\iota$-ordinary at all places above $\ell$,
		\end{itemize}
		such that
		\begin{itemize}
			\item $\overline{r_{\iota}(\Pi)}(G_{F(\mu_\ell)})$ is enormous \cite[6.2.28]{10author} and $\overline{r_{\iota}(\Pi)}$ is absolutely irreducible and decomposed generic \cite[4.3.1]{10author},
			\item there exists $\sigma \in G_{F} \setminus G_{F(\mu_\ell)}$ such that $\overline{r_{\iota}(\Pi)}(\sigma)$ is a scalar,
			\item there exists $\gamma \in \GL_n(\overline{\mathbb{F}}_\ell)$ such that $\gamma \overline{r_\iota(\Pi)}(G_{F}) \gamma^{-1} \subset \GL_n(\mathbb{F}_\ell)$,
			\item $\det \overline{r_{\iota}(\Pi)} = \overline{\chi}_{cyc}^{-n(n-1)/2}$,
		\end{itemize}
		we have
		\[
			WD(r_{\iota}(\Pi) \rvert_{G_{F_v}})^{F-ss} \cong \operatorname{rec}_{F_v}(\Pi_v |\det |^{(1-n)/2}),
		\]
		for all places $v \nmid \ell$.
	\end{theorem}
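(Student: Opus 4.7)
The plan is to follow the Allen--Newton strategy \cite{allen_newton}: apply Theorem \ref{potential_automorphy_thm} to produce an auxiliary automorphic representation $\Pi^{PA}$ realizing the Galois monodromy of $\rho = r_\iota(\Pi)$ at $v$, use the level-preserving automorphy lifting Lemma \ref{lifting_level_lemma} to transfer this information back to $\Pi_v$, and conclude via a sharpening of Lemma \ref{monodromy_bound_level_lemma}(b). Fix a place $v \nmid \ell$. By Varma's theorem \ref{varma_thm} the Weil--Deligne semisimplifications of $WD(\rho|_{G_{F_v}})$ and $\operatorname{rec}_{F_v}(\Pi_v |\det|^{(1-n)/2})$ agree, so it suffices to show $WD(\rho|_{G_{F_v}})^{F-ss}$ is generic; equivalently, the monodromy $N$ of $\rho|_{G_{F_v}}$ and the monodromy $N_0$ of $\operatorname{rec}_{F_v}(\Pi_v |\det|^{(1-n)/2})$ have the same Jordan type. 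Since $\Pi$ is cuspidal, $\Pi_v$ is generic, so by Lemma \ref{generic_ineq_lemma} $N_0$ is the unique (up to conjugation) maximal nilpotent compatible with the common semisimple part, and $N$ lies in the closure of the $N_0$-orbit --- that is, the Jordan type of $N$ is dominated by that of $N_0$.

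Let $\lambda$ be the Jordan type of $N$. By Proposition \ref{computed_motives_prop}, Table \ref{special_hypergeos} contains a hypergeometric parameter realizing $\lambda$ for $n \in \{3,4,5\}$. Apply Theorem \ref{potential_automorphy_thm} to this parameter, the prime $p$ below $v$, and an $F^{avoid}$ enlarged so that: $\overline{\rho}|_{G_{F'_w}}$ is trivial and $q_w \equiv 1 \pmod{\ell}$ for every place $w \mid v$ of $F' = F F_2$, and $F_2/\rationals$ is solvable (arrangements compatible with the Moret--Bailly step in the proof). This yields an $\iota$-ordinary cuspidal $\Pi^{PA}$ on $\GL_n(\mathbb{A}_{F'})$ with $\overline{r_\iota(\Pi^{PA})} \cong \overline{\rho}|_{G_{F'}}$ and, at each $w \mid v$, Weil--Deligne representation $WD(r_\iota(\Pi^{PA})|_{G_{F'_w}})^{F-ss}$ generic with monodromy Jordan type $\lambda$. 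The ``if and only if'' part of Varma's theorem then gives $\operatorname{rec}_{F'_w}(\Pi^{PA}_w |\det|^{(1-n)/2})$ monodromy Jordan type $\lambda$, and Lemma \ref{monodromy_bound_level_lemma}(a) yields $(\Pi^{PA})_w^{U_\lambda} \neq 0$ for the parahoric $U_\lambda$ attached to $\lambda$. Now apply Lemma \ref{lifting_level_lemma} to $\rho|_{G_{F'}}$ with $\pi = \Pi^{PA}$, $w$ a place above $v$, and $K_w = U_\lambda$: hypothesis (7) holds because $r_\iota(\Pi^{PA})|_{G_{F'_w}}$ and $\rho|_{G_{F'_w}}$ both have monodromy Jordan type $\lambda$ (the Jordan type is preserved under restriction to a finite-index subgroup of $W_{F_v}$), hence conjugate operators. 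The lemma produces a cuspidal $\Pi''$ on $\GL_n(\mathbb{A}_{F'})$ with $r_\iota(\Pi'') \cong \rho|_{G_{F'}}$ and $\Pi''_w^{U_\lambda} \neq 0$. By solvable cyclic base change and strong multiplicity one, $\Pi'' \cong \Pi|_{F'}$; since $w/v$ can be arranged unramified in the choice of $F_2$, parahoric level invariants descend to $\Pi_v^{U_\lambda} \neq 0$.

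The proof is completed by a strengthening of Lemma \ref{monodromy_bound_level_lemma}(b): the Bruhat-decomposition argument in its proof, refined by tracking the full partition $(r_1,\dots,r_t)$ of the Levi blocks of $Q$ rather than only the number $t$, shows that $\pi^{U_\mu} \neq 0$ forces $\mu$ to dominate the Jordan type of the monodromy of $\operatorname{rec}_K(\pi)$. Concretely, $V^{M(K) \cap wUw^{-1}}$ is non-zero only when the blocks of the parabolic $P$ can, after conjugation by $w$, interleave with the Levi blocks of $M$ so as to cut out the full Borel of each $\GL_{r_i}$-factor --- a combinatorial condition equivalent to $\mu$ dominating $(r_i)$. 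Applied to $\Pi_v^{U_\lambda} \neq 0$, this yields $\lambda \geq$ Jordan type of $N_0$ in dominance; combined with the opposite dominance inequality already noted, we conclude equality of Jordan types, so $N \sim N_0$ and $WD(\rho|_{G_{F_v}})^{F-ss}$ is generic. The principal technical obstacle is the dominance-order strengthening of Lemma \ref{monodromy_bound_level_lemma}(b): the literal rank bound in (b) already suffices when $n = 3$ (where rank uniquely determines Jordan type), but distinguishing partitions of equal rank such as $(2,2)$ versus $(3,1)$ for $n = 4$ (and the analogous collisions for $n = 5$) genuinely requires the sharper dominance statement.
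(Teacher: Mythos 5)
Your proposal follows the paper's strategy closely: produce $\Pi^{PA}=\Pi'$ via the potential automorphy theorem, transfer the parahoric level via Lemma \ref{lifting_level_lemma}, and bound the rec monodromy via Lemma \ref{monodromy_bound_level_lemma}. Where you deviate substantively is in noticing that Lemma \ref{monodromy_bound_level_lemma}(b), taken literally, gives only the rank bound $\operatorname{rank}(N_{\mathrm{rec}}) \leq n-s$, and that this --- combined with the one-sided dominance $N_{\mathrm{Gal}} \leq N_{\mathrm{rec}}$ from Varma and Lemma \ref{generic_ineq_lemma} --- pins down only the rank, not the Jordan type. This observation is correct: for $n\geq 4$ there are distinct partitions of equal rank on which dominance is strict (your example $(2,2)\leq(3,1)$, and $(3,1,1)\leq(2,2,1)$ or $(4,1)\leq(3,2)$ for $n=5$), so rank equality alone does not close the argument. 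The paper's proof at this point deduces ``local-global compatibility holds for $\Pi''$'' from the rank bound, which reads as relying on exactly the insufficient implication you flag.

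Your proposed fix --- strengthening (b) to a dominance bound --- is valid and, importantly, is already latent in the paper's own proof of (b). That proof reduces $\pi^{U_\mu}\neq 0$ to the existence of $w$ with $M(K)\cap wU_\mu w^{-1}$ equal to the Iwahori of $M$, i.e. to $M\cap wP_\mu w^{-1}$ being a Borel of $M$. Double cosets $W_Q\backslash W/W_{P_\mu}$ are parametrized by contingency tables with row sums $(r_1,\dots,r_t)=\nu$ and column sums $\mu^*$, and the Levi of $M\cap wP_\mu w^{-1}$ is a torus precisely when the table is $0$--$1$. By Gale--Ryser such a $0$--$1$ table exists iff $\mu^*\leq\nu^*$, i.e. $\mu\geq\nu$ in dominance. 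Combined with $\nu\geq\mu$ from Varma/genericity, this forces $\mu=\nu$ and hence genericity. So your strengthening is exactly what is needed, and it is a genuine sharpening relative to the proof as written.

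Two smaller wrinkles in your sketch. First, the potential automorphy input needs $\overline{\rho}$ to be conjugated into $\GL_n(\mathbb{F}_\ell)$ (this is why the theorem imposes the $\gamma$-hypothesis); you should conjugate first. Second, your final descent step --- identifying $\Pi''$ with the base change of $\Pi$ to $F'$ by strong multiplicity one and then descending parahoric invariants to $\Pi_v$ --- implicitly requires $F'/F$ to be solvable, which the Moret--Bailly construction does not guarantee. The paper avoids this by staying at $F'$: once $WD(r_\iota(\Pi'')|_{G_{F'_v}}) = WD(r_\iota(\Pi)|_{G_{F'_v}})$ is shown to be generic, genericity over $F'_v$ formally implies genericity over $F_v$ (a nonzero $\Hom((\rho,V,N),(\rho(1),V,N))$ over $G_{F_v}$ would restrict to a nonzero one over $G_{F'_v}$). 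You should finish that way rather than by base-changing back down.
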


	\begin{proof}
		Let $v \nmid \ell$ be a place of $F$ dividing $p$. With the theorem of Varma (theorem \ref{varma_thm}), it suffices to prove that $WD(r_{\iota}(\Pi) \rvert_{G_{F_v}})^{F-ss}$ is generic. Let $N$ denote the monodromy operator of $WD(r_{\iota}(\Pi) \rvert_{G_{F_v}})^{F-ss}$.
		
		Let $F^{avoid}$ be the normal closure of $\overline{F}^{\ker \overline{r_\iota(\Pi)}}(\mu_\ell)$ over $\rationals$.
		We apply Corollary \ref{potential_automorphy_cor} to find a finite totally real extension $F_2/\rationals$ linearly disjoint from $F^{avoid}$ over $\rationals$ such that for $F' = F F_2$ we have a cuspidal automorphic representation $\Pi'$ of $\GL_n(\mathbb{A}_{F'})$ such that 
		\[
			\overline{r_{\iota}(\Pi)} \rvert_{G_{F'}} = \overline{r_{\iota}(\Pi')}
		\] 
		and the Weil--Deligne representation
		\[
			WD(r_{\iota}(\Pi') \rvert_{G_{F'_v}} )^{F-ss} 
		\]
		is generic with monodromy operator conjugate to $N$ for all places $v \mid p$ of $F'$. 
		
		By \cite[Lemma 7.1.7]{10author}, $\overline{r_{\iota}(\Pi)} \rvert_{G_{F'}}$ is decomposed generic and absolutely irreducible. 
		Moreover, since $F'(\mu_\ell)$  is linearly disjoint from $F^{avoid}$ over $F(\mu_\ell)$, we find that $\overline{r_{\iota}(\Pi)}(G_{F'(\mu_\ell)}) = \overline{r_{\iota}(\Pi)}(G_{F(\mu_\ell)})$ is enormous.
		After a cyclic base change we may moreover assume that $\Pi'_v$ has Iwahori fixed vectors by \cite[5.2.3 and 5.2.4]{acampo_rigidity}, that $q_v \equiv 1 \pmod{\ell}$ and that $\overline{r_{\iota}(\Pi)} \rvert_{G_{F_v}}$ is trivial.
		
		Theorem \ref{varma_thm} implies that 
		\[
		\operatorname{rec}_{F'_v}(\Pi'_v |\det|^{(1-n)/2}) = WD(r_{\iota}(\Pi') \rvert_{G_{F'_v}} )^{F-ss}.
		\]
		Hence, part (a) of lemma \ref{monodromy_bound_level_lemma} implies that  $(\Pi'_v)^{K_v} \neq 0$, where $K_v = \{k \in \GL_n(\mathcal{O}_{F_v'}) : \overline{k} \in P(k(v))\}$ and $P \subset \GL_n$ is the standard parabolic subgroup corresponding to the conjugate partition of the Jordan block sizes of $N$.
		
		By lemma \ref{lifting_level_lemma}, there exists another automorphic representation $\Pi''$ of $\GL_n(\mathbb{A}_{F'})$ such that 
		\[
			r_{\iota}(\Pi'') = r_{\iota}(\Pi) \rvert_{G_{F'}}
		\]
		and $(\Pi''_v)^{K_v} \neq 0$. Part (b) of lemma \ref{monodromy_bound_level_lemma} together with the main theorem of \cite{varma_local_global} implies that the rank of the monodromy operator of the Weil--Deligne representation $WD(r_\iota(\Pi'')\rvert_{G_{F_v}})$ is as big as possible and that local-global compatibility holds for $\Pi''$. It follows from theorem \ref{varma_thm} that $WD(r_{\iota}(\Pi'')) = WD(r_{\iota}(\Pi) \rvert_{G_{F'}})$ is generic. Thus, $WD(r_{\iota}(\Pi))$ is generic, too. 
	\end{proof}
	
	\section{Searching Hypergeometric Parameters}

	\subsection{The Computational Check} In this section we describe a few explicit numerical criteria which follow from the results stated in \S 2. We have implemented these in a straightforward python program (available at \url{https://github.com/LAC1213/dworkmotives}) to produce the hypergeometric parameters listed in table \ref{special_hypergeos}.
	
	\begin{num}
		Fix the following data.
		\begin{itemize}
			\item Positive integers $d > n$.
			\item A hypergeometric parameter $(\alpha_1, \dots, \alpha_n ; \beta_1, \dots, \beta_n)$ modulo $d$.
			\item A vector $\underline{c} \in (\ints/d\ints)^3$ such that $c_0 + c_1 + c_2 = 0$.
			\item A subgroup $U < (\ints/d\ints)^\times$.
		\end{itemize}
		Let $M = M(\alpha_1, \dots, \alpha_n ; \beta_1, \dots, \beta_n)$.
	\end{num}

	\begin{num}
		It follows from Proposition \ref{hodge_numbers_prop} that $M$ satisfies (R) if and only if for every $s \in (\ints/d\ints)^\times$, the following $n$ integers are pairwise distinct
		\[
			\sum_{i} [s \beta_j - s \alpha_i] - \sum_{i} [ s \beta_j - s \beta_i] \qquad j = 1, \dots, n.
		\] 
		Equivalently, they form a set of cardinality $n$. If this condition is satisfied we set $R(\alpha_i ; \beta_j) = \True$, otherwise $R(\alpha_i; \beta_j) = \False$. If $n$ is small and $d$ grows, then computing $R(\alpha_i ; \beta_j)$ takes $O(d)$ operations. 
	\end{num}

	\begin{num}
		We define $BM(\alpha_i ; \beta_j) = \True$ if the conditions of proposition \ref{big_monodromy_prop} are satisfied. For small $n$ this requires $O(d)$ operations to compute. 
	\end{num}
	
	\begin{num}
		We define $BM_{fin}(\alpha_i ; \beta_j ; U) = \True$ if additionally the conditions of proposition \ref{big_monodromy_finite_prop} are satisfied. Note that this requires $O(d)$ operations to compute. 
	\end{num}

	\begin{num}
		We define $D(\alpha_i ; \beta_j ; \underline{c}) = \True$ if the conditions of proposition \ref{determinant_nth_power_prop} are satisfied. To check for the existence of the $x_\epsilon \in \ints$, we do linear algebra in the vector space spanned by the functions in $E(d)$. Namely we have to invert the matrix given by the functions of a set of basis elements in $E(d)$. This costs $O(d^3)$ operations.
	\end{num}

	\begin{num}
		Let $p_1 \geq p_2 \geq \dots \geq p_r \geq 1$ be a partition of $n$. Here is a simple algorithm which finds hypergeometric parameters $(\alpha_i ; \beta_j)$ modulo $d$ such that the properties (BM), (R), (D) stated in proposition \ref{computed_motives_prop} are satisfied and the monodromy operator at $t = \infty$ has Jordan blocks sizes $p_1, \dots, p_r$.
	
		\begin{enumerate}
			\item Enumerate all possible hypergeometric parameters $(\alpha_i ; \beta_j)$ modulo $d$
			of the form
			\[
				(\underbrace{0 , \dots, 0}_{p_1}, \underbrace{\gamma_2, \dots, \gamma_2}_{p_2}, \dots, \underbrace{\gamma_r, \dots, \gamma_r}_{p_r} ; \beta_1, \dots, \beta_n).
			\]
			There are $O(d^{n + r - 1})$ of these.
			\item For each such $(\alpha_i ; \beta_j)$ compute 
			\[
				x = R(\alpha_i ; \beta_j) \wedge BM(\alpha_i ; \beta_j).
			\]
			\item If $x = \True$, then enumerate all possibilities for $\underline{c} \in (\ints/d\ints)^3$. If $D(\alpha_i ; \beta_j ; \underline{c})$ is satisfied for any of them, then output $\alpha_i ; \beta_j ; \underline{c}$.
		\end{enumerate}
	\end{num}
	
	\subsection{Results of the Program} We have run this algorithm for $d \leq 30$ and partitions of $n \in \{4,5,6\}$ which are not of the form $(2,1, \dots, 1)$ or $(1,1, \dots, 1)$. (For the excluded partitions one can simply use a family of motives with $d$ odd and partition $(n)$.) We have listed some of the resulting parameters in table \ref{special_hypergeos}.
	
	\begin{figure}[ht] 
		
		\begin{center}
			\begin{tabular}{ c | c | c | c | c | c }
				$n$ & $d$ & $\alpha_i$ & $\beta_j$ & $\underline{c}$ & $U < (\ints/d\ints)^\times$ \\ \hline
				$3$ & $9$ & $0, 0, 0$ & $1, 2, 6$ & $3, 7, 8$ & $ \{\pm 1\} $  \\
				
				$4$ & $9$ & $0, 0, 0, 0$ & $1, 2, 7, 8$ & $0, 0, 0$ & $ \{\pm 1\} $ \\
				$5$ & $9$ & $0, 0, 0, 0, 0$ & $1, 2, 3, 4, 8$ & $5, 5, 8$ & $\{\pm 1\}$ \\
				$6$ & $9$ & $0, 0, 0, 0, 0, 0$ & $1, 2, 3, 6, 7, 8$ & $0, 0, 0$ & $\{ \pm 1\}$ \\
				$4$ & $9$ & $0, 0, 1, 1$ & $2, 4, 6, 8$ & $0, 0, 0$ & $ \{\pm 1\} $\\
				$5$ & $14$ & $0, 0, 0, 1, 1$ & $2, 4, 7, 11, 13$ & $0, 0, 0$ & $ \{\pm 1\} $ \\
				$4$ & $15$ & $0, 0, 1, 1$ & $2, 6, 10, 14$ & $0, 0, 0$ & $ \{\pm 1\} $ \\
				$6$ & $15$ & $0, 0, 0, 1, 1, 1$ & $3, 5, 7, 9, 11, 13$ & $0, 0, 0$ & $ \{ \pm 1\}$ \\
				$4$ & $18$ & $0, 0, 0, 3$ & $4, 11, 16, 17$ & $1, 7, 10$ & $ \{\pm 1\} $\\
				$4$ & $20$ & $0, 0, 0, 2$ & $3, 4, 9, 16$ & $2, 19, 19$ & $ \{\pm 1\} $ \\
				$6$ & $20$ & $0, 0, 0, 0, 1, 1$ & $2, 4, 8, 10, 11, 17$ & $4, 18, 18$ & $ \{\pm 1\} $\\
				$6$ & $20$ & $0, 0, 0, 0, 0, 2$ & $3, 10, 12, 13, 16, 18$ & $1, 8, 11$ & $(\ints/20\ints)^\times$ \\
				$6$ & $20$ & $0, 0, 0, 0, 1, 3$ & $4, 10, 11, 13, 17, 19$ & $1, 3, 16$ & $ \{\pm 1\} $ \\
				$5$ & $21$ & $0, 0, 0, 0, 0$ & $1, 2, 4, 15, 20$ & $6, 17, 19$ & $ \{\pm 1\} $\\
				$5$ & $21$ & $0, 0, 0, 0, 1$ & $4, 10, 12, 18, 20$ & $1, 1, 19$ & $ \{\pm 1\} $\\
				$5$ & $22$ & $0, 0, 0, 1, 1$ & $2, 6, 11, 17, 21$ & $0, 0, 0$ & $ \{\pm 1\} $ \\
				$5$ & $24$ & $0, 0, 0, 1, 1$ & $2, 6, 12, 19, 23$ & $0, 0, 0$ & $ \{\pm 1\} $\\
				$5$ & $24$ & $0, 0, 1, 1, 7$ & $8, 12, 13, 17, 19$ & $0, 0, 0$ & $ \{\pm 1\} $\\
				$5$ & $24$ & $0, 0, 0, 2, 6$ & $7, 8, 12, 19, 22$ & $0,0,0$ & $ \{\pm 1, \pm 11\} $
			\end{tabular}
			
		\end{center}
		\caption{some special hypergeometric parameters}
		\label{special_hypergeos}
	\end{figure}

	\begin{num}
		Given a partition of $n \in \{4,5,6\}$ our program computed those values of $d \leq 30$ for which there exists a hypergeometric parameter satisfying (BM), (R), and (D). We display this data in table \ref{possible_d_table}. We omitted partitions of the form $(n), (2,1,\dots,1)$ or $(1,1,\dots,1)$ since for these one can use any odd value of $d$ as shown in \cite{qian_potential}. For the partitions $(2,2,2), (3,2,1), (2,2,1,1), (3,1,1,1)$ our program did not find any hypergeometric parameters satisfying (BM), (R), and (D).
	\end{num}
	
	\begin{figure}[ht]
		\begin{tabular}{ c | l | l }
			$n$ & $\{p_i\}$ & $d$ \\ \hline
			$4$ & $2,2$ & $9,12,15,20,21,24,27$ \\
			$4$ & $3,1$ & $18,20,24,28,30$ \\
			$5$ & $3,2$ & $12,14,16,18,22,24,26,28$ \\
			$5$ & $4,1$ & $18,21,24$ \\
			$5$ & $2,2,1$ & $18,24$ \\
			$5$ & $3,1,1$ & $24$ \\
			$6$ & $3,3$ & $15,20,21,24,30$ \\
			$6$ & $4,2$ & $20,24,28$ \\
			$6$ & $5,1$ & $20,24,30$ \\
			$6$ & $4,1,1$ & $20,24$
		\end{tabular}
		\caption{values of $d \leq 30$ for which the program found a hypergeometric parameter satisfying (BM), (R), (D)}
		\label{possible_d_table}
	\end{figure}
	
	\begin{num} Unfortunately, our algorithm above is very slow, hence we only ran it for $d \leq 30$. Moreover, for any fixed partition we have no theoretical understanding for which $d$ we can expect a suitable hypergeometric parameter to exist.
	Thus, further investigations are necessary. It is not clear if one has to use varieties beyond Dwork hypersurfaces to find all the families of motives satisfying (BM), (R), (UM).
	\end{num}
	
	\bibliographystyle{alpha}
	\bibliography{refs}

\newcommand{\etalchar}[1]{$^{#1}$}
\begin{thebibliography}{BLGHT11}

\bibitem[A'C23]{acampo_rigidity}
Lambert A'Campo.
\newblock {Rigidity of Automorphic Galois Representations Over CM Fields}.
\newblock {\em International Mathematics Research Notices}, 2024(6):4541--4623,
  05 2023.

\bibitem[ACC{\etalchar{+}}23]{10author}
Patrick~B. Allen, Frank Calegari, Ana Caraiani, Toby Gee, David Helm, Bao
  Le~Hung, James Newton, Peter Scholze, Richard Taylor, and Jack~A. Thorne.
\newblock Potential automorphy over {CM} fields.
\newblock {\em Ann. Math. (2)}, 197(3):897--1113, 2023.

\bibitem[All16]{allen_polarizable}
Patrick~B. Allen.
\newblock Deformations of polarized automorphic {G}alois representations and
  adjoint {S}elmer groups.
\newblock {\em Duke Math. J.}, 165(13):2407--2460, 2016.

\bibitem[AN20]{allen_newton}
Patrick~B. Allen and James Newton.
\newblock Monodromy for some rank two {G}alois representations over {CM}
  fields.
\newblock {\em Doc. Math.}, 25:2487--2506, 2020.

\bibitem[BH89]{beukers_heckman}
F.~Beukers and G.~Heckman.
\newblock Monodromy for the hypergeometric function {{\(_ nF_{n-1}\)}}.
\newblock {\em Invent. Math.}, 95(2):325--354, 1989.

\bibitem[BL10]{BL_odd}
Thomas Barnet-Lamb.
\newblock On the potential automorphy of certain odd-dimensional {Galois}
  representations.
\newblock {\em Compos. Math.}, 146(3):607--620, 2010.

\bibitem[BL13]{BL_even}
Thomas Barnet-Lamb.
\newblock Potential automorphy for certain {Galois} representations to
  {{\(\mathrm{GL}_{2n}\)}}.
\newblock {\em J. Reine Angew. Math.}, 674:1--41, 2013.

\bibitem[BLGHT11]{BLGHT}
Tom Barnet-Lamb, David Geraghty, Michael Harris, and Richard Taylor.
\newblock A family of {Calabi}-{Yau} varieties and potential automorphy. {II}.
\newblock {\em Publ. Res. Inst. Math. Sci.}, 47(1):29--98, 2011.

\bibitem[Cas80]{casselman80}
William Casselman.
\newblock The unramified principal series of p-adic groups. {I}: {The}
  spherical function.
\newblock {\em Compos. Math.}, 40:387--406, 1980.

\bibitem[CG18]{calegari_geraghty_beyond}
Frank Calegari and David Geraghty.
\newblock Modularity lifting beyond the {T}aylor-{W}iles method.
\newblock {\em Invent. Math.}, 211(1):297--433, 2018.

\bibitem[Del79]{deligne_valeurs}
Pierre Deligne.
\newblock Values of {{\(L\)}}-functions and periods of integrals.
\newblock Automorphic forms, representations and {L}-functions, {Proc}. {Symp}.
  {Pure} {Math}. {Am}. {Math}. {Soc}., {Corvallis}/{Oregon} 1977, {Proc}.
  {Symp}. {Pure} {Math}. 33, {No}. 2, 313-346 (1979)., 1979.

\bibitem[Del80]{deligne_weil2}
Pierre Deligne.
\newblock La conjecture de {Weil}. {II}.
\newblock {\em Publ. Math., Inst. Hautes {\'E}tud. Sci.}, 52:137--252, 1980.

\bibitem[DMOS82]{deligne_milne_ogus_shih}
Pierre Deligne, James~S. Milne, Arthur Ogus, and Kuang-yen Shih.
\newblock {\em Hodge cycles, motives, and {Shimura} varieties}, volume 900 of
  {\em Lect. Notes Math.}
\newblock Springer, Cham, 1982.

\bibitem[EFM79]{erbach79}
D.~W. Erbach, J.~Fischer, and J.~McKay.
\newblock Polynomials with {PSL}(2,7) as {Galois} group.
\newblock {\em J. Number Theory}, 11:69--75, 1979.

\bibitem[Fal88]{faltings88}
Gerd Faltings.
\newblock {{\(p\)}}-adic {Hodge} theory.
\newblock {\em J. Am. Math. Soc.}, 1(1):255--299, 1988.

\bibitem[Ger19]{geraghty19}
David Geraghty.
\newblock Modularity lifting theorems for ordinary {Galois} representations.
\newblock {\em Math. Ann.}, 373(3-4):1341--1427, 2019.

\bibitem[Gri69]{griffiths_periods}
Phillip~A. Griffiths.
\newblock On the periods of certain rational integrals. {I}, {II}.
\newblock {\em Ann. Math. (2)}, 90:460--495, 496--541, 1969.

\bibitem[HLTT16]{harris_lan_taylor_thorne}
Michael Harris, Kai-Wen Lan, Richard Taylor, and Jack Thorne.
\newblock On the rigid cohomology of certain {S}himura varieties.
\newblock {\em Res. Math. Sci.}, 3:Paper No. 37, 308, 2016.

\bibitem[HSBT10]{HSBT}
Michael Harris, Nick Shepherd-Barron, and Richard Taylor.
\newblock A family of {Calabi}-{Yau} varieties and potential automorphy.
\newblock {\em Ann. Math. (2)}, 171(2):779--813, 2010.

\bibitem[HT01]{harris_taylor}
Michael Harris and Richard Taylor.
\newblock {\em The geometry and cohomology of some simple {S}himura varieties},
  volume 151 of {\em Annals of Mathematics Studies}.
\newblock Princeton University Press, Princeton, NJ, 2001.
\newblock With an appendix by Vladimir G. Berkovich.

\bibitem[Kat88]{katz_gkm}
Nicholas~M. Katz.
\newblock {\em {G}auss sums, {K}loosterman sums, and monodromy groups}.
\newblock Princeton University Press, 1988.

\bibitem[Kat09]{katz_another_look}
Nicholas~M. Katz.
\newblock Another look at the {Dwork} family.
\newblock In {\em Algebra, arithmetic, and geometry. In honor of Y. I. Manin on
  the occasion of his 70th birthday. Vol. II}, pages 89--126. Boston, MA:
  Birkh{\"a}user, 2009.

\bibitem[Klo07]{kloosterman_fermat}
Remke Kloosterman.
\newblock The zeta function of monomial deformations of {Fermat} hypersurfaces.
\newblock {\em Algebra Number Theory}, 1(4):421--450, 2007.

\bibitem[KM74]{katz_messing}
Nicholas~M. Katz and William Messing.
\newblock Some consequences of the {Riemann} hypothesis for varieties over
  finite fields.
\newblock {\em Invent. Math.}, 23:73--77, 1974.

\bibitem[Mat24]{matsumoto2024potential}
Kojiro Matsumoto.
\newblock On the potential automorphy and the local-global compatibility for
  the monodromy operators at $p \neq l$ over cm fields, 2024.

\bibitem[Qia22]{qian_potential}
Lie Qian.
\newblock Potential automorphy for {$GL_n$}.
\newblock {\em Invent. Math.}, 2022.

\bibitem[Rod82]{rodier82}
Fran{\c{c}}ois Rodier.
\newblock Repr{\'e}sentations de {{\(\mathrm{GL}(n,k)\)}} o{\`u} {{\(k\)}} est
  un corps {{\(p\)}}-adique.
\newblock S{\'e}min. {Bourbaki}, 34e ann{\'e}e, {Vol}. 1981/82, {Exp}. {No}.
  587, {Asterisque} 92-93, 201-218 (1982)., 1982.

\bibitem[RRV22]{roberts_villegas}
David~P. Roberts and Fernando Rodriguez~Villegas.
\newblock Hypergeometric motives.
\newblock {\em Notices Am. Math. Soc.}, 69(6):914--929, 2022.

\bibitem[Sch12]{scholze_perfectoid}
Peter Scholze.
\newblock Perfectoid spaces.
\newblock {\em Publ. Math., Inst. Hautes {\'E}tud. Sci.}, 116:245--313, 2012.

\bibitem[Shi12]{shin12}
Sug~Woo Shin.
\newblock Automorphic {Plancherel} density theorem.
\newblock {\em Isr. J. Math.}, 192:83--120, 2012.

\bibitem[Tat79]{tate_background}
J.~Tate.
\newblock Number theoretic background.
\newblock Automorphic forms, representations and {L}-functions, {Proc}. {Symp}.
  {Pure} {Math}. {Am}. {Math}. {Soc}., {Corvallis}/{Oregon} 1977, {Proc}.
  {Symp}. {Pure} {Math}. 33, 2, 3-26 (1979)., 1979.

\bibitem[Tay08]{taylor_ladic_part2}
Richard Taylor.
\newblock Automorphy for some {{\(l\)}}-adic lifts of automorphic mod {{\(l\)}}
  {Galois} representations. {II}.
\newblock {\em Publ. Math., Inst. Hautes {\'E}tud. Sci.}, 108:183--239, 2008.

\bibitem[Var15]{varma_local_global}
Ila~Kapur Varma.
\newblock {\em On local-global compatibility for cuspidal regular algebraic
  automorphic representations of {GL}n}.
\newblock ProQuest LLC, Ann Arbor, MI, 2015.
\newblock Thesis (Ph.D.)--Princeton University.

\bibitem[Vog93]{vogan93}
David A.~jun. Vogan.
\newblock The local {Langlands} conjecture.
\newblock In {\em Representation theory of groups and algebras}, pages
  305--379. Providence, RI: American Mathematical Society, 1993.

\bibitem[Wei52]{weil_jacobi_sum}
Andr{\'e} Weil.
\newblock Jacobi sums as ``{Gr{\"o}{{\ss}}encharaktere}''.
\newblock {\em Trans. Am. Math. Soc.}, 73:487--495, 1952.

\bibitem[Yan21]{yang_monodromy}
Yuji Yang.
\newblock An ordinary rank-two case of local-global compatibility for
  automorphic representations of arbitrary weight over cm fields, 2021.

\bibitem[Zel80]{zelevinsky80}
A.~V. Zelevinsky.
\newblock Induced representations of reductive {{\(p\)}}-adic groups. ii: On
  irreducible representations of {{\(GL(n)\)}}.
\newblock {\em Ann. Sci. {\'E}c. Norm. Sup{\'e}r. (4)}, 13:165--210, 1980.

\end{thebibliography}
\end{document}